\title{Onsager--Machlup functional for multiple SLEs %Onsager-Machlup functional for multi-chordal and multi-radial SLE
}
\date{\today}
\newcolumntype{P}[1]{>{\centering\arraybackslash}p{#1}}
\setlist[enumerate]{topsep = 1ex, leftmargin=1cm, itemsep= -2pt}
\setlist[itemize]{topsep = 1ex, leftmargin=1cm, itemsep= -2pt}
\let\OLDthebibliography\thebibliography
\renewcommand\thebibliography[1]{
  \OLDthebibliography{#1}
  \setlength{\parskip}{1pt}
  \setlength{\itemsep}{2pt}
}
\newtheorem{thm}{Theorem}[section]
\newtheorem{lem}[thm]{Lemma}
\theoremstyle{definition}
\newtheorem{remark}[thm]{Remark}
\newcommand{\Schwarzian}{\mathcal{S}}
\numberwithin{equation}{section}
\global\long\def\ii{\mathfrak{i}}
\newcommand{\abs}[1]{\left\lvert #1 \right \rvert}
\newcommand{\mc}[1]{\mathcal{#1}}
\newcommand{\m}[1]{\mathbb{#1}}
\newcommand{\Rm}[1]{\expandafter\@slowromancap\romannumeral #1@}
\renewcommand\Re{\operatorname{Re}}
\def\K{K}
\def\SLE{\operatorname{SLE}}
\def\Mo{M\"obius }
\def\a{\alpha}
\def\b{\beta}
\def\g{\gamma}
\def\d{\delta}
\def\e{\epsilon}
\def\t{\theta}
\def\k{\kappa}
\def\s{\sigma}
\def\o{\omega}
\def\O{\Omega}
\def\SC{{\mc {SC}}}
\def\dd{\mathrm{d}}
\def\P{\m P}
\def\1{\mathbf{1}}
\author{Shuo Fan \thanks{\protect\url{shuofan.math@gmail.com} Tsinghua University, China; IHES, France} }
\begin{document}

\maketitle
\begin{abstract}
  Recently, Carfagnini and Wang established that the loop Loewner energy can be interpreted as the Onsager--Machlup functional for the SLE loop measure \cite{CW24}. In this paper, we first interpret the multi-chordal Loewner potential as an Onsager--Machlup functional for the multi-chordal SLE. Subsequently, we extend the conformal deformation formula to the multi-radial Loewner potential and derive the Onsager--Machlup functional for the multi-radial SLE.
\end{abstract}
\tableofcontents
\section{Introduction}
\subsection{Background and main results}
At the turn of the millennium, Oded Schramm introduced the Schramm--Loewner evolution (SLE), a one-parameter family of random planar fractal curves generated by Brownian motion through Loewner transform \cite{Sch00,RS05}.
The loop version of SLE was introduced and studied in \cite{werner_measure,Benoist_loop,zhan2020sleloop} and is also known as Malliavin--Kontsevich--Suhov measure \cite{KS07,bj_cft_loop}. When the parameter $\kappa \in (0,4]$, the SLE$_\kappa$ loop measure $\nu_\k$ is supported on the space of Jordan curves.
The loop Loewner energy was introduced in \cite{RW} as a M\"obius invariant function on the family of Jordan curves, which was later shown to be closely related to the geometry of universal Teichm\"uller space \cite{W2}, random conformal geometry \cite{VW1,VW2}, and hyperbolic renormalized volume \cite{bbvw}. 

The Onsager--Machlup functionals originate from \cite{MO53,OM53} and compare the probabilities of a diffusion process staying in two infinitesimal neighborhoods. Recently, \cite{CW24} showed that the Onsager--Machlup functional of the SLE loop is exactly a multiple of the Loewner energy. More precisely,
\begin{equation*}
    \lim_{\e \rightarrow 0} \frac{\nu_\k(O_\e(\g))}{\nu_\k(O_\e(\m S^1))}=\exp\left(\frac{c(\k)}{24}I^L_{\text{loop}}(\g)\right)
\end{equation*}
where $c(\k)=(6-\k)(3\k-8)/(2\k)$ is the central charge of SLE$_\kappa$, $\m S^1$ is the unit circle, $\g$ is an analytic Jordan curve, $O_\e(\g)$ denotes an admissible neighborhood of $\g$ with size $\e$ and $I^L_{\text{loop}}$ denotes the loop Loewner energy.  
The key ingredient of the proof is the conformal restriction covariance property, which uniquely determines the SLE loop measure \cite{KS07,bj_cft_loop,CG25_CR}.

In this work, we investigate the Onsager--Machlup functional for other variants of SLE.
We develop a comprehensive framework for interpreting  Loewner potential (defined later) as the Onsager--Machlup functional for SLEs. We consider the following cases:
\begin{itemize}
    \item Single chordal SLE (Theorem~\ref{OM for single chordal}),
    \item Multiple chordal SLE (Theorem~\ref{OM for multiple chordal}),
    \item  Chordal SLE with force points (Theorem~\ref{OM for chordal variants}),
    \item Radial SLE (Theorems~\ref{OM for single radial},~\ref{OM for radial variants},~\ref{OM for multiple radial}).
\end{itemize}

To state the result, let us first describe the setup, including the choices of $\s$-algebra, admissible neighborhoods, and SLE measures.

We choose first a reference configuration.
For that, we fix a simply connected bounded domain $D$ with an analytic boundary (e.g., the unit disk $\m D$), two non-negative integers $n_1$ and $n_2 $, $n_1$ interior points and $n_2$ boundary points $\bar x_0=(x_{0,j})_{j=1}^n$ with $n=n_1+n_2$.
In the multi-chordal case, we have $n_1=0$ and $n_2$ is even. Let $\mc X_0$ denote the space of the simple, disjoint, multi-chord $ \g$ in $D$ connecting the boundary points pairwise with an arbitrarily chosen link pattern $\a_0$, while the $\s$-algebra $\mc F_0$ is induced by the Hausdorff metric. In the multi-radial case, we have $n_1=1$ and $n_2 \ge 1$. Let $\mc X_0$ denote the space of the simple multi-arc $ \g$ in $D$ connecting the interior point to each boundary point, while each arc is disjoint from the others except at the target interior point. The $\s$-algebra $\mc F_0$ is induced by the Hausdorff metric. 

Once the reference configuration is chosen, for other link pattern $\a$, $n_1$ interior points and $n_2$ boundary points $\bar x$, we define the space of multicurves $\mc X = \mc X_{\a, \bar x}$ and corresponding $\sigma$-algebra $\mc F = \mc F_{\a, \bar x}$ similarly. 

For admissible neighborhoods, we first fix the reference element $\g_0 \in \mc X_0$ to be the unique minimizer of the Loewner potential in $\mc X_0$ defined in \eqref{eq single chordal potential}\eqref{eq multiple chordal potential}\eqref{eq forced chordal potential}\eqref{eq single radial potential}\eqref{eq forced radial potential}\eqref{eq multiple radial potential} and a decreasing family of neighborhoods $(A_\e)_{\e >0}$ of $\g_0$ in $D$. Then for any element $\g \in \mc X$ that is locally conformally equivalent to $\g_0$ in $D$, which means that there exists a conformal map $f:A_{\e_0} \rightarrow \tilde A_{\e_0}$ for some $\e_0$ such that $\g=f(\g_0)$ and define $\tilde A_\e=f(A_\e)$ for $\e < \e_0$. 
We require that those neighborhoods $A_\e$ and $\tilde A_\e$ be subsets of $D$ and coincide with the regular boundary of $D$ near the boundary points and contain the interior points.
Let us define the admissible neighborhoods of $\gamma_0$ and $\gamma$ as
\begin{align*}
    &O_\epsilon(\gamma_0):=\{\eta \in \mc X_0~|~\eta \subset A_\e\},\\
    &O_\epsilon(\gamma):=\{\eta \in\mc X~|~\eta \subset \tilde A_\e\}.
\end{align*}
\begin{lem}\label{lem Hausdorff topology}
    The admissible neighborhoods $O_\e$ form a neighborhood basis for the Hausdorff topology on $\mc X$.
\end{lem}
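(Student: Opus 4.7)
We show that, for each $\g \in \mc X$ for which the admissible neighborhoods are defined, $\{O_\e(\g)\}_{\e > 0}$ is a neighborhood basis at $\g$ in the Hausdorff topology, which amounts to two inclusions: (a) for every $r > 0$ there is $\e > 0$ with $O_\e(\g) \subset B_r(\g)$, and (b) for every $\e > 0$ there is $r > 0$ with $B_r(\g) \cap \mc X \subset O_\e(\g)$, where $B_r(\g)$ denotes the open Hausdorff-distance ball of radius $r$.

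Part (b) is routine: $\g$ is compact, $\tilde A_\e$ is relatively open in $\bar D$ and contains $\g$ (with the stated compatibility along $\partial D$ near the boundary points), so by compactness the Euclidean $r$-thickening of $\g$ in $\bar D$ fits inside $\tilde A_\e$ for some $r > 0$. Any $\eta \in \mc X$ with $d_H(\eta, \g) < r$ is contained in this thickening, hence $\eta \in O_\e(\g)$.

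For part (a), fix $r > 0$. The condition $\eta \subset \tilde A_\e$ together with the uniform shrinking of the transverse thickness of $\tilde A_\e$ to $0$ immediately yields $\sup_{p \in \eta} \dist(p, \g) < r$ for small $\e$, handling one half of the Hausdorff distance. The reverse bound $\sup_{p \in \g} \dist(p, \eta) < r$ is the key topological content: I would foliate $\tilde A_\e$ by a family of transverse cross-sections $\{\tau_t\}_t$, one through each point $\g(t)$ of each strand of $\g$, with $\diam(\tau_t) \to 0$ uniformly. The matching endpoint conditions---shared boundary endpoints in each chordal strand, and the shared interior target in each radial strand---force, by connectedness (an intermediate-value argument along each $\tau_t$), the corresponding strand of $\eta$ to meet every $\tau_t$; hence $\g(t)$ lies within $\diam(\tau_t)$ of $\eta$.

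The main obstacle is constructing the foliation in the multi-radial case, where several strands meet at the common interior point and the foliation must respect the link pattern $\a$. A natural resolution is to pull everything back by $f^{-1}$ to the reference side, where $\g_0$ (the Loewner-potential minimizer) is analytic and admits standard tubular neighborhoods for each strand; the neighborhood of the common target can be taken to be a clean union of wedges realizing $\a$, provided $\e$ is small. Transporting this foliation through $f$ yields the required cross-sections on $\tilde A_\e$, preserving transversality and the endpoint matching.
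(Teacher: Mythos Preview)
Your argument is essentially correct for the statement you set out to prove, but you are proving a weaker statement than the paper does. You read ``neighborhood basis'' as a \emph{local} basis at each admissible $\gamma$: the family $\{O_\e(\gamma)\}_{\e>0}$ is cofinal among neighborhoods of $\gamma$. The paper instead proves (see Theorem~\ref{lem Hausdorff topology single chordal}) that the full collection $\{O_\e(\gamma)\}$, ranging over \emph{all} admissible $\gamma$ and $\e>0$, is a basis for the Hausdorff topology on the entire space $\mc X$: every Hausdorff-open set is a union of admissible neighborhoods.

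The gap between these is substantive. The admissible neighborhoods $O_\e(\gamma)$ are only defined for $\gamma$ that is locally conformally equivalent to the minimizer $\gamma_0$ (in particular $\gamma$ is analytic); a generic element $\eta\in\mc X$ is not of this form, so one cannot simply invoke $O_\e(\eta)$. The paper's second step handles this: given an arbitrary $\eta$ in an open set $O_h$, it uniformizes each complementary component of $\eta$ to build a conformal map $G$ from a reference tube $A_\e$ onto a thin two-sided tube $F_{\delta,\tilde\delta}$ around $\eta$, sets $\gamma_\eta:=G(\gamma_0)$ (which \emph{is} admissible), and then shows $\eta\in O_{\e_\eta}(\gamma_\eta)\subset O_h$. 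Your foliation argument never produces such a nearby admissible curve, so it cannot cover non-analytic $\eta$.

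Your parts (a) and (b) are fine as far as they go (and your (b), applied at an arbitrary $\eta\in O_\e(\gamma)$ rather than at $\gamma$, already gives the paper's first step that each $O_\e$ is open). If you want to match the paper's statement, the missing ingredient is precisely the uniformization construction of an admissible $\gamma_\eta$ close to an arbitrary $\eta$; once you have that, your local estimate (a) at $\gamma_\eta$ finishes the job.
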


In this paper, we refer to the SLE measure as the product of the SLE partition function and the SLE probability measure.
The partition functions (see, e.g.,\cite{BBK,BD06_growth,Dub_couplings,Dub_comm,Law09,Dub_duality,Zhan,FK_CFT}) are smooth positive functions that determine the SLE measures. They satisfying some hypoelliptic partial differential equations giving rise to SLE martingales and M\"obius covariance, a consequence of the conformal invariance from statistical physical models \cite{Smi:ICM,SS05_explorer}.
For $\k \in (0,4]$, let $Q_{D,\bar x}^\k$ denote the corresponding SLE measure on $(\mc X, \mc F)$. We fix the reference $\SLE_\k$ measure $Q_{D,\bar x_0}^\k$ on $(\mc X_0, \mc F_0)$.

\begin{thm}\label{main theorem}
    For $\k \in (0,4]$, for any element $\g \in\mc X$ that is locally conformally equivalent to $\g_0 \in \mc X_0$ with a collection of admissible neighborhoods $O_\e(\g)\in \mc F$ inherited via a conformal map $f$ from the reference element $\g_0 \in \mc X_0$ with $O_\e(\g_0)\in \mc F_0$ defined as above, let $Q_{D,\bar x_0}^\k$ and $ Q_{D,\bar x}^\k$ denote the $\SLE$ measure on the measurable space $(\mc X_0, \mc F_0)$ and $(\mc X, \mc F)$, respectively, then we have
    \begin{equation}\label{eq OM}
          \lim_{\epsilon \rightarrow 0} \frac{ Q_{D,\bar x}^\kappa(O_\epsilon({\gamma}))}{ Q_{D,\bar x_0}^\kappa(O_\epsilon({\gamma}_0))}=\exp \left( \frac{c(\kappa)}{2} (\mc H_{D,\bar x}({\gamma}) -\mc H_{D,\bar x_0}({\gamma}_0)) -F_\k(\g)\right),
    \end{equation}
    where $\mc H$ denotes the Loewner potential defined in \eqref{eq single chordal potential}\eqref{eq multiple chordal potential}\eqref{eq forced chordal potential}\eqref{eq single radial potential}\eqref{eq forced radial potential}\eqref{eq multiple radial potential} depending on the type of SLE, and the function
    \begin{equation*}
    F_\k(\g)=\sum_{j=1}^n e_\k(j)\log \abs{f'(x_{0,j})},
    \end{equation*}
   for some explicit number $e_\k(j) \in \m R$. Moreover, $F_\k(\g)$ depends on $\g_0$, the admissible neighborhoods, the conformal map $f$, but we have \begin{equation}\label{eq kF}
    \lim_{\k \to 0}\k F_\k(\g)= 0.
\end{equation}
\end{thm}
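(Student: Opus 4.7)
The plan is to reduce the unified statement \eqref{eq OM} to the six case-by-case arguments given later in the paper (single/multi-chordal, chordal with force points, single/multi-radial, radial variants) by a common scheme built from three ingredients: (i) the decomposition $Q_{D,\bar x}^\k=\mc Z_{D,\bar x}^\k\,\mathbb P_{D,\bar x}^\k$ of the $\SLE$ measure into partition function and probability measure; (ii) conformal restriction covariance of $\SLE_\k$; and (iii) the conformal deformation formula for the Loewner potential $\mc H$, whose multi-radial version is developed earlier in the paper.

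First I would apply conformal restriction covariance on both sides. For each pair $(D,\bar x,\tilde A_\e)$ and $(D,\bar x_0,A_\e)$, restricting the ambient $\SLE_\k$ measure to curves contained in the subdomain gives a Radon--Nikodym derivative of the form $\exp\bigl(\tfrac{c(\k)}{2}m_D^{\mathrm{loop}}(\mc L)\bigr)$ times a partition function ratio, where $m^{\mathrm{loop}}$ is the Brownian loop measure and $\mc L$ is the set of loops in $D$ hitting $D\setminus A_\e$ (respectively $D\setminus\tilde A_\e$). Transporting $Q_{\tilde A_\e,\bar x}^\k(O_\e(\g))$ through $f^{-1}$ to $Q_{A_\e,\bar x_0}^\k(O_\e(\g_0))$ via the conformal covariance of $Q$ produces a factor $\prod_j |f'(x_{0,j})|^{-h_j(\k)}$ from the partition function weights $h_j(\k)$ at $x_{0,j}$. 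Altogether, for every $\e$ small enough, the ratio $Q_{D,\bar x}^\k(O_\e(\g))/Q_{D,\bar x_0}^\k(O_\e(\g_0))$ equals $\prod_j|f'(x_{0,j})|^{-h_j(\k)}\exp\bigl(\tfrac{c(\k)}{2}[m_D^{\mathrm{loop}}(\tilde{\mc L}_\e)-m_D^{\mathrm{loop}}(\mc L_\e)]\bigr)$. The individual loop masses diverge as $\e\to 0$, but the conformal deformation formula for $\mc H$ asserts precisely that their difference converges to $\mc H_{D,\bar x}(\g)-\mc H_{D,\bar x_0}(\g_0)$ up to explicit $\log|f'(x_{0,j})|$ corrections. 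Combining these corrections with the partition function exponents yields $F_\k(\g)=\sum_j e_\k(j)\log|f'(x_{0,j})|$ with an explicit $e_\k(j)$, giving \eqref{eq OM}; Lemma~\ref{lem Hausdorff topology} ensures that the limit is intrinsic to the Hausdorff topology.

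The main obstacle is the multi-radial case, where the conformal deformation formula for $\mc H_{D,\bar x}$ at an interior insertion is new to this paper and where interior and boundary insertions carry different conformal weights: the boundary Cardy weight $h_{1,2}(\k)=(6-\k)/(2\k)$ and an interior weight of the form $(\k-2)(6-\k)/(8\k)$. The bookkeeping of these weights against the Schwarzian contributions from the deformation formula is delicate; in particular, the final $e_\k(j)$ must have its $1/\k$ singularity (coming from $h_j(\k)$) cancelled so that $\k e_\k(j)\to 0$, which is exactly \eqref{eq kF}. Once $e_\k(j)$ is written down explicitly case-by-case, \eqref{eq kF} reduces to a direct verification of this cancellation, reflecting the known semiclassical compatibility between the $\SLE_\k$ partition function and the Loewner potential as $\k\to 0$.
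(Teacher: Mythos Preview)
Your overall strategy (conformal covariance plus generalized conformal restriction plus the deformation formula for $\mc H$) matches the paper's, but two steps in your execution do not work as written and differ from what the paper actually does.

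First, the Radon--Nikodym derivative in the generalized conformal restriction is a function of the \emph{random} curve: it is $\exp\bigl(\tfrac{c(\k)}{2}\mc B(\eta,D\setminus A;D)\bigr)$, the mass of loops hitting both $\eta$ and $D\setminus A$, not a fixed number depending only on the set $D\setminus A_\e$. Consequently, after applying GCR and COV the numerator is an \emph{integral} over $\eta\in O_\e(\g_0)$ of a weight depending on $\eta$, and the ratio $Q_{D,\bar x}^\k(O_\e(\g))/Q_{D,\bar x_0}^\k(O_\e(\g_0))$ is not equal to a single expression $\prod_j|f'(x_{0,j})|^{-h_j(\k)}\exp(\dots)$ for fixed $\e$. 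The paper handles this by proving uniform convergence of $\eta\mapsto\mc B(\eta,D\setminus A;D)$ and of the cross term $\mc B(\bar\eta;D)$ over $\eta\in O_\e(\g_0)$ as $\e\to 0$ (Lemmas~\ref{1uniform convergence},~\ref{2uniform convergence},~\ref{3 uniform convergence}); this sandwiching step is the technical heart of the argument and is absent from your sketch.

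Second, you run GCR with the \emph{shrinking} subdomain $A_\e$, which forces you to deal with loop masses that diverge as $\e\to 0$, and then you appeal to the deformation formula to claim the difference converges. But the deformation formula (Theorem~\ref{thm 2}) is stated for a \emph{fixed} neighborhood $A$ with finite loop masses; it does not control the difference of two diverging quantities. The paper avoids this entirely: it fixes once and for all an intermediate neighborhood $A=A_{\e_0}$, applies GCR between $D$ and $A$ (finite loop masses), transports by $f$, applies GCR back, and only then lets $\e\to 0$ inside $A$ using the uniform-convergence lemmas. With the fixed $A$, the deformation formula applies directly to the limiting expression and yields the $e_\k(j)=b_\k(j)-c(\k)\lim_{\k\to0}b_\k(j)/c(\k)$, from which $\k e_\k(j)\to 0$ is immediate.
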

The multi-chordal Loewner potential was introduced in the large deviation principle of multi-chordal SLE, as developed in the foundational work of \cite{PW24_LDP_multichordal}. Subsequent research has explored various extensions and formulations and the associated large deviation principles, notably in \cite{AOP_LDP_multiradial,Kru_rho_energy,AP_ADP_capacity,HPW_multiradial_LDP}.
\subsection{Strategy and discussion}
Following the strategy of \cite{CW24}, the elementary argument of Theorem~\ref{main theorem} proceeds through an application of the conformal covariance and generalized conformal restriction property (alternatively called boundary perturbation). These crucial properties are inherent to SLE processes \cite{Friedrich_Werner_03,WW_survey_CR,HPW_multiradial_perturbation,Wu15_conformal_restriction_radial,JL18,Wei18_conformal_restriction_trichordal,CG25_CR}, which we summarize below.
See Lemma~\ref{lem property of single chordal},~\ref{lem property of forced chordal},~\ref{lem property of single radial},~\ref{lem property of forced radial},~\ref{lem property of multiple radial}.
\begin{lem}\label{lem boundary perturbation and conformal convariance}
    Suppose $D \subset D' \subsetneqq \m C$ are simply connected domains. Suppose that $\bar x=(x_j)_{j=1}^n$ are $n$ distinct boundary or interior points of $\bar D$ and $\bar D'$. And assume that $\partial D$ and $\partial D'$ are analytic and agree in the neighborhoods of the boundary points. Let $Q^\k_{D, \bar x}$ denote the $\SLE_\k$ measure of a fixed type in $D$ with $\bar x$ as boundary and interior points, then we have
    \begin{itemize}
        \item ${\operatorname{(COV)}}$ For any conformal map $f$ on $D$, we have the conformal covariance rule: there exist $\bar b_\k=(b_\k(j))_{j=1}^n$
            \begin{equation*}
                f \circ Q^\kappa_{D; \bar x} = \abs{f'(\bar x)}^{\bar b_\k}  Q^\kappa_{f(D); f(\bar x)}.
            \end{equation*}
We use the notation $\abs{f'(\bar x)}^{\bar b(\kappa)}  =\prod_{j=1}^n \abs{f'( x_j)}^{\bar b_\kappa(j)}  $.
        \item $\operatorname{(GCR)}$ The law of $Q^\kappa_{D; \bar x}$ is absolutely continuous with respect to $Q^\kappa_{D'; \bar x}$ with Radon--Nikodym derivative 
            \begin{equation*}
                \mathbbm{1}_{\{\gamma \subset D\}} Y_{D,D';\bar x}(\gamma) = \mathbbm{1}_{\{\gamma \subset D\}} \exp\left( \frac{c(\kappa)}{2} \mc B(\gamma, D'\backslash D; D')\right),
            \end{equation*}
    \end{itemize}
    where $\mc B(\gamma, D'\backslash D; D')$ denotes the total mass of the set of Brownian loops that stay in $ D'$ and intersect both $\g$ and $ D' \backslash D$ (plus the cross terms in the multi-chordal case).
      
\end{lem}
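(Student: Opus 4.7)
The plan is to prove (COV) and (GCR) by decomposing the SLE measure as a product $Q^\kappa_{D,\bar x} = Z^\kappa_{D,\bar x} \cdot P^\kappa_{D,\bar x}$ of the partition function and the SLE probability measure, and then matching the two asserted properties factor by factor.

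For (COV), I would combine two ingredients: (i) the conformal invariance of the probability measure $P^\kappa_{D,\bar x}$, i.e., $f_\ast P^\kappa_{D,\bar x} = P^\kappa_{f(D), f(\bar x)}$ for any conformal $f$, which is the defining Schramm property of the SLE probability law realized by a Loewner evolution whose driving diffusion depends only on domain-intrinsic data; and (ii) the conformal covariance of the partition function, namely $Z^\kappa_{D,\bar x} = \prod_{j=1}^n |f'(x_j)|^{b_\kappa(j)} Z^\kappa_{f(D), f(\bar x)}$, with $b_\kappa(j)$ the appropriate boundary or bulk conformal weight at $x_j$. The latter is a direct consequence of the M\"obius covariance constraint imposed on the BPZ-type PDE system that defines $Z$. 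Multiplying the two identities and pushing forward by $f$ yields (COV) with $\bar b_\kappa = (b_\kappa(j))_{j=1}^n$.

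For (GCR), I would apply Girsanov's theorem along the Loewner driving process. When $D \subset D'$ agree near the marked points, the two driving diffusions (in $D$ and in $D'$) differ by a drift involving $\kappa \partial \log (Z^\kappa_D / Z^\kappa_{D'})$ evaluated along the growth. The induced Radon--Nikodym derivative between the SLE probability laws in $D$ and in $D'$ (restricted to $\{\gamma \subset D\}$) is then the product of a ratio of partition functions and an exponential martingale. The crucial input is the Lawler--Schramm--Werner identification of this exponential martingale with the Brownian loop measure at central charge $c(\kappa)$. Passing from the probability measures $P^\kappa$ to the measures $Q^\kappa = Z^\kappa P^\kappa$, the partition function ratio cancels, leaving exactly the density $\mathbbm{1}_{\{\gamma \subset D\}} \exp((c(\kappa)/2)\,\mc B(\gamma, D' \setminus D; D'))$ of the statement. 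In the multi-chordal case, where $Z^\kappa$ decomposes as a linear combination of pure partition functions indexed by link patterns, the extra cross-term noted in the statement arises from Brownian loops that intersect more than one chord simultaneously.

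The detailed proof is case-by-case and largely reassembles results from the literature: single chordal \cite{Friedrich_Werner_03,WW_survey_CR}, chordal with force points \cite{JL18,Wei18_conformal_restriction_trichordal}, single radial \cite{Wu15_conformal_restriction_radial}, and multi-chordal \cite{PW24_LDP_multichordal,CG25_CR}. The principal obstacle is the multi-radial case: although (COV) follows from the partition function analyses in \cite{HPW_multiradial_perturbation,HPW_multiradial_LDP}, the restriction identity (GCR) in the form stated requires an independent Girsanov computation along the multi-radial Loewner growth toward a common interior target, with careful treatment of the radial time change and of the stopping event that one arc reaches the target before the others. I plan to model this argument on the multi-chordal case, substituting the radial Loewner equation and its null-vector PDE system, and verifying that the It\^o computation of the Girsanov exponent matches the Brownian loop measure formula term-by-term against the pure partition function expansion.
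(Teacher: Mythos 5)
Your proposal is correct and follows essentially the same route as the paper. The paper does not give a standalone proof of this lemma: it treats it as a summary whose content is distributed over Lemmas~\ref{lem property of single chordal}, \ref{lem property of forced chordal}, \ref{lem property of single radial}, \ref{lem property of forced radial} and \ref{lem property of multiple radial}. There, $\operatorname{(COV)}$ is built directly into the construction of each measure $Q^\k=H^\k\times\mu^\k$ via the scaling rule imposed on the partition function $H^\k$ (your point (ii)) together with conformal invariance of the probability law (your point (i)), and $\operatorname{(GCR)}$ is assembled case by case from the literature by exactly the Girsanov/local-martingale mechanism you describe --- for the forced case the paper even names the Dub\'edat local martingale $h_t'(W_t)^{b_1}h_t'(V_t)^{b_2}\cdots$ that drives the computation. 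The one substantive divergence is the multi-radial case: you propose to carry out the Girsanov computation along the multi-radial Loewner flow yourself, whereas the paper simply imports the restriction identity from the cited Huang--Peltola--Wu manuscript; your plan is more self-contained but does the work the paper outsources. Two minor points to align with the paper: for the $\rho$-variants the two-sided restriction in the stated form requires $\rho\ge\k/2-2$ so that the curve avoids the force point (the paper records this hypothesis explicitly); and in the multi-chordal case the paper does not expand the partition function over link patterns but fixes one pattern and defines the measure by tilting independent chordal SLEs by $\exp\bigl(\tfrac{c(\k)}{2}\mc B(\bar\gamma;\m H)\bigr)$, from which the cross terms in $\operatorname{(GCR)}$ follow by combining the single-chord restriction factors with the difference of the multi-hit loop masses in $D$ and $D'$.
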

Through this framework, we establish Theorem~\ref{main theorem} via systematic analysis of the conformal deformation of the Loewner potential, which is inspired by \cite{RW,SW24_deforamtion,Wang21_note_on_conformal_restrcition}, especially the conformal deformation of the chordal Loewner energy. The technical core resides in the following lemmas:
\begin{itemize}
    \item Lemma~\ref{conformal deformation of Loewner potential} addresses conformal deformation of the chordal Loewner potential,
    \item Lemma~\ref{conformal deformation of rho-Loewner potential} extends these considerations to variants with force points,
    \item Lemma~\ref{conformal deformation of radial Loewner energy} establishes radial analogues,
    \item Lemma~\ref{conformal deformation of multi-radial Loewner potentail} establishes multi-radial analogues.
\end{itemize}
\begin{thm}\label{thm 2}
    For an element $\g \in \mc X$ with finite Loewner potential $\mc H_{D,\bar x}(\g)$, let $A$ be a neighborhood of $\g$ that agrees with $D$ near the boundary or interior points $\bar x=(x_j)_{j=1}^n$. Assume $f$ is a conformal map on $A$ such that $f(\g)$ is in $\mc X_{D,f(\bar x)}$ and $f(A)\subset D$ agrees with $D$ (including the boundary) near the boundary points $(f(x_j))_{j=1}^n$. Then we have
    \begin{equation*}
        \mc H_{D;f(\bar x)}({f(\gamma)})- \mc H_{D;\bar x}(\gamma)=\mc B(\gamma, D\backslash A;  D)- \mc B(f(\gamma),  D\backslash f(A);  D)+\sum_{j=1}^n e(j)\log \abs{f'(x_j)},
    \end{equation*}
    where
    \begin{equation*}
      e(j)=\lim_{\k \to 0}- \frac{2b_\k(j)}{c(\k)}.
    \end{equation*}
\end{thm}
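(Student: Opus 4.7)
My plan is to reduce Theorem~\ref{thm 2} to the two properties of SLE recalled in Lemma~\ref{lem boundary perturbation and conformal convariance} via a semiclassical $\kappa\to 0$ limit, with the actual rigorous verifications packaged case-by-case in Lemmas~\ref{conformal deformation of Loewner potential}, \ref{conformal deformation of rho-Loewner potential}, \ref{conformal deformation of radial Loewner energy}, and \ref{conformal deformation of multi-radial Loewner potentail}. The organizing idea is that for each of the SLE variants considered, $\mc H_{D,\bar x}(\gamma)$ is a semiclassical $\kappa\to 0$ limit of a suitable rescaling of $\log Z^{\kappa}_{D,\bar x}$ augmented by single-chord Loewner-energy contributions, as is consistent with the large deviation principles of \cite{PW24_LDP_multichordal,AOP_LDP_multiradial,HPW_multiradial_LDP}. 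Correspondingly, the conformal weights $b_\k(j)$ of $Z^{\kappa}$ feed precisely into the coefficients $e(j)=\lim_{\kappa\to 0}-2b_\k(j)/c(\kappa)$.

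The core computation chains three uses of Lemma~\ref{lem boundary perturbation and conformal convariance}. First, $\operatorname{(GCR)}$ for $A\subset D$ gives
\begin{equation*}
\frac{dQ^\k_{A;\bar x}}{dQ^\k_{D;\bar x}}(\eta)=\mathbbm{1}_{\eta\subset A}\exp\!\left(\tfrac{c(\k)}{2}\mc B(\eta, D\setminus A; D)\right).
\end{equation*}
Second, $\operatorname{(COV)}$ applied to $f:A\to f(A)$ transports $Q^\k_{A;\bar x}$ to $\prod_{j}|f'(x_j)|^{b_\k(j)}\,Q^\k_{f(A);f(\bar x)}$. Third, $\operatorname{(GCR)}$ for $f(A)\subset D$ gives
\begin{equation*}
\frac{dQ^\k_{f(A);f(\bar x)}}{dQ^\k_{D;f(\bar x)}}(\eta')=\mathbbm{1}_{\eta'\subset f(A)}\exp\!\left(\tfrac{c(\k)}{2}\mc B(\eta', D\setminus f(A); D)\right).
\end{equation*}
Evaluating the chained identity on $O_\e(\gamma)$ and using that $\mc B(\,\cdot\,, K; D)$ is continuous in its first argument under Hausdorff convergence of regular curves, I obtain
\begin{equation*}
\log\frac{Q^\k_{D;f(\bar x)}(O_\e(f(\gamma)))}{Q^\k_{D;\bar x}(O_\e(\gamma))}=-\sum_{j=1}^n b_\k(j)\log|f'(x_j)|+\frac{c(\k)}{2}\bigl[\mc B(\gamma, D\setminus A; D)-\mc B(f(\gamma), D\setminus f(A); D)\bigr]+o_\e(1).
\end{equation*}
Multiplying by $\tfrac{2}{c(\k)}$ and sending $\e\to 0$ then $\k\to 0$ converts the left-hand side into $\mc H_{D;f(\bar x)}(f(\gamma))-\mc H_{D;\bar x}(\gamma)$ and each $-\tfrac{2b_\k(j)}{c(\k)}$ into $e(j)$, yielding the claimed formula.

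The main obstacle is that the derivation just described is formally circular with Theorem~\ref{main theorem}, so to produce a genuine proof it must be carried out independently for each SLE variant by computing directly from the explicit potentials in \eqref{eq single chordal potential}--\eqref{eq multiple radial potential}. For the single chordal case I would combine the Rohde--Wang deformation formula for the chordal Loewner energy \cite{RW,Wang21_note_on_conformal_restrcition} with the conformal covariance of the chordal partition function. The genuinely harder cases are the multi-chordal and multi-radial variants: $\mc H_{D,\bar x}$ there contains cross-chord interaction terms (Coulomb-gas factors in the partition function, matched by the ``plus the cross terms'' Brownian loop contributions mentioned in $\operatorname{(GCR)}$), and the bulk of the work is to show that these cross terms collapse under $f$ into a single net difference $\mc B(\gamma, D\setminus A; D)-\mc B(f(\gamma), D\setminus f(A); D)$ rather than contributing separately. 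I expect this collapse to be the technical heart of Lemmas~\ref{conformal deformation of rho-Loewner potential}, \ref{conformal deformation of radial Loewner energy}, and \ref{conformal deformation of multi-radial Loewner potentail}.
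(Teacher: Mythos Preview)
Your proposal is correct and matches the paper's approach: Theorem~\ref{thm 2} is indeed established by the deterministic case-by-case Lemmas~\ref{conformal deformation of Loewner potential}, \ref{conformal deformation of rho-Loewner potential}, \ref{conformal deformation of radial Loewner energy}, and \ref{conformal deformation of multi-radial Loewner potentail}, and the paper's remark after the statement makes exactly your point that the semiclassical $\kappa\to 0$ route is only heuristic (hence circular with Theorem~\ref{main theorem}) while the actual proof is purely deterministic. One small calibration: in the multi-radial lemma the paper does not argue via a ``cross-term collapse'' but instead invokes an explicit finite-$\bar T$ decomposition of the potential from \cite{HPW_multiradial_LDP} into single-arc potentials plus a Brownian-loop term $m_{\bar T}$ and logarithmic corrections, then differences and sends $\bar T\to\infty$; the multi-chordal case is handled implicitly via $\mc H(\bar\gamma)=\sum_j\mc H(\gamma_j)+\mc B(\bar\gamma;\m H)$ together with Lemma~\ref{conformal deformation of Loewner potential} and conformal invariance of the interaction term.
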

\begin{remark}
    Although $e_j$ equals the limit as $\k \to 0$, we prove Theorem~\ref{thm 2} using a deterministic approach.
    It may be viewed as a deterministic version of the generalized conformal restriction.
\end{remark}
\begin{proof}[Proof of Theorem~\ref{main theorem} using Lemma~\ref{lem boundary perturbation and conformal convariance} and Theorem~\ref{thm 2}]
    Fix $\epsilon_0$, we set $A=A_{\epsilon_0}$, $\tilde{A}=f(A_{\epsilon_0})$ and $\bar x = f(\bar x_0)$. 
    By conformal restriction and conformal covariance, we have
    \begin{align*}
        Q_{D; \bar x}^\kappa(O_\epsilon(\gamma))&\xlongequal{\text{Def}}\int \mathbbm{1}_{\{\tilde{\eta} \subset \tilde{A_\epsilon}\}} \dd Q_{D;\bar x}^\kappa (\tilde{\eta})\xlongequal{\text{GCR}} \int \mathbbm{1}_{\{\tilde{\eta} \subset \tilde{A_\epsilon}\}} \left(Y_{\tilde{A},D;\bar x}(\tilde{\eta})\right)^{-1} \dd Q^\kappa_{\tilde{A}; \bar x} (\tilde{\eta})\\
        &\xlongequal{\text{COV}} \int \mathbbm{1}_{\{\eta \subset A_\epsilon\}} \left(Y_{\tilde{A},D;\bar x}(f(\eta))\right)^{-1}\abs{f'(\bar x_0)}^{\bar b_\kappa} \dd Q^\kappa_{A; \bar x_0} (\eta) \\
        &\xlongequal{\text{GCR}} \int \mathbbm{1}_{\{\eta \subset A_\epsilon\}} \frac{Y_{A, D ; \bar x}(\eta)}{Y_{\tilde{A},D;\bar x}(f(\eta))} \abs{f'(\bar x_0)}^{\bar b_\kappa} \dd Q_{D; \bar x_0}^\kappa (\eta).
  \end{align*}
        
Using Lemma~\ref{lem boundary perturbation and conformal convariance}, we have that 
  \begin{align*}
      &\frac{Y_{A, \m H ; 0 ,\infty}(\eta)}{Y_{\tilde{A},\m H;x,y}(f(\eta)} \abs{f'(\bar x_0)}^{\bar b_\kappa}\\
      &= \exp \left( \frac{c(\kappa
      )}{2} ( \mc B(\eta, \m H \backslash A; \m H) -  \mc B(f(\eta), \m H \backslash \tilde{A}; \m H) ) -\sum_{j=1}^nb_\kappa(j) \log\abs{f'(x_{0,j})}\right) .
  \end{align*}
Using the uniform convergence of Brownian loop measure (see Lemma~\ref{1uniform convergence}, ~\ref{2uniform convergence}, ~\ref{3 uniform convergence}), we have
\begin{align*}
    \frac{Q_{D; \bar x}^\kappa(O_\epsilon(\gamma))}{Q_{D; \bar x_0}^\kappa(O_\epsilon(\gamma_0))}
    &=\frac{  \int \mathbbm{1}_{\{\eta \subset A_\epsilon\}} \frac{Y_{A, D ; \bar x}(\eta)}{Y_{\tilde{A},D;\bar x}(f(\eta))} \abs{f'(\bar x)}^{\bar b_\kappa} \dd Q_{D; \bar x_0}^\kappa (\eta)}{ \int \mathbbm{1}_{\{\eta \subset A_\epsilon\}}  \dd Q_{D; \bar x_0}^\kappa (\eta)}\\
    &\stackrel{\epsilon \rightarrow 0+}{\longrightarrow}
    \exp \left( \frac{c(\kappa)}{2}(\mc B(\gamma_0, D \backslash A; D) -  \mc B(\gamma, D \backslash \tilde{A}; D))-\sum_{j=1}^nb_\kappa(j) \log\abs{f'(x_{0,j})}\right).
\end{align*}
By Theorem~\ref{thm 2}, the item in the above exponent equals
\begin{align*}
     \frac{c(\kappa)}{2}\left(\mc H_{D;\bar x}(\gamma)-\mc H_{D;\bar x_0}(\gamma_0)\right)-F_\k(\g),
\end{align*}
where
\begin{equation*}
        F_\k(\g)=\sum_{j=1}^n e_\k(j)\log \abs{f'(x_{0,j})},
    \end{equation*}
    with
     \begin{equation*}
        e_\k(j)=b_\k(j)+\frac{c(\k)}{2}e(j)=b_\k(j)-c(\k)\lim_{\k \to 0} \frac{b_\k(j)}{c(\k)}.
    \end{equation*}
    It is not hard to check that $\k e_\k(j)\to 0$ hence $\k F_\k(\g) \to 0$ as $\k \to 0$.
\end{proof}

Now we discuss our result and relate it with the large deviation principle. We interpret the multiple Loewner potential rather than the Loewner energy as an Onsager--Machlup functional for the multiple SLE, as the two SLE measures that are of the same type in the fraction
\[
\frac{ Q_{D,\bar x}^\kappa(O_\epsilon({\gamma}))}{ Q_{D,\bar x_0}^\kappa(O_\epsilon({\gamma}_0))}
\]
may have different boundary points and different link patterns. This can be understood in the following way: we are considering the Onsager--Machlup functional of a generalized version of SLE measure, which is a measure not only on the multi-chords or the multi-arcs but also on the boundary points.
For the probability measure $\m P^\k=Q_{D,\bar x_0}^\k/\abs{Q_{D,\bar x_0}^\k}$, an element $\g \in \mc X_0$ that is locally conformally equivalent to $\g_0 \in \mc X_0$ and the Loewner energy 
\begin{equation*}
    I^L(\g)=I^L_{D,\bar x_0}(\g)=\left(\mc H_{D,\bar x_0}({\gamma}) -\mc H_{D,\bar x_0}({\gamma}_0)\right)/12,
\end{equation*}
\eqref{eq OM} turns out to be
 \begin{equation}\label{eq OM special}
      \lim_{\epsilon \rightarrow 0} \frac{ \P^\kappa(O_\epsilon({\gamma}))}{ \P^\kappa(O_\epsilon({\gamma}_0))}=\exp \left( \frac{c(\kappa)}{24} I^L(\g) -F_\k(\g)\right),
\end{equation}
which is compatible with the large deviation principle (LDP) as the limit order is commutative

\begin{displaymath}
\xymatrix
{
    -\k\log {\P^\kappa(O_\epsilon({\gamma}))}+\k \log{ \P^\kappa(O_\epsilon({\gamma}_0))} \ar[d]_{\k \to 0}^{\text{LDP}} \ar[r]^{~~~~~~\e\to 0}_{~~~~~~\eqref{eq OM special}}
   &-\k \left( \frac{c(\kappa)}{24} I^L(\g) -F_\k(\g)\right)\ar[d]^{\k \to 0}_{\eqref{eq kF}} \\
    \approx 
    \inf_{\eta \in O_\epsilon({\gamma})} I^L(\eta)- \inf_{\eta \in O_\epsilon({\gamma_0})} I^L(\eta)  \ar[r]_{~~~~~~~~~~~~~~~~~~\e \to 0}^{~~~~~~~~~~~~~~~~~~~~\text{lower-semicontinuity}} &I^L(\g). }
\end{displaymath}

The precise meaning of  ``$\approx$'' in the large deviation principle is as follows:
\begin{equation*}
    {\inf}_{\eta \in \overline{O_\epsilon({\gamma})}} I^L(\eta)\leq\lim_{\k \to 0}-\k\log {\P^\kappa(O_\epsilon({\gamma}))}\leq  {\inf}_{\eta \in O_\epsilon({\gamma})} I^L(\eta).
\end{equation*}
\begin{equation*}
      {\inf}_{\eta \in {O_\epsilon({\gamma_0})}}\leq\lim_{\k \to 0}-\k \log{ \P^\kappa(O_\epsilon({\gamma}_0))}\leq   {\inf}_{\eta \in \overline{O_\epsilon({\gamma_0})}}I^L(\eta).
\end{equation*}
Nevertheless, the lower and upper bounds tend to the same limit as $\e \to 0$ using the lower-semicontinuity of Loewner energy.
Finally we remark that, using Lemma~\ref{lem arbitary choice of F}, we could have
\begin{equation}\label{eq OM special special}
          \lim_{\epsilon \rightarrow 0} \frac{ \P^\kappa(O_\epsilon({\gamma}))}{ \P^\kappa(O_\epsilon({\gamma}_0))}=\exp \left( \frac{c(\kappa)}{24} I^L(\g) \right),
    \end{equation}
    with a special choice of the conformal map $f$. However, the choice is not always independent of $\k$ unless there exists $c(j)$ such that $b_\k(j)=b_\k(1)c(j)$ for $1\leq j\leq n$. Besides, even when the chords $\g_0$ and $\g$ are hyperbolic geodesics in the unit disk $\m D$, the special conformal map $f$ does not coincide with any natural M\"obius map $m: \m D\rightarrow \m D$ satisfying $m(\g_0)=\g$.

\vspace{8pt}

\textbf{Acknowledgements.} The author wishes to thank Yilin Wang for her helpful discussions and generous guidance. 
The author thanks Chongzhi Huang, Eveliina Peltola and Hao Wu for sharing their manuscript \cite{HPW_multiradial_perturbation,HPW_multiradial_LDP} and helpful discussion.

This work was completed in part and discussed during the complex analysis conference ``Complex Faces'' in Venice. The author thanks the organizers for their preparation and hospitality, and thanks the participants for their contribution. The author is funded by Beijing Natural Science Foundation (JQ20001); the European Union (ERC, RaConTeich, 101116694) and Tsinghua scholarship for overseas graduates studies (2023076).

\section{Preliminaries}
Below, we assume any $D$ and $D^\prime$ are simply connected domains with non-polar boundaries, which are proper subsets of $\m C$. When we talk about the boundary point, we assume the boundary is analytic around it.
\subsection{Brownian loop measure}
The Brownian loop measure from \cite{LW2004loupsoup} satisfies the following two properties:
\begin{itemize}
    \item (Restriction property) If $D' \subset D$, then $\dd \mu^{\textbf{BL}}_{D'}(\cdot)=\mathbbm{1} _{\{\cdot \subset D'\}} \dd \mu^{\textbf{BL}}_{D}(\cdot)$.
    \item (Conformal invariance)  If $D$ and $D'$ are conformally equivalent, then the pushforward of $\mu^{\textbf{BL}}_{D}$ via any conformal map from $D$ to $D'$, is exactly $\mu^{\textbf{BL}}_{D'}$.
\end{itemize}
Note that the total mass of loops contained in $\m C$ is infinite. However, when $D \subsetneqq \m C$ is a simply connected domain with a non-polar boundary, and $V_1$ and $V_2$ are two disjoint compact subsets of $D$, the total mass of the set of loops $\mc L(V_1, V_2;D)$ that do stay in $D$ and intersect both $V_1$ and $V_2$ is finite. We then define
\begin{equation*}
    \mc B(V_1, V_2; D):=\mu^{\textbf{BL}}_D(\mc L(V_1, V_2;D)).
\end{equation*}
\subsection{Conformal deformation of chordal Loewner potential}
Let $\gamma$ be a simple chord in $(\m H;0, \infty)$, which we choose to parameterize by the half-plane capacity seen from $\infty$. That is, the conformal map $g_t: \m H \backslash \gamma[0, t] \rightarrow \m H$ can be normalized by $g_t(z)=z+2t/z+ o(1/z)$ near $\infty$. By extension, we can define $W(\cdot)=g_\cdot(\gamma(\cdot))$, which is called the driving function of $\gamma$. The chord $\g$ can be recovered from $W$ using the Loewner equation
\begin{equation*}
    \partial_t g_t(z)=\frac{2}{g_t(z)-W_t},
\end{equation*}
with $g_0(z)=z$.
The chordal Loewner energy of $\gamma$ in  $(\m H; 0, \infty)$ is defined by 
\begin{equation*}
    I_{ \m H;0, \infty}(\gamma):=I(W):=\frac{1}{2}\int_0^\infty (\partial_tW(t))^2\dd t
\end{equation*}
when $W$ is absolutely continuous and is $\infty$ otherwise. For any simply connected domain $D \subsetneqq \m C$ with two prime ends $x$ and $y$, using a conformal map $\psi:D \rightarrow \m H$ such that $\psi(x)=0$ and $\psi(y)=\infty$, we can define the chordal Loewner energy of $\gamma$ in $(D;x,y)$ by
\begin{equation*}
    I_{D;x,y}(\gamma):=I_{\m H; 0, \infty}(\psi(\gamma)),
\end{equation*}
which does not depend on the choice of $\psi$.
The Loewner potential is defined by
\begin{equation}\label{eq single chordal potential}
     \mc H_{D; x,y}({\gamma}):= \frac{1}{12}  I_{D; x,y}(\gamma)  -\frac{1}{4}\log P_{D; x,y},
 \end{equation}
 where $P_{D; x,y}$ is the Poisson excursion kernel corresponding to $H^\k_{D;x,y}$ with $\k=2$ in the next subsection.
As $x$ and $y$ are the boundary points of $\g$, we write $I_D(\g)$ or $\mc H_D(\g)$ for short and use $I(\g)$ or $\mc H(\g)$ when $D=\m H$.
For a compact $\m H-$hull $K$ at positive distance to $0$, from \cite{Wang21_note_on_conformal_restrcition} , the chordal Loewner energy in $(\m H;0, \infty)$ and in $(\m H \backslash K;0, \infty )$ differ by 
\begin{equation*}
    I_{\m H \backslash K;0, \infty }(\gamma)-I_{\m H;0, \infty}(\gamma)= 3 \log\abs{\psi'(0)\psi'(\infty)} +12 \mc B(\gamma, K; \m H),
\end{equation*}
where $\psi : \m H \backslash K \rightarrow \m H$ is conformal and fixes $0$, $\infty$. Using this, we could obtain the conformal deformation of the Loewner potential as follows.
\begin{lem}\label{conformal deformation of Loewner potential}
    Let $\g$ be a chord in $\m H$ with finite Loewner energy and $A$ be a neighborhood of $\g$ in $\m H$ that agrees with $\partial \m H$ near the boundary points $x$ and $y$ of $\g$. Assume $f$ is a conformal map on $A$ such that $f(\g)$ is also a chord in $\m H$ and $f(A)$ agrees with $\partial \m H$ near the boundary points of $f(\g)$. Then we have 
    \begin{equation*}
        \mc H_{\m H;f(x), f(y)}({f(\gamma)})- \mc H_{\m H;x, y}(\gamma)= \frac{1}{4}\log\abs{f'(x)f'(y)}+ \mc B(\gamma, \m H\backslash A; \m H)- \mc B(f(\gamma), \m H\backslash f(A); \m H).
    \end{equation*}
\end{lem}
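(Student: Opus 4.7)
The plan is to interpolate through the intermediate quantities $\mc H_{A;x,y}(\g)$ and $\mc H_{f(A);f(x),f(y)}(f(\g))$, which are the Loewner potentials of $\g$ and $f(\g)$ viewed \emph{inside} the subdomains $A\subset \m H$ and $f(A)\subset \m H$ respectively. The conformal covariance of $\mc H$ under the conformal map $f:A\to f(A)$ will produce the derivative contribution $\tfrac14\log\abs{f'(x)f'(y)}$, while the hull deformation formula stated in the preceding paragraph will produce the two Brownian loop measure terms. These two ingredients come from independent sources, which is what keeps the calculation clean.

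For the first ingredient, I would verify the following conformal covariance for $\mc H$: for any conformal map $\phi:D\to D'$ sending $(x,y)$ to $(x',y')$,
\[
\mc H_{D';x',y'}(\phi(\g)) = \mc H_{D;x,y}(\g) + \tfrac14 \log \abs{\phi'(x)\phi'(y)}.
\]
This combines the conformal invariance of the Loewner energy built into $I_{D;x,y}(\g):=I_{\m H;0,\infty}(\psi(\g))$ with the standard $(1,1)$-covariance $P_{D;x,y}=\abs{\phi'(x)\phi'(y)}P_{D';x',y'}$ of the Poisson excursion kernel. Specialized to $\phi=f$ this gives
\[
\mc H_{f(A);f(x),f(y)}(f(\g)) - \mc H_{A;x,y}(\g) = \tfrac14\log\abs{f'(x)f'(y)}.
\]

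For the second ingredient, my target is the clean ``restriction identity'' $\mc H_{A;x,y}(\g) - \mc H_{\m H;x,y}(\g) = \mc B(\g,\m H\backslash A;\m H)$. Pulling back via a M\"obius map $\varphi:\m H\to\m H$ with $\varphi(x)=0$, $\varphi(y)=\infty$, the hypothesis that $A$ agrees with $\partial\m H$ near $x$ and $y$ forces $\varphi(A)$ to have the form $\m H\backslash K$ for a compact $\m H$-hull $K$ at positive distance from $0$. Let $\psi:\m H\backslash K\to\m H$ be the unique conformal map fixing $0$ and $\infty$. The hull formula recalled just above the lemma, together with conformal invariance of the Loewner energy and of the Brownian loop measure, then gives
\[
I_{A;x,y}(\g) - I_{\m H;x,y}(\g) = 3\log\abs{\psi'(0)\psi'(\infty)} + 12\,\mc B(\g,\m H\backslash A;\m H).
\]
Applying the $(1,1)$-covariance of $P$ to both $\psi\circ\varphi:A\to \m H$ and $\varphi:\m H\to\m H$ and using the chain rule at $0$ and at $\infty$ yields $\log P_{A;x,y}-\log P_{\m H;x,y} = \log\abs{\psi'(0)\psi'(\infty)}$. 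Plugging both into $\mc H=I/12-(\log P)/4$, the two copies of $\tfrac14\log\abs{\psi'(0)\psi'(\infty)}$ cancel and the restriction identity drops out.

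Summing the covariance identity together with the two restriction identities---one for $\g$ inside $A$ and one for $f(\g)$ inside $f(A)$---then produces exactly the formula stated in the lemma. The only subtlety I foresee is pinning down a consistent convention for the ``derivative at $\infty$'', via an expansion $\psi(z)=\psi'(\infty)\,z+O(1)$ near $\infty$, so that the chain rule computation for the Poisson kernel matches the normalization implicit in the quoted hull formula; once this bookkeeping is fixed the rest is assembly.
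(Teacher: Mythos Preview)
Your proposal is correct and uses the same ingredients as the paper's proof---the hull deformation formula for $I$, conformal invariance of $I$ and of the Brownian loop measure, and the $(1,1)$-covariance of the Poisson kernel. The only difference is organizational: you first package these into two clean identities for $\mc H$ (the covariance $\mc H_{f(A)}(f(\g))-\mc H_A(\g)=\tfrac14\log|f'(x)f'(y)|$ and the restriction $\mc H_A(\g)-\mc H_{\m H}(\g)=\mc B(\g,\m H\backslash A;\m H)$) and then sum, whereas the paper keeps $I$ and $P$ separate, applies the hull formula with uniformizing maps $\psi_1,\psi_2$ fixing the actual endpoints, and introduces the M\"obius map $M=\psi_2\circ f\circ\psi_1^{-1}$ to handle the Poisson kernel ratio at the end; the paper's choice of $\psi_i$ fixing finite points sidesteps the ``derivative at $\infty$'' bookkeeping you flagged.
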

\begin{proof}
    As $\m H\backslash A$ and $\m H\backslash f(A)$ are compact $\m H-$hulls at positive distance to the boundary points of $\g$ and $f(\g)$ respectively, we have
    \begin{equation*}
    I_{A;x, y }(\gamma)-I_{\m H;x, y}(\gamma) = 3 \log\abs{\psi_1'(x)\psi_1'(y)} +12 \mc B(\gamma, \m H\backslash A; \m H),
\end{equation*}
where $\psi_1 : A \rightarrow \m H$ is conformal and fixes $x$ and $y$, and 
\begin{equation*}
    I_{f(A);f(x),f( y) }(f(\gamma))-I_{\m H;f(x), f(y)}(f(\gamma)) = 3 \log\abs{\psi_2'(f(x))\psi_2'(f(y))} +12 \mc B(f(\gamma), \m H\backslash f(A); \m H),
\end{equation*}
where $\psi_2 : f(A) \rightarrow \m H$ is conformal and fixes $f(x)$ and $f(y)$.
Note that $I_{A;x, y }(\gamma)=I_{f(A);f(x),f( y) }(f(\gamma))$ by conformal invariance. And $M:=\psi_2 \circ f \circ\psi_1^{-1}$ is a \Mo map, as it is an automorphism of $\m H$. So we have
\begin{equation*}
    \abs{M'(x)M'(y)}=\abs{\frac{M(x)-M(y)}{x-y}}^2=\frac{P_{\m H; x,y}}{P_{\m H; f(x),f(y)}}.
\end{equation*}

Using the chain rule, we have $M'\psi_1'=\psi_2'\circ ff'$ at $x$ and $y$.

Combining these, we have
\begin{align*}
    &\mc H_{\m H;f(x), f(y)}({f(\gamma)})- \mc H_{\m H;x, y}(\gamma)\\
    =& \frac{1}{12}  (I_{\m H;f(x), f(y)}(f(\gamma))-I_{\m H;x, y}(\gamma) ) -\frac{1}{4}\log\frac{ {P_{\m H; f(x),f(y)}}}{{P_{\m H; x,y}}}\\
    =& \frac{1}{4}\log\abs{f'(x)f'(y)}+ \mc B(\gamma, \m H\backslash A; \m H)- \mc B(f(\gamma), \m H\backslash f(A); \m H).
\end{align*}
\end{proof}

\subsection{Conformal restriction covariance of weighted chordal SLE }
From now on, we focus on $\kappa \in (0,4]$. Let 
\begin{equation*}
    b(\kappa)=\frac{6-\kappa}{2\kappa}
\end{equation*}
denote the boundary conformal weight. Let 
\begin{equation*}
    c(\kappa)=-\frac{(6-\kappa)(8-3\kappa)}{2\kappa}
\end{equation*}
denote the central charge.
On any simply connected domain $D$ with two distinct boundary points $x$, $y$ at which $\partial  D$ is analytic, the weighted $\SLE_\kappa$ measure on chords in $(D; x,y)$ is defined to be
\begin{equation*}
    Q^\kappa_{D; x,y}=H^\kappa_{D; x,y} \times \mu^\kappa_{D; x,y},
\end{equation*}
where $\mu^\kappa_{D; x,y}$ is the $\SLE_\kappa$ probability measure with $\sqrt{\k}$ times Brownian motion as the driving function of the Loewner equation, and $H_{D; x,y}$ is determined by the scaling rule for any conformal map $f$ on $D$
\begin{equation*}
    H^\kappa_{D; x,y}= \abs{f'(x)}^{b(\kappa)}  \abs{f'(y)}^{b(\kappa)} 
 H^\kappa_{f(D); f(x),f(y)}
\end{equation*}
and the kernel
\begin{equation*}
    H^\kappa_{\m H;x, y} =  \abs{y-x}^{-2b(\kappa)}.
\end{equation*}
We have the conformal covariance rule 
\begin{equation*}
    f \circ Q^\kappa_{D; x,y} = \abs{f'(x)}^{b(\kappa)}  \abs{f'(y)}^{b(\kappa)} Q^\kappa_{f(D); f(x),f(y)}.
\end{equation*}
Following \cite{LSW_CR_chordal,KL07}, we have
\begin{lem}\label{lem property of single chordal}
    Suppose $D \subset D' \subsetneqq \m C$ are simply connected domains. Suppose that $x$, $y$ are distinct points of $\partial D$ and $\partial D'$. And $\partial D$ and $\partial D'$ are analytic and agree in neighborhoods of $x$, $y$. Then $Q^\kappa_{D; x,y}$ is absolutely continuous with respect to $Q^\kappa_{D'; x,y}$ with Radon--Nikodym derivative 
    \begin{equation*}
        \mathbbm{1}_{\{\gamma \subset D\}} Y_{D,D';x,y}(\gamma) = \mathbbm{1}_{\{\gamma \subset D\}} \exp\left( \frac{c(\kappa)}{2} \mc B(\gamma, D'\backslash D; D')\right),
    \end{equation*}
    where $Y$ is a conformal invariant.
\end{lem}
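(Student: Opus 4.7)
The plan is to deduce the claim for the unnormalized weighted measure $Q^\k = H^\k \times \mu^\k$ from the analogous classical statement for the probability measure $\mu^\k$: the generalized conformal restriction formula of Lawler--Schramm--Werner and Kozdron--Lawler. Because $Q^\k_{D;x,y} = H^\k_{D;x,y}\, \mu^\k_{D;x,y}$, the desired identity
\begin{equation*}
\frac{\dd Q^\k_{D;x,y}}{\dd Q^\k_{D';x,y}}(\g) = \mathbbm{1}_{\{\g \subset D\}}\, Y_{D,D';x,y}(\g)
\end{equation*}
is equivalent to the conjunction of (a) the density $\dd \mu^\k_{D;x,y}/\dd \mu^\k_{D';x,y}$ is proportional to $\mathbbm{1}_{\{\g \subset D\}} Y_{D,D';x,y}(\g)$, and (b) the proportionality constant equals $H^\k_{D;x,y}/H^\k_{D';x,y}$.

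I would establish (a) and (b) simultaneously via the standard martingale argument. Parameterizing $\g$ under $\mu^\k_{D'; x,y}$ by the half-plane capacity seen from $y$ and setting $D_t := D \setminus \g[0,t]$, $D'_t := D' \setminus \g[0,t]$, an It\^o computation shows that
\begin{equation*}
M_t \;:=\; \frac{H^\k_{D_t;\, \g(t),\, y}}{H^\k_{D'_t;\, \g(t),\, y}}\; \exp\!\left(\frac{c(\k)}{2}\, \mc B(\g[0,t],\, D'\setminus D;\, D'_t)\right)
\end{equation*}
is a local martingale on the event that $\g[0,t]$ has not yet touched $D'\setminus D$. The structural point is that the value $c(\k) = -(6-\k)(8-3\k)/(2\k)$ is precisely the one that makes the It\^o drift from the partition function ratio (computed using the conformal covariance of $H^\k$, the normalization $H^\k_{\m H; x,y} = |y-x|^{-2b(\k)}$, and the Loewner equation) cancel the drift from the Brownian loop measure correction (computed from its restriction and conformal invariance properties). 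Since $M_0 = H^\k_{D;x,y}/H^\k_{D';x,y}$ and $M_\infty = \mathbbm{1}_{\{\g \subset D\}}\, Y_{D,D';x,y}(\g)$, optional stopping delivers both (a) and (b) at once. Multiplying through by $H^\k_{D';x,y}$ then gives the lemma, and the conformal invariance of $Y_{D,D';x,y}$ is immediate from the conformal invariance of the Brownian loop measure.

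The main obstacle is precisely the It\^o calculation for $M_t$: one must differentiate the partition function ratio along the Loewner flow (using how $H^\k_{D_t;\g(t),y}/H^\k_{D'_t;\g(t),y}$ evolves as a function of the tip $\g(t)$ through the conformal covariance rule) and simultaneously differentiate the Brownian loop mass $\mc B(\g[0,t], D'\setminus D; D'_t)$ (which amounts to computing a boundary hitting rate for Brownian excursions at $\g(t)$), and verify that the second-order terms match. This matching, which is what determines $c(\k)$, is the technical core of the Kozdron--Lawler approach, and I would invoke \cite{LSW_CR_chordal, KL07} rather than rederive it here.
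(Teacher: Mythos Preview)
Your proposal is correct and matches the paper's treatment: the paper does not give a proof of this lemma but simply states it with the attribution ``Following \cite{LSW_CR_chordal,KL07}, we have'', and your sketch is precisely the martingale argument from those references, which you also cite. If anything, you give more detail than the paper does.
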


\section{Chordal case}
\subsection{Single case}
Now, we show the uniform convergence of the total mass of Brownian loops in the upper half-plane that intersect a chord and a hull.
\begin{lem}\label{1uniform convergence}
    Let $(A_\epsilon \subset A)_{\epsilon>0}$ be a decreasing family of simply connected neighborhoods of a simple chord $\gamma$ in $(\m H;x,y)$, which agree with $\partial \m H$ in the neighborhoods of boundary points. Assume that $A_\epsilon \downarrow \gamma$ as $\epsilon\downarrow 0$, then we have the uniform convergence as follows:
\begin{equation*}
    \sup_{\{\eta \in O_\epsilon\}}  \mc B(\eta, \m H \backslash A; \m H), \inf_{\{\eta  \in O_\epsilon\}}  \mc B(\eta, \m H \backslash A; \m H)   \stackrel{\epsilon \rightarrow 0+}{\longrightarrow} \mc B(\gamma, \m H \backslash A; \m H),
\end{equation*}
where $O_\e$ denotes the set of chords in $(A_\e;x,y)$.
\end{lem}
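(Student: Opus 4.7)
The plan is to reduce the statement to a sequential continuity result and then prove it by dominated convergence. It suffices to show that whenever $\eta_n \in O_{\epsilon_n}$ with $\epsilon_n \downarrow 0$, we have $\mc B(\eta_n,\m H\setminus A;\m H)\to \mc B(\gamma,\m H\setminus A;\m H)$: applied to sequences nearly realizing the supremum and the infimum in $O_{\epsilon_n}$, this sequential statement yields the stated uniform convergence (squeezing the sup from above and the inf from below by the common limit).

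To carry this out, I would first check that $\eta_n\to\gamma$ in the Hausdorff distance. One direction, $\sup_{z\in\eta_n}\dist(z,\gamma)\to 0$, is immediate from $\eta_n\subset A_{\epsilon_n}\downarrow\gamma$. For the reverse direction I use that $\eta_n$ is a simple chord from $x$ to $y$ in the simply connected tubular domain $A_{\epsilon_n}$: introducing a tubular coordinate near $\gamma$ and applying the intermediate value theorem, $\eta_n$ must meet every cross-section of the tube transverse to $\gamma$, so it stays within the shrinking tube width of every point of $\gamma$.

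Next, I would argue that for $\mu^{\mathbf{BL}}_{\m H}$-almost every loop $L$ one has $\mathbf{1}_{L\cap\eta_n\neq\emptyset}\to \mathbf{1}_{L\cap\gamma\neq\emptyset}$. Three cases: if $L$ is disjoint from $\gamma$, then compactness of $L$ together with Hausdorff convergence gives $L\cap\eta_n=\emptyset$ eventually; if $L$ meets both components of $\m H\setminus\gamma$, then for large $n$ it meets both components of $\m H\setminus\eta_n$ as well, so $L$ must cross $\eta_n$; the remaining configuration, in which $L$ touches $\gamma$ only tangentially (meets $\gamma$ yet lies in the closure of a single component of $\m H\setminus\gamma$), carries $\mu^{\mathbf{BL}}_{\m H}$-measure zero by the disintegration of the Brownian loop measure over Brownian bridges combined with the standard fact that a planar Brownian motion almost surely crosses an analytic curve transversally at every hitting time.

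Combining this pointwise convergence with the dominating function $\mathbf{1}_{L\cap\overline{A_{\epsilon_0}}\neq\emptyset,\,L\cap(\m H\setminus A)\neq\emptyset}$ for a fixed small $\epsilon_0>0$---which is $\mu^{\mathbf{BL}}_{\m H}$-integrable because $\overline{A_{\epsilon_0}}$ and $\m H\setminus A$ lie at positive distance in $\m H$, so loops meeting both have diameter bounded below and thus finite Brownian loop mass---dominated convergence yields $\mc B(\eta_n,\m H\setminus A;\m H)\to \mc B(\gamma,\m H\setminus A;\m H)$, concluding the proof. The main technical obstacle I anticipate is the measure-zero assertion for tangential configurations, which reduces to transversality of planar Brownian motion with respect to analytic curves; the rest is routine Hausdorff convergence plus bounded dominated convergence.
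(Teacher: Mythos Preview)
Your proposal is correct, but the paper's argument is quite different and more elementary. Instead of sequential Hausdorff continuity plus dominated convergence, the paper uses pure monotonicity of $\mc B(\,\cdot\,,\,\cdot\,;\m H)$ under set inclusion. For the supremum it simply bounds
\[
\mc B(\gamma,\m H\setminus A;\m H)\le \sup_{\eta\in O_\epsilon}\mc B(\eta,\m H\setminus A;\m H)\le \mc B(A_\epsilon,\m H\setminus A;\m H)\xrightarrow{\epsilon\to 0}\mc B(\gamma,\m H\setminus A;\m H)
\]
by monotone convergence. For the infimum it writes $\partial A\setminus\partial\m H=L\cup R$ and $\partial A_\epsilon\setminus\partial\m H=L_\epsilon\cup R_\epsilon$ (the two boundary arcs on either side of $\gamma$), uses inclusion--exclusion
\[
\mc B(\eta,\m H\setminus A;\m H)=\mc B(\eta,L;\m H)+\mc B(\eta,R;\m H)-\mc B(L,R;\m H),
\]
and then bounds $\mc B(\eta,L;\m H)\ge \mc B(R_\epsilon,L;\m H)$, $\mc B(\eta,R;\m H)\ge \mc B(L_\epsilon,R;\m H)$ since any chord $\eta\in O_\epsilon$ separates $L_\epsilon$ from $R_\epsilon$, again concluding by monotone convergence. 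Thus the paper never needs Hausdorff convergence of $\eta_n$ to $\gamma$, dominated convergence, or the measure-zero claim for tangential loops.

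What each approach buys: the paper's trick is slicker here and avoids any path-level input about Brownian motion; it exploits the two-sided tubular structure in an essential way. Your approach is more portable (it is essentially what the paper invokes later in Lemma~\ref{3 uniform convergence} for the radial case, where monotone squeezing is less convenient) and gives genuine continuity in the Hausdorff topology, not just convergence of extrema. Two remarks on your write-up: (i) the transversality fact you cite does not require $\gamma$ to be analytic---by conformal invariance of $\mu^{\textbf{BL}}_{\m H}$ you may map $\gamma$ to a straight segment, and planar Brownian motion a.s.\ enters both sides of any Jordan arc it hits; (ii) your dominating bound $\mc B(\overline{A_{\epsilon_0}},\m H\setminus A;\m H)<\infty$ is exactly the paper's upper envelope for the supremum, so both arguments rest on the same finiteness.
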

\begin{proof}
    For the supremum, it follows from the observation that 
    \begin{equation*}
        \mc B(\gamma, \m H \backslash A; \m H) \leq \sup_{\{\eta \in O_\epsilon\}}  \mc B(\eta, \m H \backslash A; \m H)\leq \mc B(A_\e, \m H \backslash A; \m H)\rightarrow \mc B(\gamma, \m H \backslash A; \m H).
    \end{equation*}
    The inequality follows from the containment of sets, while the limit comes from monotone convergence. 
    For the infimum, let $L_\epsilon$ (or $L$) and $R_\epsilon$ (or $R$) denote the connected component of the boundary of $A_\epsilon$ (or $A$) except $\partial \m H$. Then we have
    \begin{align*}
        \mc B(\eta, \m H \backslash A; \m H)&=\mc B(\eta, L; \m H)+\mc B(\eta, R; \m H)-\mc B(L, R; \m H)\\
        &\geq \mc B(R_\epsilon, L; \m H)+\mc B(L_\epsilon, R; \m H)-\mc B(L, R; \m H)\\
        &\stackrel{\epsilon \rightarrow 0+}{\longrightarrow} \mc B(\gamma, L; \m H)+\mc B(\gamma, R; \m H)-\mc B(L, R; \m H)\\
        &=\mc B(\gamma, \m H \backslash A; \m H).
    \end{align*}
    \end{proof}
Using the map $z \rightarrow (z-\ii)/(z+\ii)$, we identify $(\m H;0,\infty)$ and $(\m D;-1,1)$. Let $\gamma_0=[-1,1]$ denote the hyperbolic geodesic in the unit disk $\m D$, whose chordal Loewner energy is $0$. For any $\epsilon > 0$, let $A_\epsilon$ denote a neighborhood of $\gamma_0$ in $\m D$, which we need to have two chords and part of the unit circle as its boundary and decrease together with $\epsilon$ and agree with $\partial \m D$ in the neighborhoods of $-1$ and $1$.
Below, we choose $A_\epsilon$ to be the domain in $\m D$ bounded by the two hyperbolic geodesics connecting  $\pm \exp( \ii\e)$ and $\mp \exp(- \ii\e)$.
Let $\gamma$ be an analytic chord such that $\gamma=f(\gamma_0)$ for some conformal map $f$ defined on some $A=A_{\epsilon_0}$ for some $\epsilon_0$ such that $\tilde A:=f(A)$ coincides with $\m S^1$ near $x=f(-1)$ and $y=f(1)$. For $\epsilon < \epsilon_0$, set $\tilde{A_\epsilon}=f(A_\epsilon)$.
Let us introduce the neighborhoods of $\gamma_0$ and $\gamma$ given by 
\begin{align*}
    &O_\epsilon(\gamma_0):=\{\text{simple~chords~in~}(A_\epsilon;-1,1)\},\\
    &O_\epsilon(\gamma):=\{\text{simple~chords~in~}(\tilde{A_\epsilon};x,y)\}.
\end{align*}
We call the sets of simple chords of the form $O_\epsilon(\gamma)$ as admissible neighborhoods.
\begin{thm}\label{OM for single chordal}
    Let $\kappa \leq 4$ and $Q_{\m H; x, y}^\kappa$ denote the chordal $\SLE_\k$ measure in $(\m H; x, y)$. For any analytic simple chord $\gamma$ connecting $x$ and $y$ such that $\gamma=f(\gamma_0)$ for some conformal map $f$ defined on some $A=A_{\epsilon_0}$ for some $\epsilon_0$. Defining a collection of admissible neighborhoods $(O_\epsilon(\gamma))_{0 < \epsilon \ll 1}$  as above, we have that 
    \begin{equation}
        \lim_{\epsilon \rightarrow 0} \frac{Q_{\m H; x, y}^\kappa(O_\epsilon(\gamma))}{Q_{\m H; 0, \infty}^\kappa(O_\epsilon(\gamma_0))}=\exp \left( \frac{c(\kappa)}{2}(\mc H(\gamma)-\mc H(\gamma_0))-\frac{3(6-\kappa)}{16} \log\abs{f'(0)f'(\infty)}\right).
    \end{equation}
 \end{thm}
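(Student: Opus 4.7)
My proof proposal follows the general strategy outlined after Theorem~\ref{main theorem}, specialized to the single chordal case. The plan is to rewrite the ratio in the numerator and denominator using the conformal restriction (GCR) and conformal covariance (COV) properties (Lemma~\ref{lem property of single chordal}), take the limit $\epsilon\to 0$ via the uniform convergence of the Brownian loop measure (Lemma~\ref{1uniform convergence}), and finally convert the resulting Brownian loop masses to the Loewner potential difference using Lemma~\ref{conformal deformation of Loewner potential}.

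First, I would apply GCR to express $Q^\kappa_{\m H;x,y}$ as an integral against $Q^\kappa_{\tilde A;x,y}$ restricted to $\tilde A_\epsilon$, with density $Y_{\tilde A,\m H;x,y}(\tilde\eta)^{-1}=\exp(-\tfrac{c(\kappa)}{2}\mc B(\tilde\eta,\m H\setminus\tilde A;\m H))$. Then I would pull back through $f$ using COV, picking up the factor $|f'(0)|^{b(\kappa)}|f'(\infty)|^{b(\kappa)}$ (where $f$ is extended via the Möbius identification of $(\m H;0,\infty)\cong(\m D;-1,1)$), and apply GCR a second time to land on $Q^\kappa_{\m H;0,\infty}$. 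This yields
\begin{equation*}
\frac{Q^\kappa_{\m H;x,y}(O_\epsilon(\gamma))}{Q^\kappa_{\m H;0,\infty}(O_\epsilon(\gamma_0))}
=\frac{\int \mathbbm{1}_{\{\eta\subset A_\epsilon\}}\,\frac{Y_{A,\m H;0,\infty}(\eta)}{Y_{\tilde A,\m H;x,y}(f(\eta))}\,|f'(0)f'(\infty)|^{b(\kappa)}\,\dd Q^\kappa_{\m H;0,\infty}(\eta)}{\int\mathbbm{1}_{\{\eta\subset A_\epsilon\}}\,\dd Q^\kappa_{\m H;0,\infty}(\eta)}.
\end{equation*}

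Second, I would apply Lemma~\ref{1uniform convergence} to both Brownian loop masses appearing in $Y_{A,\m H;0,\infty}(\eta)$ and $Y_{\tilde A,\m H;x,y}(f(\eta))$ (the latter uses that $f$ is conformal on $A$, so restriction to $A_\epsilon$ corresponds to restriction of $f(\eta)$ to $\tilde A_\epsilon$). The uniform convergence allows me to pull the integrand outside the ratio, giving
\begin{equation*}
\lim_{\epsilon\to 0}\frac{Q^\kappa_{\m H;x,y}(O_\epsilon(\gamma))}{Q^\kappa_{\m H;0,\infty}(O_\epsilon(\gamma_0))}
=\exp\!\left(\tfrac{c(\kappa)}{2}\bigl(\mc B(\gamma_0,\m H\setminus A;\m H)-\mc B(\gamma,\m H\setminus\tilde A;\m H)\bigr)-b(\kappa)\log|f'(0)f'(\infty)|\right).
\end{equation*}

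Third, I would invoke Lemma~\ref{conformal deformation of Loewner potential} to substitute
\begin{equation*}
\mc B(\gamma_0,\m H\setminus A;\m H)-\mc B(\gamma,\m H\setminus \tilde A;\m H)=\mc H(\gamma)-\mc H(\gamma_0)-\tfrac{1}{4}\log|f'(0)f'(\infty)|,
\end{equation*}
and then collect the $\log|f'(0)f'(\infty)|$ coefficient as $-\tfrac{c(\kappa)}{8}-b(\kappa)$. Using $b(\kappa)=\tfrac{6-\kappa}{2\kappa}$ and $c(\kappa)=-\tfrac{(6-\kappa)(8-3\kappa)}{2\kappa}$, a short calculation simplifies this to $-\tfrac{3(6-\kappa)}{16}$, yielding the announced formula.

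I do not anticipate a significant obstacle: both Lemma~\ref{1uniform convergence} and Lemma~\ref{conformal deformation of Loewner potential} are already established in the preceding sections, and everything else is bookkeeping. The only point deserving care is the identification between the $\m D$ reference picture (in which $\gamma_0=[-1,1]$ and $A_\epsilon$ are naturally defined) and the $\m H$ measures appearing in the statement; here the Möbius map $z\mapsto(z-\ii)/(z+\ii)$ is used to transport the neighborhoods, and since it is a Möbius automorphism, the conformal covariance factors cancel exactly, so no extra terms are produced.
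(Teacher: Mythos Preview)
Your proposal is correct and follows exactly the same route as the paper's proof: apply GCR to pass from $\m H$ to $\tilde A$, then COV to pull back by $f$, then GCR again to land on $Q^\kappa_{\m H;0,\infty}$, invoke Lemma~\ref{1uniform convergence} for the limit, and finish with Lemma~\ref{conformal deformation of Loewner potential}. One small inconsistency to fix: the covariance step produces $|f'(0)f'(\infty)|^{-b(\kappa)}$ (as in the paper, and as you correctly use in your second display), not the $|f'(0)f'(\infty)|^{+b(\kappa)}$ you wrote in your first displayed ratio.
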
\begin{remark}
     The statement is still true for chordal SLE in other simply connected domains by conformal invariance if we choose the admissible neighborhoods to be the corresponding conformal images. Besides, it is the version of our main theorem when $n=1$.
 \end{remark}
\begin{proof}
    Fix $\epsilon_0$, we set $A=A_{\epsilon_0}$, $\tilde{A}=f(A_{\epsilon_0})$. 
    By conformal restriction and conformal covariance, we have
    \begin{align*}
        Q_{\m H; x, y}^\kappa(O_\epsilon(\gamma))&=\int \mathbbm{1}_{\{\tilde{\eta} \subset \tilde{A_\epsilon}\}} \dd Q_{\m H; x, y}^\kappa (\tilde{\eta}) = \int \mathbbm{1}_{\{\tilde{\eta} \subset \tilde{A_\epsilon}\}} \left(Y_{\tilde{A},\m H;x,y}(\tilde{\eta})\right)^{-1} \dd Q^\kappa_{\tilde{A}; x, y} (\tilde{\eta})\\
        &= \int \mathbbm{1}_{\{\eta \subset A_\epsilon\}} \left(Y_{\tilde{A},\m H;x,y}(f(\eta))\right)^{-1} \abs{f'(0)f'(\infty)}^{-b(\kappa)}\dd Q^\kappa_{A; 0, \infty} (\eta) \\
        &= \int \mathbbm{1}_{\{\eta \subset A_\epsilon\}} \frac{Y_{A, \m H ; 0 ,\infty}(\eta)}{Y_{\tilde{A},\m H;x,y}(f(\eta))} \abs{f'(0)f'(\infty)}^{-b(\kappa)}\dd Q_{\m H; 0, \infty}^\kappa (\eta).
  \end{align*}
        
  We have that 
  \begin{align*}
      &\frac{Y_{A, \m H ; 0 ,\infty}(\eta)}{(Y_{\tilde{A},\m H;x,y}(f(\eta))} \abs{f'(0)f'(\infty)}^{-b(\kappa)}\\
      &= \exp \left( \frac{c(\kappa
      )}{2} ( \mc B(\eta, \m H \backslash A; \m H) -  \mc B(f(\eta), \m H \backslash \tilde{A}; \m H) ) -b(\kappa) \log\abs{f'(0)f'(\infty)} \right) .
  \end{align*}
Using the uniform convergence from Lemma~\ref{1uniform convergence}, we have
\begin{equation*}
    \sup_{\{\eta \subset A_\epsilon\}} \left( \mc B(\eta, \m H \backslash A; \m H) -  \mc B(f(\eta), \m H \backslash \tilde{A}; \m H) \right)  \stackrel{\epsilon \rightarrow 0+}{\longrightarrow} \mc B(\gamma_0, \m H \backslash A; \m H) -  \mc B(\gamma, \m H \backslash \tilde{A}; \m H),
\end{equation*}
\begin{equation*}
    \inf_{\{\eta \subset A_\epsilon\}} \left( \mc B(\eta, \m H \backslash A; \m H) -  \mc B(f(\eta), \m H \backslash \tilde{A}; \m H) \right)  \stackrel{\epsilon \rightarrow 0+}{\longrightarrow} \mc B(\gamma_0, \m H \backslash A; \m H) -  \mc B(\gamma, \m H \backslash \tilde{A}; \m H).
\end{equation*}

Hence we have
\begin{align*}
    \frac{Q_{\m H; x, y}^\kappa(O_\epsilon(\gamma))}{Q_{\m H; 0, \infty}^\kappa(O_\epsilon(\gamma_0))}
    &=\frac{= \int \mathbbm{1}_{\{\eta \subset A_\epsilon\}} \frac{Y_{A, \m H ; 0 ,\infty}(\eta)}{Y_{\tilde{A},\m H;x,y}(f(\eta))} \abs{f'(0)f'(\infty)}^{-b(\kappa)}\dd Q_{\m H; 0, \infty}^\kappa (\eta)}{\int \mathbbm{1}_{\{\eta \subset A_\epsilon\}} \dd Q_{\m H; 0, \infty}^\kappa (\eta)}\\
    &\stackrel{\epsilon \rightarrow 0+}{\longrightarrow}
    \exp \left( \frac{c(\kappa)}{2}(\mc B(\gamma_0, \m H \backslash A; \m H) -  \mc B(\gamma, \m H \backslash \tilde{A}; \m H))-b(\kappa) \log\abs{f'(0)f'(\infty)}\right).
\end{align*}
By the conformal deformation of chordal Loewner potential, items in the above exponent 
\begin{align*}
     &=\frac{c(\kappa)}{2}(\mc H_{\m H;x, y}(\gamma)-\mc H_{\m H;0, \infty}(\gamma_0))-\frac{3(6-\kappa)}{16} \log\abs{f'(0)f'(\infty)}.
\end{align*}
\end{proof}
Now we show that the admissible neighborhoods form a neighborhood basis for the Hausdorff topology.
The Hausdorff distance $d_h$ of two compact sets $K_1$, $K_2 \subset \overline{\m D}$ is defined by 
\begin{equation*}
       d_h(K_1,K_2):=\inf \left\{ \epsilon > 0 \big\vert  K_1 \subset \bigcup_{z \in K_2} \overline{B_\epsilon(z)} ~\text{and}~ K_1 \subset \bigcup_{z \in K_2}\overline{B_\epsilon(z)} \right\},
\end{equation*}
where $B_\epsilon(z)$ denotes the ball of radius $\epsilon$ centered at $z$ in $\overline{\m D}$ with respect to the Euclidean metric. We endow the space $\SC$ of simple chords in $(\m D;-1,1)$ with the relative topology induced by $d_h$. We define the topology on the space of simple chords in $(D;x,y)$ via the pullback by a uniformizing conformal map $f:D \rightarrow \m D$. Although the metric depends on the choice of the conformal map $f$, the topology is canonical, as conformal automorphisms of $\m D$ are fractional linear functions, which are uniformly continuous on $\m D$. For any $\gamma \in \SC$ and $\epsilon > 0$,  let
\begin{equation*}
    B(\gamma,\epsilon)=\{z \in \overline{\m D}\big\vert d(z,\gamma) \leq \epsilon\}
\end{equation*}
and
\begin{equation*}
    B_h(\gamma,\epsilon)=\{\eta \in \SC\big\vert d_h(\eta,\gamma) < \epsilon\},
\end{equation*}
we have
\begin{equation*}
    B_h(\gamma,\epsilon)\subset\{\eta  \in \SC\big\vert \eta \subset B(\gamma,\epsilon)\}=:B^h(\gamma,\epsilon),
\end{equation*}
which is directly from the definition. 
\begin{thm}\label{lem Hausdorff topology single chordal}
    The admissible neighborhoods $O_\e$ form a neighborhood basis for the Hausdorff topology on $\SC$.
\end{thm}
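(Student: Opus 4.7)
The plan is to verify the two defining conditions of a neighborhood basis at the reference chord $\gamma_0$: (a) for each $\e > 0$, $O_\e(\gamma_0)$ contains a Hausdorff-open ball around $\gamma_0$; (b) for each $\delta > 0$, some $O_\e(\gamma_0)$ sits inside the Hausdorff-open ball $B_h(\gamma_0, \delta)$. By the canonical nature of the Hausdorff topology under the conformal transfer $f$ (as explained in the preceding paragraph of the excerpt) and since $f$ extends continuously to the portion of $\partial A$ near the endpoints by the analyticity there, it suffices to treat $\gamma_0 = [-1,1]$ in $(\m D; -1, 1)$ with $A_\e$ the region bounded by the two hyperbolic geodesics joining $\pm e^{\pm \ii \e}$.

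Condition (b) should fall out directly from the fact that $A_\e \downarrow [-1,1]$ in Hausdorff distance: since $A_\e \subset B(\gamma_0, \delta(\e))$ for some $\delta(\e) \to 0$ and $\gamma_0 \subset A_\e$, any $\eta \in O_\e(\gamma_0)$ is forced to have $d_h(\eta, \gamma_0) \le \delta(\e)$, giving $O_\e(\gamma_0) \subset B_h(\gamma_0, 2\delta(\e))$. For condition (a), I would fix $\e > 0$ and use the defining property that $\partial A_\e$ coincides with $\partial \m D$ in Euclidean neighborhoods $U_\pm$ of the endpoints $\pm 1$, so that $A_\e \cap U_\pm = \m D \cap U_\pm$. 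The set $K := [-1,1] \setminus (U_+ \cup U_-)$ is a compact subset of the open set $A_\e \cap \m D$ and therefore lies at positive Euclidean distance $\rho > 0$ from $\partial A_\e \cap \m D$. Choosing $\delta$ smaller than $\rho$ and small enough that a $\delta$-Euclidean neighborhood of $[-1,1] \cap U_\pm$ still lies in $U_\pm$, any $\eta \in \SC$ with $d_h(\eta, \gamma_0) < \delta$ consists of points $z \in \overline{\m D}$ each within $\delta$ of some $w \in [-1,1]$; if $w \in K$ then $z \in A_\e$ by the choice of $\rho$, and if $w \in [-1,1] \cap U_\pm$ then $z \in U_\pm \cap \m D = A_\e \cap U_\pm$. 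Either way $\eta \subset A_\e$, yielding $B_h(\gamma_0, \delta) \subset O_\e(\gamma_0)$.

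The main technical obstacle is the behavior near the endpoints $\pm 1$: since Hausdorff-close chords may approach $\partial \m D$ tangentially, a straight Euclidean tubular neighborhood of $\gamma_0$ is not contained in $A_\e$, and one cannot hope to control $\eta$ uniformly all the way to the tips by a single Euclidean radius. The essential ingredient that rescues the argument is the defining feature of the admissible neighborhoods that $\partial A_\e$ coincides with $\partial \m D$ in neighborhoods of the endpoints, which forces chord points near $\pm 1$ to lie inside $A_\e$ automatically and localizes the quantitative control to the compact set $K$ bounded away from $\partial \m D$.
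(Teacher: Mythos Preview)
Your argument establishes only that $\{O_\e(\gamma_0)\}_{\e>0}$ is a local basis at the single point $\gamma_0$, whereas the paper proves the stronger assertion that the full collection of admissible neighborhoods $\{O_\e(\gamma)\}$ is a \emph{basis for the Hausdorff topology} on $\SC$: every Hausdorff-open set is a union of admissible neighborhoods. The difference matters because admissible neighborhoods are only defined around chords $\gamma$ that are locally conformally equivalent to $\gamma_0$ (hence analytic), while a generic $\eta\in\SC$ need not be analytic. Your reduction ``by conformal transfer it suffices to treat $\gamma_0$'' gives at best a local basis at each analytic $\gamma$, but says nothing about a non-analytic $\eta$ sitting inside an open set $O_h$. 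The paper handles this by a uniformization step: given $\eta\in O_h$, it builds a thin two-sided collar $F_{\delta,\tilde\delta}$ around $\eta$ inside $O_h$, uniformizes $F_{\delta,\tilde\delta}$ conformally onto some standard $A_\e$, and takes $\gamma_\eta$ to be the image of $\gamma_0$ under the inverse map; then $O_{\e_\eta}(\gamma_\eta)$ is an admissible neighborhood (centered at an analytic chord, not at $\eta$) with $\eta\in O_{\e_\eta}(\gamma_\eta)\subset O_h$. This construction is the missing idea in your approach.

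There is also a gap in your part~(b). From $\eta\subset A_\e\subset B(\gamma_0,\delta(\e))$ you only get that every point of $\eta$ is within $\delta(\e)$ of $\gamma_0$; the inclusion $\gamma_0\subset A_\e$ does not give the reverse inequality. To conclude $d_h(\eta,\gamma_0)$ is small you must also show every $x\in[-1,1]$ is close to $\eta$, and this genuinely uses that $\eta$ is a chord joining $-1$ to $1$: since $t\mapsto\Re\eta(t)$ is continuous and runs from $-1$ to $1$, for each $x\in[-1,1]$ some point of $\eta$ has real part $x$, and as that point lies in the thin set $A_\e$ it is close to $x$. Without this connectedness argument the bound $d_h(\eta,\gamma_0)\le\delta(\e)$ is unjustified.
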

\begin{proof}
    We firstly show that each $O_\e$ can be represented as a union of Hausdorff open sets. For each $\eta \in O_\e $, we have $\eta\subset A_\e$, hence there exist some $\d$ such that $B(\eta,\d) \subset A_\e$, hence we have $\eta \in B_h(\eta,\d) \subset B^h(\eta,\d) \subset O_\e$.It follows that $O_\e=\cup_{\eta \in O_\e}B_h(\eta,\d)$. In particular, $O_\e$ is an open set.

    Then, we show that each open set $O_h$ for the Hausdorff topology on $\SC$ can be represented as a union of admissible neighborhoods. For $\eta \in O_h$, $\O$ and $\O^*$ is the connected component of $\m D \backslash \eta$ while $f$ (or $g$) is the conformal map from the upper (or lower) half disk to $\O$ (or $\O^*$), respectively. Let $\eta^\d=f(L_\d)$ and $\eta_{\tilde\d}=g(R_{\tilde\d})$ for $\d$, ${\tilde \d} >0$, where $L_\d$ and $R_\d$ are the connected component of the boundary of $A_\d$ except $\partial \m D$ for the standard neighborhood. Let $F_{\d,\tilde\d}$ denote the doubly connected domain in $\m D$ bounded by $\eta^\d$ and $\eta_{\tilde\d}$, which is a simply connected domain in $\m C$. For small enough size $\d$ and $\tilde\d$, all chords contained in $F_{\d,\tilde\d}$ are in $O_h$. By the uniformization theorem, there exists $\e_0$ and a conformal map $G: A_{\e} \rightarrow F_{\d,\tilde\d}$ that fixes the boundary points $-1$ and $1$. Due to the compactness of $G^{-1}(\eta)$, we have that there exists $\e_\eta < \e$ such that $G^{-1}(\eta)\subset A_{\e_\g}$. Denote $\g_\eta=G(\g_0)$, we have $\eta \in O_{\e_\eta}(\g_{\eta})\subset O_{\e_0}(\g)\subset O_h$. It follows that $O_h=\cup_{\eta \in O_h}O_{\e_\eta}(\g_{\eta})$.
\end{proof}
    \begin{proof}[Proof of Lemma~\ref{lem Hausdorff topology}]
    The proof of Lemma~\ref{lem Hausdorff topology} in other cases is essentially the same except a few changes, which we will explain after we introduce admissible neighborhoods in the corresponding chapter.
\end{proof}
Though the range of $F_\k(\g)$ is not used in this paper, we discuss the chordal case here for those interested. 
\begin{lem}\label{lem arbitary choice of F}
      For fixed $\k\in (0,4]$, a chord $\g$ in $\m H$ connecting $-1$ and $1$, the range of $F_\k(\g)=-\frac{3(6-\kappa)}{16} \log\abs{f'(-1)f'(1)}$ is $\emptyset$ or $\m R$ for all conformal maps $f:A\rightarrow \tilde A$ that satisfy $f(\m S^1 \cap \m H)=\g$, where $A$ and $\tilde A$ are neighborhoods that belong to $\m H$ and their boundaries coincide with $\m R$ near $-1$ and $1$. In particular, the range is $\m R$ when $\g$ is locally conformally equivalent to $\m S^1 \cap \m H$.
  \end{lem}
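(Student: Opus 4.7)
Since the coefficient $-3(6-\kappa)/16$ is nonzero for every $\kappa \in (0,4]$, the range of $F_\kappa(\gamma)$ equals, up to a nonzero scalar, the range of $\log|f'(-1)f'(1)|$, so it is enough to study the latter. If no admissible conformal map $f$ with $f(\m S^1 \cap \m H) = \gamma$ exists, the range is empty by definition. Otherwise, fixing one such $f_0 : A_0 \to \tilde{A}_0$, the plan is to construct, for each prescribed $c > 0$, a self-conformal-map $\phi : A' \to \tilde{A}'$ between admissible neighborhoods of $\m S^1 \cap \m H$ satisfying $\phi(\m S^1 \cap \m H) = \m S^1 \cap \m H$, $\phi(\pm 1) = \pm 1$, preserving $\m R$ locally near $\pm 1$, and $|\phi'(-1)\phi'(1)| = c$. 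Then $f := f_0 \circ \phi$ (on a suitably small subdomain of $A'$) is again an admissible map with $f(\m S^1 \cap \m H) = \gamma$ and $|f'(-1)f'(1)| = c\,|f_0'(-1)f_0'(1)|$, so letting $c$ vary over $(0,\infty)$ makes $\log|f'(-1)f'(1)|$ take every value in $\m R$.

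To construct $\phi$, I would parametrize $\m S^1 \cap \m H$ by $\theta \in [0,\pi]$ via $\theta \mapsto e^{i\theta}$ and set $\phi(e^{i\theta}) := e^{i\psi(\theta)}$ for a real-analytic diffeomorphism $\psi : [0,\pi] \to [0,\pi]$ fixing endpoints with $\psi' > 0$. Real-analyticity of $\psi$ together with Schwarz reflection across $\m S^1$ extend $\phi$ to a holomorphic function on a thin complex neighborhood of $\m S^1 \cap \m H$; restricting to $\m H$ and choosing a thin enough corridor gives a conformal map $\phi : A' \to \tilde{A}'$ between admissible neighborhoods. Writing $z = e^{i\theta}$, an elementary computation yields $\phi'(1) = \psi'(0)$ and $\phi'(-1) = \psi'(\pi)$, both real and positive. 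The remaining condition that $\phi$ maps $\m R$ into $\m R$ locally near $\pm 1$ should translate, after writing $z = 1 + x$ for real small $x$ and expanding $\phi(z) = \exp(i\psi(-i\log z))$, into the requirement that the Taylor expansion of $\psi$ at $\theta = 0$ contains only odd powers of $\theta$ (and analogously in $\pi - \theta$ at $\theta = \pi$): the even-order Taylor coefficients of $\psi$ are precisely what would produce an imaginary contribution to $\phi(z)$ on $\m R$, and their vanishing is both necessary and sufficient.

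To exhaust all positive values of $|\phi'(-1)\phi'(1)|$, I would use the explicit family
\[
\psi_\epsilon(\theta) := \theta + \tfrac{\epsilon}{2}\sin(2\theta), \qquad \epsilon \in (-1,1),
\]
which fixes the endpoints, is strictly increasing since $\psi_\epsilon'(\theta) = 1 + \epsilon\cos(2\theta) > 0$, and satisfies the odd-parity condition at both endpoints because $\sin(2\theta)$ is odd about $\theta = 0$ and about $\theta = \pi$. One computes $\psi_\epsilon'(0) = \psi_\epsilon'(\pi) = 1 + \epsilon$, and the iterate $\psi_\epsilon^{\circ n}$ preserves real-analyticity, endpoint fixing, strict monotonicity, and odd-parity at the endpoints (all of which are closed under composition), with product $(\psi_\epsilon^{\circ n})'(0)\,(\psi_\epsilon^{\circ n})'(\pi) = (1+\epsilon)^{2n}$. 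As $\epsilon \in (-1,1)$ and $n \in \m N$ vary, this sweeps out $\bigcup_{n \geq 1}(0, 4^n) = (0,\infty)$, producing a $\phi$ with $|\phi'(-1)\phi'(1)|$ equal to any prescribed positive value.

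The one place requiring real care is the equivalence between the odd-parity Taylor condition on $\psi$ at $\theta = 0, \pi$ and the local preservation of $\m R$ by the holomorphic extension of $\phi$; once that computation is in place, the explicit family $\psi_\epsilon$ handles the exhaustion of $(0,\infty)$ routinely, and I do not anticipate further obstacles.
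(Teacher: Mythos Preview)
Your argument is correct and takes a genuinely different, more explicit route than the paper. The paper observes that the set $\K$ of achievable values of $\exp(F_\kappa)$ is a multiplicative subgroup of $(0,\infty)$, then invokes the measurable Riemann mapping theorem: by choosing compactly supported Beltrami coefficients symmetric under $z\mapsto \bar z$ and supported away from $\m S^1$, one obtains quasiconformal self-maps that are conformal near the arc and preserve it; continuous dependence on the Beltrami data then shows $\K$ is connected and nontrivial, hence all of $(0,\infty)$. Your approach avoids quasiconformal machinery entirely by writing down the explicit one-parameter family $\psi_\epsilon(\theta)=\theta+\tfrac{\epsilon}{2}\sin 2\theta$ and iterating. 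This is more elementary and self-contained; the paper's method, on the other hand, generalizes more readily to situations where an explicit parametrization of boundary-preserving deformations is unavailable.

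One small terminological correction: the holomorphic extension of $\phi$ to a neighborhood of the arc comes directly from the real-analyticity of $\psi$ (via $\phi(z)=\exp\bigl(i\psi(-i\log z)\bigr)$ with a local branch of $\log$), not from Schwarz reflection across $\m S^1$; reflection would be the tool if you had $\phi$ on one side of the circle and wanted to extend across, which is not the situation here. Your odd-parity condition is exactly what guarantees that this extension maps $\m R$ to $\m R$ near $\pm 1$, and your verification that composition preserves the odd-parity at both endpoints is correct. With that clarification the proof goes through cleanly.
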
  
\begin{proof}
         For $0<r<1$, let $A_r:=\{z\in\m C|1/r<\abs z <r\}$ denote the centered annulus. Let us define the set
\begin{equation*}
    \K:=\left\{\exp(F_\k(\m S^1)) ~\Big|~ \exists r,\exists f:A_r \xrightarrow [\text{symmetric}]{\text{conformal}} \tilde{A}_r \text{ s.t. } \m S^1=f(\m S^1)\right\},
\end{equation*}
where ``symmetric'' means that $\a \circ f \circ \a^{-1}=f$ for the reflection $\a$ with respect to the real line $\m R$.

The composition and inverse of the conformal map correspond to the multiply and inverse of $\K$, which shows that $\K\ni 1$ is a multiplicative group. We claim $\K=(0,\infty)$ then we prove the lemma for any chord that is locally conformally equivalent to $\m S^1 \cap \m H$ using composition by $f$. We select compactly supported and symmetric Beltrami coefficients with respect to the real line so that the associated quasiconformal map $\o $ is symmetric with respect to the real line and conformal close to $\m S^1$ with $\o(\m S^1)=\m S^1$. It is known that the associated quasiconformal maps depend locally holomorphically (hence continuously) on their Beltrami coefficients. A linear combination of trigonometric and Polynomial Functions gives a positive number different from $1$ that belongs to $\K_{\m S^1}$ and we finish the proof.
\end{proof}
\subsection{Multiple case}
 For any positive number $n$, we fix an $n$-link pattern $\alpha$ and boundary points $x_1 < \dots <x_{2n}$, we denote the curve space by $\mc X_\alpha(\bar{x}) :=\mc X_\alpha(\m H; x_1, \ldots,x_{2n})$. Multi-chordal $\SLE_\kappa$ in $\m H$ is a family of  $\SLE_\kappa$ curves $\bar{\gamma}:=(\gamma_1,\ldots,\gamma_n)$ with interaction, which is the space of $n$ joint chords in $\m H$ connecting the marked boundary points $(x_{a_j},x_{b_j})_{j=1}^n$ according to the link pattern $\alpha$. From \cite{BPW,KL07}, the multi-chordal $\SLE_\kappa$ measure $Q_{\bar{x},\alpha}^\kappa$ can be obtained by weighting $n$ independent chordal $\SLE_\kappa$ measure  $(Q_{\m H; x_{a_j},x_{b_j}})_{j=1}^n$ with the Radon--Nikodym derivative 
 \begin{equation*}
     \exp \left( \frac{c(\kappa)}{2} \mc B(\bar{\gamma};\m H) \right),
 \end{equation*}
 where 
 \begin{equation*}
     \mc B(\bar{\gamma};\m H):= \int \max\left( \#\{\text{sets~hit~by~} \cdot\}-1 ,0\right) \dd \mu_{\m H}^{\textbf{BL}}(\cdot)
 \end{equation*}
 equals $0$ if $n=1$. So $ Q_{\bar{x},\alpha}^\kappa$ is the partition function times the standard probability measure of multi-chordal SLE. 
In \cite{PW24_LDP_multichordal}, the multi-chordal Loewner potential of $\bar{\gamma} \in \mc X_\alpha$ is defined by
 \begin{align}\label{eq multiple chordal potential}
     \mc H(\bar{\gamma}):&= \frac{1}{12}\sum_{j=1}^n I_{\m H; x_{a_j},x_{b_j}}(\gamma_j) + \mc B(\bar{\gamma};\m H)-\frac{1}{4}\sum_{j=1}^n\log\abs{x_{a_j}-x_{b_j}}^{-2}\\
     &= \sum_{j=1}^n \mc H(\gamma_j) + \mc B(\bar{\gamma};\m H),
 \end{align}
whose infimum $\mc M_{\m H}^\alpha(x_1, \ldots, x_{2n})$ over $\bar{\gamma} \in \mc X_\alpha(\bar{x})$ exists and is unique.
Next, we show the uniform convergence of $\mc B(\bar{\eta};\m H)$.
\begin{lem}\label{2uniform convergence}
    Let $(\bar A_\e=(A_\epsilon^j)_{j=1}^n \subset (A^j)_{j=1}^n)_{\epsilon>0}$ be a decreasing family of $n$ disjoint simply connected neighborhoods of $n$ simple chords $\bar{\g}$ in $\m H$, which agree with $\partial \m H$ in the neighborhoods of boundary points. Define
    \begin{align*}
        O_\epsilon(\bar{\gamma}):=\{\bar{\eta}=(\eta_1\ldots,\eta_n)~\big\vert ~\eta_j ~\text{is a simple~chord~in~}\tilde{A}_\epsilon^j, \forall j\}
    \end{align*}
    Assume that $A_\epsilon \downarrow \bar{\g}$ as $\epsilon\downarrow 0$, then we have the uniform convergence as follows:
\begin{equation*}
    \sup_{\{\bar{\eta} \in  O_\epsilon(\bar{\gamma})\}}  \mc B(\bar{\eta};\m H), \inf_{\{\bar{\eta} \in  O_\epsilon(\bar{\gamma})\}}  \mc B(\bar{\eta};\m H)   \stackrel{\epsilon \rightarrow 0+}{\longrightarrow} \mc B(\bar{\g};\m H),
\end{equation*}
\end{lem}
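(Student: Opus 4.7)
The plan is to follow the template of Lemma~\ref{1uniform convergence}, now applied pointwise to the integrand $\phi(\bar\eta,\omega):=(\#\{j:\omega\cap\eta_j\neq\emptyset\}-1)^+$ whose integral against $\mu^{\mathbf{BL}}_{\m H}$ defines $\mc B(\bar\eta;\m H)$. Since $\bar\gamma\in O_\e(\bar\gamma)$ for every $\e>0$, the trivial inequalities $\inf_{\bar\eta\in O_\e}\mc B(\bar\eta;\m H)\le\mc B(\bar\gamma;\m H)\le\sup_{\bar\eta\in O_\e}\mc B(\bar\eta;\m H)$ hold, so it suffices to produce matching estimates in the opposite direction that both tend to $\mc B(\bar\gamma;\m H)$.

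For the supremum, the inclusion $\eta_j\subset A^j_\e$ gives $\phi(\bar\eta,\omega)\le\phi(\bar A_\e,\omega)$ pointwise, where the right-hand side has the $A^j_\e$'s playing the role of the chords. Monotone convergence as $A^j_\e\downarrow\gamma_j$ then drives $\int\phi(\bar A_\e,\cdot)\,d\mu^{\mathbf{BL}}_{\m H}\downarrow\mc B(\bar\gamma;\m H)$; integrability of the dominator $\phi(\bar A_{\e_0},\cdot)$ is guaranteed because the $\bar A^j_{\e_0}$ are pairwise disjoint compact sets in $\m H$, so the mass of Brownian loops hitting at least two of them is finite.

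For the infimum, let $L^j_\e,R^j_\e$ be the two arcs of $\partial A^j_\e\cap\m H$. Each chord $\eta_j\subset A^j_\e$ together with an arc of $\partial\m H$ bounds a Jordan domain in $\m H$ and therefore separates $L^j_\e$ from $R^j_\e$; consequently, any continuous sub-arc of a loop $\omega$ joining a point of $L^j_\e$ to a point of $R^j_\e$ must cross $\eta_j$. Setting
\[
\tilde S^\e(\omega):=\{j:\omega\cap L^j_\e\neq\emptyset\text{ and }\omega\cap R^j_\e\neq\emptyset\},
\]
one has $\tilde S^\e(\omega)\subseteq\{j:\omega\cap\eta_j\neq\emptyset\}$ for every $\bar\eta\in O_\e$, hence $\int(|\tilde S^\e|-1)^+\,d\mu^{\mathbf{BL}}_{\m H}\le\inf_{\bar\eta\in O_\e}\mc B(\bar\eta;\m H)$. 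I then pass to the limit by dominated convergence (with $\phi(\bar A_{\e_0},\cdot)$ again as dominator): for $j$ with $\omega\cap\gamma_j=\emptyset$, compactness forces $\omega\cap A^j_\e=\emptyset$ for small $\e$; for $j$ with $\omega\cap\gamma_j\neq\emptyset$, I will invoke the standard property that $\mu^{\mathbf{BL}}_{\m H}$-a.e.\ such loop crosses $\gamma_j$ transversally and extends strictly into both components of $\m H\setminus\gamma_j$ at positive distance from $\gamma_j$ (Blumenthal's $0$--$1$ law together with neighborhood recurrence of planar Brownian motion), so that for $\e$ smaller than that distance the thin tube $A^j_\e$ cannot contain these extensions and $\omega$ must cross both $L^j_\e$ and $R^j_\e$.

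The main obstacle, and what distinguishes this from the purely topological single-chord argument, is the $\mu$-a.e.\ pointwise convergence $|\tilde S^\e(\omega)|\to\#\{j:\omega\cap\gamma_j\neq\emptyset\}$ in the last step: one must rule out Brownian loops that hit $\gamma_j$ only via small excursions confined to $A^j_\e$. This is handled by restricting to the finite sub-measure of loops intersecting at least two of the $\bar A^j_{\e_0}$, where $\mu^{\mathbf{BL}}_{\m H}$ behaves as a finite measure and standard two-dimensional Brownian path regularity applies.
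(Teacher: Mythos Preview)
Your supremum argument coincides with the paper's. For the infimum, however, you take a genuinely different route. The paper proceeds by induction on $n$: it uses the decomposition
\[
\mc B(\bar\eta;\m H)=\mc B(\bar\eta\setminus\eta_j;\m H)+\mc B(\eta_j,\m H\setminus\m H_j;\m H),
\]
picks $j$ to be an \emph{outermost} chord (so that all other chords lie on one side of $\eta_j$), and then lower-bounds $\mc B(\eta_j,\cup_{k\in E_j}\eta_k;\m H)$ by $\mc B(N^j_\e,\cup_{k\in E_j}N^k_\e;\m H)$, where $N^j_\e,N^k_\e$ are the \emph{far-side} boundary arcs of the respective tubes. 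A loop hitting $N^j_\e$ and some $N^k_\e$ is forced, purely topologically, to cross both $\eta_j$ and $\eta_k$; monotone convergence then finishes the job. This argument is entirely deterministic and needs no path-regularity input.

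Your approach instead works directly with the integrand, using \emph{both} boundary arcs $L^j_\e,R^j_\e$ of every tube simultaneously and invoking the almost-sure property that a Brownian loop touching $\gamma_j$ penetrates both components of $\m H\setminus\gamma_j$. This is correct and avoids the induction, which is a genuine simplification. The cost is the probabilistic crossing fact: your justification via Blumenthal's $0$--$1$ law and neighbourhood recurrence is the right idea, but note that it has to be made precise for the (infinite, non-probability) loop measure and for chords $\gamma_j$ that are merely simple arcs with no assumed smoothness. One clean way is to observe that $\mu^{\mathbf{BL}}_{\m H}\{\omega:\omega\subset\overline{\Omega_R},\ \omega\cap\gamma_j\neq\emptyset\}=0$ because, disintegrating into Brownian bridges and applying the strong Markov property at the first hitting time of $\gamma_j$, the post-hitting path is locally absolutely continuous with respect to planar Brownian motion started on $\gamma_j$, which enters the open set $\Omega_L$ immediately. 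The paper sidesteps all of this by its outermost-chord trick.
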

\begin{proof}
For the supremum, it follows from the observation that 
    \begin{equation*}
        \mc B(\bar\g; \m H) \leq \sup_{\{\bar\eta \in O_\epsilon(\bar\g)\}}  \mc B(\bar\eta; \m H)\leq \mc B(\bar A_\e; \m H)\rightarrow \mc B(\bar\gamma; \m H).
    \end{equation*}
    The inequality follows from the containment of sets, while the limit comes from monotone convergence. 
    Now let us focus on the infimum.
For $n=1$, the result is trivial. Using induction, we have that for each $j \in \{1,\ldots,n\}$,  
 \begin{equation*}
     \mc B(\bar{\eta};\m H)=\mc B(\bar{\eta}\backslash \eta_j;\m H)+\mc B(\eta_j,\m H\backslash\m H_j;\m H),
 \end{equation*}
where $\bar{\eta}\backslash \eta_j$ denotes the other $n-1$ chords except the chord $\eta_j$ and $\m H_j$ denotes the connected component of $\m H\backslash \cup_ {k \neq j } \eta_k$ containing the chord $\eta_j$. So it suffices to consider the converge of $\mc B(\eta_j,\m H\backslash\m H_j;\m H)$. There exists $j$ such that the other chords are on the same side of $\eta_j$. Assume the boundary of $\m H_j$ consists of part of $\partial \m H$ and $\cup_ {k \in E_j } \eta_k$, where $E_j$ is an index set. Fix this choice, we have $\mc B(\eta_j,\m H\backslash\m H_j;\m H)=\mc B(\eta_j,\cup_ {k \in E_j }\eta_k;\m H)$.
For $k \in E_j $, let $N_\e^j$ and $N_\e^k$ denote the other connected components of the boundary of $A_\e^j$ and $A_\e^k$ except $\partial \m H$ that are not in the middle of $\g_j$ and $\g_k$, respectively. 
Then we have
    \begin{align*}
        \mc B(\eta_j,\cup_ {k \in E_j }\eta_k;\m H)
        \geq \mc B(N_\e^j, \cup_ {k \in E_j }N_\e^k; \m H)
        \stackrel{\epsilon \rightarrow 0+}{\longrightarrow} \mc B(\g_j, \cup_ {k \in E_j }\g_k; \m H).
    \end{align*}
The inequality follows from the containment of sets, while the limit comes from monotone convergence. Note that $\mc B(\g_j, \cup_ {k \in E_j }\g_k; \m H)=\mc B(\g_j,\m H\backslash\m H_j^\prime;\m H)$, where $\m H_j^\prime$ denotes the connected component of $\m H\backslash \cup_ {k \neq j } \g_k$ containing the chord $\g_j$. Therefore, we have 
    \begin{equation*}
     \inf_{\{\bar{\eta} \in  O_\epsilon(\bar{\gamma})\}}  \mc B(\bar{\eta};\m H)   \stackrel{\epsilon \rightarrow 0+}{\longrightarrow}\mc B(\bar{\g}\backslash \g_j;\m H)+\mc B(\g_j,\m H\backslash\m H_j^\prime;\m H)= \mc B(\bar{\g};\m H).
\end{equation*}
\end{proof}
\begin{remark}
    Note that using $\mc B(\eta_j,\m H\backslash\m H_j;\m H) =\mc B(\bar{\eta};\m H)-\mc B(\bar{\eta}\backslash \eta_j;\m H)$ and the above lemma, we have $\mc B(\eta_j,\m H\backslash\m H_j;\m H)$ uniformly converges for each $j$. It can be proved directly using the inclusion-exclusion principle on the two sides of $\eta_j$. 
\end{remark}
Recall that, we identify $(\m H;0,\infty)$ and $(\m D;-1,1)$ using the map $z \rightarrow (z-\ii)/(z+\ii)$. Let us choose $\bar{x}=(x_1, \ldots,x_{2n})$ to be the image of $2n$-th root of unity $(e^{\ii\pi  m/n})_{1\leq m \leq 2n}$, $\a$ to be the link pattern connecting the two next to each other in order. Let us choose $\bar{\g}_0^\a$ to be the $n$ geodesic chords, which are semi-circles. For each $j\in \{1,\ldots,n\}$, set $A_\epsilon^j$ to be the image of $A_\e$ introduced in the single case under a \Mo map fixing the real line. Note that the choice influences the neighborhood but does not influence our theorem.
 For any analytic simple chords $\bar{\gamma}=(\gamma_1,\ldots,\gamma_n)$ such that for each $j\in \{1,\ldots,n\}$, $\gamma_j=f_j(\gamma_{0,j}^\alpha)$ for some conformal map $f_j:(A_{\epsilon_0}^j;x_{2j-1},x_{2j}) \rightarrow  (\tilde{A}_{\epsilon_0}^j \tilde{x}_{a_j},\tilde{x}_{b_j})$ for some $\epsilon_0$, define $\tilde{A}_\epsilon^j:=f_j({A}_\epsilon^j)$ for $\e <\e_0$.  Let $\b$ denote the link pattern connecting $\tilde{x}_{a_j}$ and $\tilde{x}_{b_j}$. When $\e$ is small enough, the neighborhoods for different $j$ do not touch each other. And we define 
 \begin{align*}
   O_\epsilon^j(\bar{\gamma}_{0}^\alpha):&=\{\eta_j~\big\vert ~\eta_j ~\text{is a simple~chord~in~}(A_\epsilon^j;x_{2j-1},x_{2j})\},\\
    O_\epsilon^j(\bar{\gamma} ):&=\{\eta_j~\big\vert ~\eta_j ~\text{is a simple~chord~in~}(\tilde{A}_\epsilon^j;\tilde{x}_{a_j},\tilde{x}_{b_j})\}\\
    O_\epsilon(\bar{\gamma}_0^\alpha):&=\{\bar{\eta}=(\eta_1\ldots,\eta_n)~\big\vert ~\eta_j \in  O_\epsilon^j(\bar{\gamma}_{0}^\alpha), \forall j\}\\
    &=\{\bar{\eta}=(\eta_1\ldots,\eta_n)~\big\vert ~\eta_j ~\text{is a simple~chord~in~}(A_\epsilon^j;x_{2j-1},x_{2j}),\forall j\},\\
    O_\epsilon(\bar{\gamma}):&=\{\bar{\eta}=(\eta_1\ldots,\eta_n)~\big\vert ~\eta_j \in  O_\epsilon^j(\bar{\gamma} ), \forall j\}\\
    &=\{\bar{\eta}=(\eta_1\ldots,\eta_n)~\big\vert ~\eta_j ~\text{is a simple~chord~in~}(\tilde{A}_\epsilon^j;\tilde{x}_{a_j},\tilde{x}_{b_j}), \forall j\}.
\end{align*}
We call the sets of simple multi-chords of the form $O_\epsilon(\gamma)$ as admissible neighborhoods.
In this case, Lemma~\ref{lem Hausdorff topology} holds from the construction by the product.
\begin{thm}\label{OM for multiple chordal}
    Let $\kappa \leq 4$ and $ Q_{\bar{x},\alpha}^\kappa$ denote the multi-chordal $\SLE_\k$ measure with the link pattern $\a$ in $(\m H; \bar{x})$, where $\bar{x}=(x_1, \ldots,x_{2n})$ are the boundary points of $\bar{\gamma}_0^\alpha$. For any analytic simple chords $\bar{\gamma}=(\gamma_1,\ldots,\gamma_n)$ with boundary points $\tilde{\bar{x}}$ and a pattern $\b$ such that for each $j\in \{1,\ldots,n\}$, $\gamma_j=f_j(\gamma_{0,j}^\alpha)$ for some conformal maps $f_j$ defined on some $A^j=A_{\epsilon_0}^j$ for some $\epsilon_0$, defining a collection of admissible neighborhoods $(O_\epsilon(\bar{\gamma}))_{0 < \epsilon \ll 1}$  as above, we have that 
    \begin{equation}
        \lim_{\epsilon \rightarrow 0} \frac{Q_{\tilde{\bar{x}},\b}^\kappa(O_\epsilon(\bar{\gamma}))}{Q_{\bar{x},\alpha}^\kappa(O_\epsilon(\bar{\gamma}_0^\alpha))}=\exp \left( \frac{c(\kappa)}{2} (\mc H(\bar{\gamma}) -\mc H(\bar{\gamma}_0^\alpha))
        -\frac{3(6-\kappa)}{16} \sum_{j=1}^n \log\abs{f_j'(x_{2j-1})f_j'(x_{2j})}\right).
    \end{equation}
\end{thm}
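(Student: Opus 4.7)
The proof follows the template of Theorem~\ref{OM for single chordal}, with the extra interaction term handled by Lemma~\ref{2uniform convergence} and the conformal deformation of the multichordal Loewner potential assembled from the single chordal Lemma~\ref{conformal deformation of Loewner potential}. First, I would use the definition of the multi-chordal $\SLE_\k$ measure to write
\begin{equation*}
\dd Q_{\tilde{\bar x}, \b}^\k(\bar{\tilde\eta}) = \exp\!\left(\frac{c(\k)}{2}\mc B(\bar{\tilde\eta}; \m H)\right) \prod_{j=1}^n \dd Q^\k_{\m H; \tilde x_{a_j}, \tilde x_{b_j}}(\tilde\eta_j),
\end{equation*}
and similarly for $Q_{\bar x, \a}^\k$ using the chords with reference endpoints. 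Thus $Q_{\tilde{\bar x}, \b}^\k(O_\e(\bar\g))$ is the integral over $O_\e(\bar\g)$ of the Brownian loop weight against a product of single-chord measures, and similarly in the denominator.

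Next, for each fixed $j$, I would apply the single-chord GCR and COV of Lemma~\ref{lem property of single chordal} in the same three steps as in the single chordal proof: GCR transfers $Q^\k_{\m H;\tilde x_{a_j},\tilde x_{b_j}}$ to $Q^\k_{\tilde A^j;\tilde x_{a_j},\tilde x_{b_j}}$, COV along $f_j$ pulls this back to $Q^\k_{A^j; x_{2j-1}, x_{2j}}$ generating a factor $|f_j'(x_{2j-1})f_j'(x_{2j})|^{-b(\k)}$, and a second GCR opens the domain to $\m H$. Performing all $n$ manipulations simultaneously (they commute because the chordal marginals are independent in the reference product measure), the numerator becomes
\begin{equation*}
\int \mathbbm{1}_{O_\e(\bar\g_0^\a)} \exp\!\left(\frac{c(\k)}{2}\mc B(f(\bar\eta); \m H)\right)\prod_{j=1}^n \frac{Y_{A^j, \m H; x_{2j-1}, x_{2j}}(\eta_j)}{Y_{\tilde A^j, \m H; \tilde x_{a_j}, \tilde x_{b_j}}(f_j(\eta_j))} |f_j'(x_{2j-1})f_j'(x_{2j})|^{-b(\k)}\,\dd Q^\k_{\m H; x_{2j-1}, x_{2j}}(\eta_j),
\end{equation*}
where $f(\bar\eta)=(f_1(\eta_1),\dots,f_n(\eta_n))$, while the denominator reduces to the integral of $\exp(c(\k)\mc B(\bar\eta;\m H)/2)$ against the same product reference measure. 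Substituting the explicit form of $Y$ from Lemma~\ref{lem property of single chordal}, the integrand in the numerator/denominator ratio becomes bounded in terms of a finite collection of Brownian loop quantities plus a deterministic $f_j'$ factor.

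Now dividing and letting $\e \to 0$, Lemma~\ref{1uniform convergence} provides uniform convergence of each single-chord term $\mc B(\eta_j, \m H\setminus A^j; \m H)$ and $\mc B(f_j(\eta_j), \m H\setminus \tilde A^j; \m H)$ on the support, while Lemma~\ref{2uniform convergence} does the same for the interaction terms $\mc B(\bar\eta;\m H)$ and $\mc B(f(\bar\eta);\m H)$. Hence the limit ratio equals
\begin{equation*}
\exp\!\left(\frac{c(\k)}{2}\Bigl[\mc B(\bar\g;\m H)-\mc B(\bar\g_0^\a;\m H)+\sum_{j=1}^n\bigl(\mc B(\g_{0,j}^\a,\m H\setminus A^j;\m H)-\mc B(\g_j,\m H\setminus \tilde A^j;\m H)\bigr)\Bigr]-b(\k)\sum_{j=1}^n\log|f_j'(x_{2j-1})f_j'(x_{2j})|\right).
\end{equation*}
Finally, I would apply the single chordal conformal deformation Lemma~\ref{conformal deformation of Loewner potential} to each pair $\g_j = f_j(\g_{0,j}^\a)$ to convert each Brownian loop difference into $\mc H(\g_j) - \mc H(\g_{0,j}^\a) - \tfrac{1}{4}\log|f_j'(x_{2j-1})f_j'(x_{2j})|$, and sum against the decomposition $\mc H(\bar\g)=\sum_j \mc H(\g_j) + \mc B(\bar\g;\m H)$ from \eqref{eq multiple chordal potential} (and the analogue for $\bar\g_0^\a$). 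The exponent collapses to $\tfrac{c(\k)}{2}(\mc H(\bar\g)-\mc H(\bar\g_0^\a))-\bigl(\tfrac{c(\k)}{8}+b(\k)\bigr)\sum_j\log|f_j'(x_{2j-1})f_j'(x_{2j})|$, and the identity $\tfrac{c(\k)}{8}+b(\k)=\tfrac{3(6-\k)}{16}$, which follows immediately from plugging in $c(\k)=-(6-\k)(8-3\k)/(2\k)$ and $b(\k)=(6-\k)/(2\k)$, delivers the claimed formula. There is no serious obstacle: the only subtle point is that the interaction $\mc B(\bar\eta;\m H)$ couples all $n$ coordinates, so one needs the uniform control of Lemma~\ref{2uniform convergence} rather than the product of the single-chord estimates to bring it outside the limit cleanly.
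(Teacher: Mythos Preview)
Your proposal is correct and uses essentially the same ingredients as the paper: the decomposition of $Q^\k_{\bar x,\a}$ as a Brownian-loop-weighted product of single-chord measures, the GCR/COV steps from Lemma~\ref{lem property of single chordal}, the uniform convergence Lemmas~\ref{1uniform convergence} and~\ref{2uniform convergence}, and the conformal deformation Lemma~\ref{conformal deformation of Loewner potential}. The only organizational difference is that the paper first invokes the already-proved single-chord Theorem~\ref{OM for single chordal} as a black box to compute $\lim_{\e\to 0}\prod_j Q^\k_{\m H;\tilde x_{a_j},\tilde x_{b_j}}(O_\e^j)/Q^\k_{\m H;x_{2j-1},x_{2j}}(O_\e^j)$, then separately handles the interaction ratio $Q_{\tilde{\bar x},\b}^\k(O_\e)/\prod_j Q^\k_{\m H;\tilde x_{a_j},\tilde x_{b_j}}(O_\e^j)$ via Lemma~\ref{2uniform convergence}, whereas you carry the interaction term through the GCR/COV manipulations and take the limit of everything at once; both routes arrive at the same exponent and your arithmetic $\tfrac{c(\k)}{8}+b(\k)=\tfrac{3(6-\k)}{16}$ is correct.
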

\begin{proof}
    It follows from the proof of  Theorem~\ref{OM for single chordal} and independence that 
    \begin{align*}
         \lim_{\epsilon \rightarrow 0} \prod_{j=1}^n\frac{Q^\kappa_{\m H;\tilde{x}_{a_j},\tilde{x}_{b_j}}(O_\epsilon^j(\bar{\gamma}))}{Q^\kappa_{\m H;x_{2j-1},x_{2j}}(O_\epsilon^j(\bar{\gamma}_{0}^\alpha))}
        =&\exp \left( \frac{c(\kappa)}{2} \sum_{j=1}^n \left( \mc H_{\m H; \tilde{x}_{a_j},\tilde{x}_{b_j}}(\gamma_j)-\mc H_{\m H; x_{2j-1},x_{2j}}(\gamma_{0,j})\right)\right)\\
         &\times \exp\left(-\frac{3(6-\kappa)}{16} \sum_{j=1}^n \log\abs{f_j'(x_{2j-1})f_j'(x_{2j})}\right).
    \end{align*}
    Note that even if the neighborhoods change, Lemma~\ref{1uniform convergence} still applies.
    From the definition of multi-chordal $\SLE$ and the uniform convergence of $\mc B(\bar{\gamma};\m H)$ from Lemma~\ref{2uniform convergence}, we have 
    \begin{align*}
         &\lim_{\epsilon \rightarrow 0} \frac{ Q_{\tilde{\bar{x}},\b}^\kappa(O_\epsilon(\bar{\gamma}))}{\prod_{j=1}^n Q^\kappa_{\m H;\tilde{x}_{a_j},\tilde{x}_{b_j}}(O_\epsilon^j(\bar{\gamma}))} =\exp \left( \frac{c(\kappa)}{2} \mc B(\bar{\gamma};\m H) \right),\\
         &\lim_{\epsilon \rightarrow 0} \frac{ Q_{\bar{x},\alpha}^\kappa(O_\epsilon(\bar{\gamma}_0^\alpha))}{\prod_{j=1}^n Q^\kappa_{\m H;x_{2j-1},x_{2j}}(O_\epsilon^j(\bar{\gamma}_{0}^\alpha))} =\exp \left( \frac{c(\kappa)}{2} \mc B(\bar{\gamma}_0^\alpha;\m H) \right).
    \end{align*}
    Combining these, we have
    \begin{align*}
        \lim_{\epsilon \rightarrow 0} \frac{Q_{\tilde{\bar{x}},\b}^\kappa(O_\epsilon(\bar{\gamma}))}{Q_{\bar{x},\alpha}^\kappa(O_\epsilon(\bar{\gamma}_0^\alpha))}
        =&\exp \left( \frac{c(\kappa)}{2} \sum_{j=1}^n \left( \mc H_{\m H; \tilde{x}_{a_j},\tilde{x}_{b_j}}(\gamma_j)-\mc H_{\m H; x_{2j-1},x_{2j}}(\gamma_{0,j})\right)\right)\\ 
        &\times \exp \left(\frac{c(\kappa)}{2}\left(\mc B(\bar{\gamma};\m H)-\mc B(\bar{\gamma}_0^\alpha;\m H) \right)\right) \\
        &\times \exp\left(-\frac{3(6-\kappa)}{16} \sum_{j=1}^n \log\abs{f_j'(x_{2j-1})f_j'(x_{2j})}\right)\\
        =&\exp \left( \frac{c(\kappa)}{2} \left( \mc H(\bar{\gamma}) -\mc H(\bar{\gamma}_0^\alpha) \right) -\frac{3(6-\kappa)}{16} \sum_{j=1}^n \log\abs{f_j'(x_{2j-1})f_j'(x_{2j})}\right).
  \end{align*}
\end{proof}
\subsection{Forced case}
Define chordal $\rho$-Loewner energy as in \cite{Kru_rho_energy} to be
\begin{equation*}
    I^{\rho}_{D;x,y}(\g)=\frac{1}{2}\int_0^\infty (\partial_t W_t-{\rho}\Re\frac{ 1}{W_t-V_t})^2 \dd t,
\end{equation*}
when $W$ is absolutely continuous and is $\infty$ otherwise, where $V_t=g_t(0)$.
Define $\rho$-Loewner potential to be
\begin{equation}\label{eq forced chordal potential}
    \mc H_{D;x,y}^\rho(\g)=\frac{1}{12}I^{\rho}_{D;x,y}(\g)-\frac{(\rho+2)(\rho+6)}{48}\log P_{D;x,y}.
\end{equation}
\begin{lem}\label{conformal deformation of rho-Loewner potential}
    For $\rho>-2$, let $\g$ be a chord in $\m H$ with finite $\rho$-Loewner energy and $A$ be a neighborhood of $\g$ in $\m H$ that agrees with $\partial \m H$ near the boundary points $x$ and $y$ of $\g$. Assume $f$ is a conformal map on $A$ such that $f(\g)$ is also a chord in $\m H$ and $f(A)$ agrees with $\partial \m H$ near the boundary points of $f(\g)$. Then we have 
    \begin{equation*}
        \mc H^\rho_{\m H;f(x), f(y)}({f(\gamma)})- \mc H^\rho_{\m H;x, y}(\gamma)= \frac{(\rho+2)(\rho+6)}{48}\log\abs{f'(x)f'(y)}+ \mc B(\gamma, \m H\backslash A; \m H)- \mc B(f(\gamma), \m H\backslash f(A); \m H).
    \end{equation*}
\end{lem}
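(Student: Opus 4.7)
The plan is to mirror the proof of Lemma~\ref{conformal deformation of Loewner potential} line by line, with the chordal Loewner energy $I$ replaced by its $\rho$-variant $I^\rho$. The one new ingredient required is the $\rho$-analogue of the restriction formula from \cite{Wang21_note_on_conformal_restrcition}: for a chord $\gamma$ in $\m H$ from $x$ to $y$ and a compact $\m H$-hull $K$ at positive distance to $x$, $y$, and the force point at $0$,
\begin{equation*}
    I^\rho_{\m H\backslash K;x,y}(\gamma) - I^\rho_{\m H;x,y}(\gamma) = \frac{(\rho+2)(\rho+6)}{4}\log\abs{\psi'(x)\psi'(y)} + 12\,\mc B(\gamma, K;\m H),
\end{equation*}
where $\psi:\m H\backslash K\to\m H$ is the conformal map fixing $x$ and $y$ (and the force point). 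The Brownian loop coefficient is still $12$ because $\SLE_\kappa(\rho)$ is a drift perturbation of $\SLE_\kappa$ and carries the same central charge $c(\kappa)$; at $\rho=0$ the boundary coefficient reduces to $3$, recovering the identity used in the proof of Lemma~\ref{conformal deformation of Loewner potential}. I would extract this restriction formula from the Cameron--Martin/Girsanov computation underlying the $\rho$-Loewner energy in \cite{Kru_rho_energy}.

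Given this identity, the argument proceeds exactly as in Lemma~\ref{conformal deformation of Loewner potential}. I apply the formula first to $\gamma$ restricted to $A$, with $\psi_1:A\to\m H$ fixing $x,y$, and then to $f(\gamma)$ restricted to $f(A)$, with $\psi_2:f(A)\to\m H$ fixing $f(x),f(y)$. Conformal invariance of $I^\rho$ gives $I^\rho_{A;x,y}(\gamma) = I^\rho_{f(A);f(x),f(y)}(f(\gamma))$, so subtracting yields an expression for $I^\rho_{\m H;f(x),f(y)}(f(\gamma)) - I^\rho_{\m H;x,y}(\gamma)$ in terms of the $\psi_i'$ factors and the two Brownian loop masses. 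The M\"obius automorphism $M := \psi_2\circ f\circ \psi_1^{-1}$ of $\m H$ sends $x,y$ to $f(x),f(y)$, and the chain rule together with $\abs{M'(x)M'(y)} = P_{\m H;x,y}/P_{\m H;f(x),f(y)}$ converts the $\psi_i'$ terms into $\log\abs{f'(x)f'(y)}$ modulo a $\log(P_{\m H;f(x),f(y)}/P_{\m H;x,y})$ correction. Substituting into the definition \eqref{eq forced chordal potential} of $\mc H^\rho$, this correction is cancelled exactly by the Poisson kernel term thanks to the matching coefficient $(\rho+2)(\rho+6)/48$, and the claimed identity follows.

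The main obstacle I expect is the clean derivation of the boundary coefficient $(\rho+2)(\rho+6)/4$ in the restriction formula. The $\rho=0$ consistency check alone is not enough: one must track the additional contributions produced by the drift $\rho\,\Re[1/(W_t-V_t)]$ in the definition of $I^\rho$, namely a cross term with $\partial_t W_t$ and a deterministic quadratic term $\rho^2\int \Re[1/(W_t-V_t)]^2\,\dd t$. Both are geometric functionals of $\gamma$ and the force point, and one must verify that after the restriction to $\m H\backslash K$ they reassemble into a single prefactor $(\rho+2)(\rho+6)/4$ on $\log\abs{\psi'(x)\psi'(y)}$. This is made possible by choosing $K$ disjoint from a neighborhood of the force point so that $\psi$ acts trivially there, which is automatic from the hypothesis that $A$ agrees with $\partial\m H$ near the boundary points. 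Once this numerical identification is secure, the remainder of the argument is a direct transcription of the $\rho=0$ proof.
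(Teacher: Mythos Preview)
Your proposal is correct and matches the paper's approach: the paper also reduces the lemma to a $\rho$-restriction identity of the form $I^\rho_{A;x,y}(\gamma)-I^\rho_{\m H;x,y}(\gamma)=\frac{(\rho+2)(\rho+6)}{4}\log|\psi'(x)\psi'(y)|+12\,\mc B(\gamma,\m H\setminus A;\m H)$ and then runs the M\"obius argument verbatim from Lemma~\ref{conformal deformation of Loewner potential}. The only difference is that the paper carries out the derivation of this restriction identity explicitly in the proof via the Loewner-chain computation (setting $h_t=\tilde g_t\circ\psi\circ g_t^{-1}$ and identifying a total derivative $\partial_t G_t$ with $G_t=3\log h_t'(W_t)+\rho\log\bigl|\frac{h_t(W_t)-h_t(V_t)}{W_t-V_t}\bigr|+\frac{\rho(\rho+4)}{4}\log|h_t'(V_t)|$), whereas you defer it to \cite{Kru_rho_energy}; this is exactly the drift-term bookkeeping you flagged as the main obstacle.
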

\begin{proof}
    Let $W$ and $\tilde W$ denote the driving function of $\g$ in $(\m H;x,y)$ and $(A;x,y)$, respectively. Let $\psi:A \rightarrow \m H$ denote the conformal map normalized by $\psi(z)=z+ o(1)$ near $\infty$ Set $h_t:= \tilde{g}_t \circ \psi \circ g_t^{-1}$, where $\tilde{g}_t$ and $g_t$ are the conformal maps related with $\psi(\g[0, t])$ and $\g[0, t]$ from the Loewner equation, respectively. Assume that the half-plane capacity of $\g[0, t]$ is $a(t)$, it is not hard to derive that $\partial_t a= h_t' (W_t)^2$ and 
 \begin{equation*}
     \partial_t \tilde{W}_t= h_t'(W_t)(-3 h_t''(W_t)/  h_t'(W_t)+\partial_t {W}_t).
 \end{equation*}
It follows that 
\begin{align*}
    I^\rho_{A;x,y}(\g)&=\frac{1}{2}\int_0^{a(T)} (\partial_a \tilde{W}_{t(a)}-{\rho}\Re\frac{ 1}{\tilde W_{t(a)}-\tilde V_{t(a)}})^2 \dd a \\
    &=\frac{1}{2}\int_0^{T} (\frac{\partial_t \tilde{W}_{t}}{h_t'(W_t)}-{\rho}h_t'(W_t)\Re\frac{ 1}{h_t(W_t)-h_t(V_t)})^2 \dd t \\
    &=\frac{1}{2}\int_0^{T} (-3 h_t''(W_t)/  h_t'(W_t)+\partial_t {W}_t-{\rho}h_t'(W_t)\Re\frac{ 1}{h_t(W_t)-h_t(V_t)})^2  \dd t \\
    &=\int_0^{T} \frac{1}{2}(\partial_t {W}_t-{\rho}\Re\frac{ 1}{W_t-V_t})^2-4S h_t ({W}_t) -\partial_t G_t\dd t \\
    &=I^\rho_{\m H;x,y}(\g)-G_t|_{t=0}^T +12\mc B(\gamma, K; \m D),
\end{align*}
where
\begin{equation*}
    G_t=3\log h_t'(W_t) +\rho  \log \abs{\frac{h(W_t)-h_t(V_t)}{W_t-V_t}}+\frac{\rho(4+\rho)}{4}\log \abs{h_t'(V_t)}.
\end{equation*}

As $\m H\backslash A$ and $\m H\backslash f(A)$ are compact $\m H-$hulls at positive distance to the boundary points of $\g$ and $f(\g)$ respectively, we have
    \begin{equation*}
    I^\rho_{A;x, y }(\gamma)-I^\rho_{\m H;x, y}(\gamma) =\frac{(\rho+2)(\rho+6)}{4} \log\abs{\psi_1'(x)\psi_1'(y)} +12 \mc B(\gamma, \m H\backslash A; \m H),
\end{equation*}
where $\psi_1 : A \rightarrow \m H$ is conformal and fixes $x$ and $y$, and 
\begin{equation*}
    I^\rho_{f(A);f(x),f( y) }(f(\gamma))-I^\rho_{\m H;f(x), f(y)}(f(\gamma)) = \frac{(\rho+2)(\rho+6)}{4} \log\abs{\psi_2'(f(x))\psi_2'(f(y))} +12 \mc B(f(\gamma), \m H\backslash f(A); \m H),
\end{equation*}
where $\psi_2 : f(A) \rightarrow \m H$ is conformal and fixes $f(x)$ and $f(y)$.
Note that $I_{A;x, y }(\gamma)=I_{f(A);f(x),f( y) }(f(\gamma))$ by conformal invariance. And $M:=\psi_2 \circ f \circ\psi_1^{-1}$ is a \Mo map, as it is an automorphism of $\m H$. So we have
\begin{equation*}
    \abs{M'(x)M'(y)}=\abs{\frac{M(x)-M(y)}{x-y}}^2=\frac{P_{\m H; x,y}}{P_{\m H; f(x),f(y)}}.
\end{equation*}

Using the chain rule, we have $M'\psi_1'=\psi_2'\circ ff'$ at $x$ and $y$.

Combining these, we have
\begin{align*}
    &\mc H^\rho_{\m H;f(x), f(y)}({f(\gamma)})- \mc H^\rho_{\m H;x, y}(\gamma)\\
    =& \frac{1}{12}  (I^\rho_{\m H;f(x), f(y)}(f(\gamma))-I^\rho_{\m H;x, y}(\gamma) ) -\frac{(\rho+2)(\rho+6)}{48}\log\frac{ {P_{\m H; f(x),f(y)}}}{{P_{\m H; x,y}}}\\
    =& \frac{(\rho+2)(\rho+6)}{48}\log\abs{f'(x)f'(y)}+ \mc B(\gamma, \m H\backslash A; \m H)- \mc B(f(\gamma), \m H\backslash f(A); \m H).
\end{align*}
\end{proof}
For $\k\in(0,4]$ and $\rho >-2$, define
\begin{equation*}
    b_1=b_1(\k,\rho)=b(\k)=\frac{6-\k}{2\k};
\end{equation*}
\begin{equation*}
    b_2=b_2(\k,\rho)=\frac{\rho(\rho+4-\k)}{4\k};
\end{equation*}
\begin{equation*}
    b_3=b_3(\k,\rho)=\frac{\rho}{\k};
\end{equation*}
\begin{equation*}
    \a=\a(\k,\rho)=b_1+b_2+b_3=\frac{(\rho+2)(\rho+6-\k)}{4\k}.
\end{equation*}
On any simply connected domain $D$ with two distinct boundary points $x$, $y$ at which $\partial  D$ is analytic, the weighted $\SLE_\kappa(\rho)$ measure on chords in $(D; x,y)$ (with the force point $x$) is defined to be
\begin{equation*}
    Q^{\kappa,\rho}_{D; x,y}=H^{\kappa,\rho}_{D; x,y} \times \mu^{\kappa,\rho}_{D; x,y},
\end{equation*}
where $\mu^{\kappa,\rho}_{D; x,y}$ is the $\SLE_\kappa(\rho)$ probability measure, and $H^{\kappa,\rho}_{D; x,y}$ is determined by the scaling rule for any conformal map on $D$
\begin{equation*}
    H^{\kappa,\rho}_{D; x,y}= \abs{f'(x)f'(y)}^{\a} H^{\kappa,\rho}_{f(D); f(x),f(y)}
\end{equation*}
and the kernel

\begin{equation*}
    H^{\kappa,\rho}_{\m H;x, y} =  \abs{y-x}^{-2\a}.
\end{equation*}
We have the conformal covariance rule 
\begin{equation*}
    f \circ Q^{\kappa,\rho}_{D; x,y} = \abs{f'(x)f'(y)}^{\a} Q^{\kappa,\rho}_{f(D); f(x),f(y)}.
\end{equation*}
From \cite{LSW_CR_chordal}, $\SLE(\k,\rho)$ does not touch the real line except at $0$ when $\rho\geq\k/2-2$ and $\k \in (0,4]$. As mentioned at the end of Section~2 in \cite{KL07}, we can prove the two-sided conformal restriction using the local martingale from \cite{Dub05}
\begin{equation*}
    h_t'(W_t)^{b_1}h_t'(V_t)^{b_2}\frac{h_t(W_t)-h_t(V_t)}{W_t-V_t}^{b_3}\exp\left(c\int_0^t \Schwarzian h_s(W_s)\dd s\right).
\end{equation*}
Combining these together, we have the following.
\begin{lem}\label{lem property of forced chordal}
    Suppose $D \subset D' \subsetneqq \m C$ are simply connected domains. Suppose that $x$, $y$ are distinct points of $\partial D$ and $\partial D'$. And $\partial D$ and $\partial D'$ are analytic and agree in neighborhoods of $x$, $y$. Then $Q^{\kappa,\rho}_{D; x,y}$ is absolutely continuous with respect to $Q^{\kappa,\rho}_{D'; x,y}$ with Radon--Nikodym derivative 
    \begin{equation*}
        \mathbbm{1}_{\{\gamma \subset D\}} Y_{D,D';x,y}(\gamma) = \mathbbm{1}_{\{\gamma \subset D\}} \exp\left( \frac{c(\kappa)}{2} \mc B(\gamma, D'\backslash D; D')\right),
    \end{equation*}
    where $Y$ is a conformal invariant.
\end{lem}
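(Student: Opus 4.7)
The plan is to follow the same route as in the proof of the analogous statement for plain chordal $\SLE_\kappa$ in \cite{LSW_CR_chordal,KL07}, using the Dubédat local martingale displayed just above the lemma in place of the classical $\SLE_\kappa$ martingale. By the conformal covariance of $Q^{\kappa,\rho}$ and the conformal invariance of the Brownian loop measure, one first reduces to the case $D'=\m H$ and $(x,y)=(0,\infty)$. Let $\psi:D\to \m H$ be the conformal map normalised by $\psi(0)=0$ and $\psi(z)=z+o(1)$ near $\infty$. For a simple chord $\gamma$ in $\m H$ contained in $D$, denote by $g_t$ and $\tilde g_t$ the chordal Loewner maps of $\gamma[0,t]$ and $\psi(\gamma[0,t])$, and set $h_t:=\tilde g_t\circ\psi\circ g_t^{-1}$. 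The hypothesis $\rho\geq \kappa/2-2$ (satisfied throughout $\kappa\in(0,4]$ by the discussion cited from \cite{LSW_CR_chordal}) ensures that the $\SLE_\kappa(\rho)$ curve almost surely avoids $\partial\m H\setminus\{0\}$, so the process $M_t$ is a bona fide local martingale up to the hitting time of $\m H\backslash D$.

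Next, a Girsanov computation in the spirit of \cite[Sec.~2]{KL07} identifies the law of $\SLE_\kappa(\rho)$ in $D$ with the $\SLE_\kappa(\rho)$ law in $\m H$ weighted by $M_\infty/M_0$ and restricted to $\{\gamma\subset D\}$: the exponents $b_1,b_2,b_3$ are tuned by Dubédat precisely so that the Girsanov drift cancels the drift one obtains when conjugating the Loewner SDE by the time-dependent conformal map $h_t$. Pulling back by $\psi$, this yields the Radon--Nikodym derivative of $\mu^{\kappa,\rho}_{D;0,\infty}$ with respect to $\mu^{\kappa,\rho}_{\m H;0,\infty}\rvert_{\{\gamma\subset D\}}$ as
\[
M_0^{-1}\exp\left(c\int_0^\infty \Schwarzian h_s(W_s)\,\dd s\right),
\]
where the boundary factor $M_0$ is the product of the three explicit prefactors in the displayed martingale evaluated at $t=0$.

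Third, I would invoke Lawler's identity expressing the Schwarzian integral as a Brownian loop mass,
\[
c\int_0^\infty \Schwarzian h_s(W_s)\,\dd s \;=\; \frac{c(\kappa)}{2}\,\mc B(\gamma,\m H\backslash D;\m H),
\]
which is the computation underlying the central-charge mechanism in Lemma~\ref{lem property of single chordal}. Finally, multiplying by the ratio $H^{\kappa,\rho}_{D;0,\infty}/H^{\kappa,\rho}_{\m H;0,\infty}$ and applying the scaling rule with exponent $\a=b_1+b_2+b_3$ cancels $M_0^{-1}$ exactly (this cancellation is precisely why the combination $\a=b_1+b_2+b_3$ is the correct conformal weight for the partition function). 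What survives is the advertised Radon--Nikodym derivative $\mathbbm{1}_{\{\gamma\subset D\}}\exp(\tfrac{c(\kappa)}{2}\mc B(\gamma,D'\backslash D;D'))$, and the conformal invariance of $Y$ is inherited from the conformal invariance of $\mu^{\textbf{BL}}$.

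The main obstacle is the bookkeeping in the final step: one must verify that the boundary values of $h_t'(W_t)$, $h_t'(V_t)$ and the cross-ratio at $t=0$ combine to precisely the prefactor absorbed by the partition function scaling, and one must check that Lawler's Schwarzian-to-loop-measure identity continues to hold with the correct constant $c(\kappa)/2$ in the presence of the force point $V_t$. Both ingredients are well documented in the pure chordal case, so once they are transcribed carefully while tracking $V_t$, the argument reduces to the template of the proof of Lemma~\ref{lem property of single chordal}.
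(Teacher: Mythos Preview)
Your proposal is correct and follows exactly the route the paper indicates: the paper's argument consists precisely of citing the Dub\'edat local martingale and remarking that the two-sided restriction argument of \cite{KL07} goes through verbatim once the curve avoids $\partial\m H\setminus\{0\}$, which is your outline. One small correction: the parenthetical ``satisfied throughout $\kappa\in(0,4]$'' is not quite right---the condition $\rho\geq\kappa/2-2$ is a genuine restriction on $\rho$ (for $\kappa\in(0,4]$ it only holds automatically when $\rho\geq 0$), and the paper records it as a hypothesis rather than a consequence; also your worry about the Schwarzian--loop identity in the presence of $V_t$ is unfounded, since that identity depends only on the Loewner hull (hence on $W_t$), not on the force point.
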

Using the map $z \rightarrow (z-\ii)/(z+\ii)$, we identify $(\m H;0,\infty)$ and $(\m D;-1,1)$. Let $\gamma_0^\rho$ denote the chordal $\SLE_0(\rho)$ in the unit disk $(\m D;-1,1)$, whose chordal $\rho$-Loewner potential reaches the infimum. For any $\epsilon > 0$, let $A_\epsilon$ denote a neighborhood of $\gamma_0^\rho$ in $\m D$, which we need to have two chords and part of the unit circle as its boundary and decrease together with $\epsilon$ and agree with $\partial \m D$ in the neighborhoods of $-1$ and $1$.
Below, we choose $A_\epsilon$ to be the domain in $\m D$ bounded by the two chordal $\SLE_0(\rho)$ connecting  $\pm \exp( \ii\e)$ and $\mp \exp(- \ii\e)$.
Let $\gamma$ be an analytic chord such that $\gamma=f(\gamma_0^\rho)$ for some conformal map $f$ defined on some $A=A_{\epsilon_0}$ for some $\epsilon_0$ such that $\tilde A:=f(A)$ coincides with $\m S^1$ near $x=f(-1)$ and $y=f(1)$. For $\epsilon < \epsilon_0$, set $\tilde{A_\epsilon}=f(A_\epsilon)$.
Let us introduce the neighborhoods of $\gamma_0^\rho$ and $\gamma$ given by 
\begin{align*}
    &O_\epsilon(\gamma_0^\rho):=\{\text{simple~chords~in~}(A_\epsilon;-1,1)\},\\
    &O_\epsilon(\gamma):=\{\text{simple~chords~in~}(\tilde{A_\epsilon};x,y\}.
\end{align*}
We call the sets of simple chords of the form $O_\epsilon(\gamma)$ as admissible neighborhoods.
In this case, Lemma~\ref{lem Hausdorff topology} holds as we replace $\g_0$ by $\g_0^\rho$.
Similarly, we can prove the following.
\begin{thm}\label{OM for chordal variants}
    Let $\kappa \leq 4$ and $Q_{\m H; x, y}^{\kappa,\rho}$ denote the chordal $\SLE_\k$ measure in $(\m H; x, y)$. For any analytic simple chord $\gamma$ connecting $x$ and $y$ such that $\gamma=f(\gamma_0^\rho)$ for some conformal map $f$ defined on some $A=A_{\epsilon_0}$ for some $\epsilon_0$. Defining a collection of admissible neighborhoods $(O_\epsilon(\gamma))_{0 < \epsilon \ll 1}$  as above, we have that 
    \begin{align}
        \lim_{\epsilon \rightarrow 0} \frac{Q_{\m H; x, y}^{\kappa,\rho}(O_\epsilon(\gamma))}{Q_{\m H; 0, \infty}^{\kappa,\rho}(O_\epsilon(\gamma_0^\rho))}=\exp \left( \frac{c(\kappa)}{2}(\mc H_{\m H;x,y}^\rho(\gamma)-\mc H_{\m H;x,y}^\rho(\gamma_0))+F^{\k,\rho}(\g)\right),
    \end{align}
 with
 \begin{equation*}
     F^{\k,\rho}(\g)=\frac{(\rho+2)(3\rho\k+18\k-26\rho-108)}{192}\log\abs{f'(0)f'(\infty)}.
 \end{equation*}
 \end{thm}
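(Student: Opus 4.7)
The plan is to mirror the proof of Theorem~\ref{OM for single chordal} almost verbatim, replacing the three chordal inputs by their $\rho$-analogues: the conformal covariance and generalized conformal restriction of Lemma~\ref{lem property of forced chordal}, the uniform convergence of Brownian loop mass from Lemma~\ref{1uniform convergence} (whose statement and proof are insensitive to the identity of the reference chord, depending only on the decreasing neighborhood structure), and the conformal deformation of the $\rho$-Loewner potential in Lemma~\ref{conformal deformation of rho-Loewner potential}. No new probabilistic input is required; the entire argument is a bookkeeping of exponents.

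First I would fix $\epsilon_0$ and set $A=A_{\epsilon_0}$, $\tilde A = f(A)$. Applying GCR, then COV (with the $\SLE_\kappa(\rho)$ boundary weight $\alpha=\alpha(\kappa,\rho)=(\rho+2)(\rho+6-\kappa)/(4\kappa)$ in place of $b(\kappa)$), and then GCR once more, the numerator becomes
\[
Q^{\kappa,\rho}_{\m H;x,y}(O_\epsilon(\gamma)) = \int \mathbbm{1}_{\{\eta\subset A_\epsilon\}} \frac{Y_{A,\m H;0,\infty}(\eta)}{Y_{\tilde A,\m H;x,y}(f(\eta))} \abs{f'(0)f'(\infty)}^{-\alpha} \dd Q^{\kappa,\rho}_{\m H;0,\infty}(\eta).
\]
Dividing by $Q^{\kappa,\rho}_{\m H;0,\infty}(O_\epsilon(\gamma_0^\rho))$ and invoking the uniform convergence of Lemma~\ref{1uniform convergence} for the ratio of $Y$'s (noting that the admissible neighborhoods bounded by $\SLE_0(\rho)$ arcs verify the hypothesis of decreasing to $\gamma_0^\rho$ and agreeing with $\partial\m H$ near the endpoints), the $\epsilon \to 0$ limit becomes
\[
\exp\!\left(\frac{c(\kappa)}{2}\bigl(\mc B(\gamma_0^\rho,\m H\setminus A;\m H)-\mc B(\gamma,\m H\setminus \tilde A;\m H)\bigr)-\alpha\log\abs{f'(0)f'(\infty)}\right).
\]

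The second step is to substitute into the Brownian loop mass difference using Lemma~\ref{conformal deformation of rho-Loewner potential}, which replaces it by $\mc H^\rho_{\m H;x,y}(\gamma)-\mc H^\rho_{\m H;0,\infty}(\gamma_0^\rho)-\tfrac{(\rho+2)(\rho+6)}{48}\log\abs{f'(0)f'(\infty)}$. Collecting the $\log\abs{f'(0)f'(\infty)}$ terms then yields the prefactor
\[
F^{\kappa,\rho}(\gamma) = \left(-\frac{c(\kappa)}{2}\cdot\frac{(\rho+2)(\rho+6)}{48}-\alpha\right)\log\abs{f'(0)f'(\infty)}.
\]
Inserting $c(\kappa)=(6-\kappa)(3\kappa-8)/(2\kappa)$ and $\alpha=(\rho+2)(\rho+6-\kappa)/(4\kappa)$, pulling out the common factor $(\rho+2)/(192\kappa)$, and simplifying $(6-\kappa)(8-3\kappa)(\rho+6)-48(\rho+6-\kappa)$, the $\kappa$ in the denominator cancels and the announced expression $(\rho+2)(3\rho\kappa+18\kappa-26\rho-108)/192$ emerges.

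The main obstacle is essentially only this algebraic simplification; everything conceptual is already packaged in the two $\rho$-lemmas. A minor secondary point to verify is that the admissible neighborhoods $A_\epsilon$, now bounded by $\SLE_0(\rho)$ curves rather than hyperbolic geodesics, still form a valid family for both Lemma~\ref{1uniform convergence} (decreasing, simply connected, tangent to $\partial\m H$ at the marked points) and Lemma~\ref{lem property of forced chordal} (analytic boundary agreeing with $\partial\m H$ near $\pm 1$); both follow from the explicit construction, since $\SLE_0(\rho)$ curves are analytic away from their endpoints. The Hausdorff basis claim of Lemma~\ref{lem Hausdorff topology} then goes through with $\gamma_0$ replaced by $\gamma_0^\rho$, by the same argument as in Theorem~\ref{lem Hausdorff topology single chordal}.
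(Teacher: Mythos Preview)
Your proposal is correct and follows exactly the approach the paper intends: the paper gives no standalone proof of this theorem beyond ``Similarly, we can prove the following,'' relying on the template of Theorem~\ref{OM for single chordal} (and, more abstractly, the general argument for Theorem~\ref{main theorem}) with the $\rho$-versions of the three inputs, which is precisely what you do. Your algebraic check that $-\tfrac{c(\kappa)}{2}\cdot\tfrac{(\rho+2)(\rho+6)}{48}-\alpha$ simplifies to the stated $F^{\kappa,\rho}$ coefficient is also correct.
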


%\newpage
\section{Radial case}
\subsection{Single case}
For an arc $\g$ in $D$ with one endpoint $x \in \partial D$ and the other point $y \in D$, we say $\g$ is an arc in $(D;x,y)$.

Let $\gamma$ be a simple arc in $(\m D;1, 0)$, which we choose to parametrize by the radial capacity seen from $0$. That is, the conformal map $g_t: \m D \backslash \gamma[0, t] \rightarrow \m D$ can be normalized with $g_t(0)=0$ and $g_t'(0)=e^{-t}$. By extension we can define a continuous function $ U:[0,\infty) \rightarrow \m R$ such that $ U(0)=0$ and $e^{\ii  U(\cdot)}=g_\cdot(\gamma(\cdot))$, which is called the radial driving function of  $\gamma$. The arc $\g$ can be recovered from $ U$ using the radial Loewner equation
\begin{equation*}
    \partial_t g_t(z)=g_t(z)\frac{e^{\ii  U(t)}+g_t(z)}{e^{\ii  U(t)}-g_t(z)},
\end{equation*}
with $g_0(z)=z$.
 When $ U$ is the Brownian motion with speed $\k$, the random arc is exactly a radial SLE$_\k$.
The radial Loewner energy of $\gamma$ in  $(\m D;1, 0)$ is defined by 
\begin{equation*}
    I^R_{ \m D;x, 0}(\gamma):=I( U):=\frac{1}{2}\int_0^\infty (\partial_t U(t))^2\dd t
\end{equation*}
when $ U$ is absolutely continuous and is $\infty$ otherwise. For any simply connected domain $D \subsetneqq \m C$ with a prime end $x$ and an interior point $y$ , using the conformal map $\psi:D \rightarrow \m D$ such that $\psi(x)=1$ and $\psi(y)=0$, we can define the radial Loewner energy of $\gamma$ in $(D;x,y)$ by
\begin{equation*}
    I^R_{D;x,y}(\gamma):=I^R_{\m D; 1,0}(\psi(\gamma)).
\end{equation*}

    Define the radial Loewner potential to be
\begin{equation}\label{eq single radial potential}
    \mc H^R_{\m D;x,0}(\gamma):=\frac{1}{12}I^R_{\m D; x,0}(\gamma).
\end{equation}

\begin{lem}\label{conformal deformation of radial Loewner energy}
    For a compact $\m D-$hull $K$ at positive distance to $0$ and $1$, the radial Loewner energy in $(\m D;1, 0)$ and in $(\m D \backslash K;1, 0 )$ differ by 
\begin{equation*}
    I^R_{\m D \backslash K;1, 0 }(\gamma)-I^R_{\m D;1, 0}(\gamma)= 3 \log \abs{\psi'(1)} -\frac{3}{2}\log \abs{\psi'(0)} +12 \mc B(\gamma, K; \m D),
\end{equation*}
where $\psi : \m D \backslash K \rightarrow \m D$ is conformal with $\psi(0)=0$ and $\psi'(0)>0$. Using this, we could obtain the conformal deformation of the radial Loewner energy as follows.
    
       Let $\g$ be an arc in $(\m D;x=1,y=0)$ with finite radial Loewner energy and $A$ be a neighborhood of $\g$ in $\m D$ that agrees with $\m D$ near the points $x$ and $y$. Assume $f$ is a conformal map on $A$ such that $f(\g)$ is also an arc in $\m D$ and $f(A)$ agrees with $\m D$ and $\partial f(A)$ agree with $\partial \m D$ near the boundary point $f(x)$ and the interior point $f(y)=0$. Then we have
    \begin{equation*}
        \mc H^R_{\m D;f(x), f(y)}({f(\gamma)})- \mc H^R_{\m D;x, y}(\gamma)= 
        \frac{1}{4}\log\abs{\frac{f'(x)}{f'(y)^{1/2}}}+\mc B(\gamma, \m D\backslash A; \m D)- \mc B(f(\gamma), \m D\backslash f(A); \m D).
    \end{equation*}

\end{lem}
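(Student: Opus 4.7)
The strategy mirrors the chordal Lemma~\ref{conformal deformation of Loewner potential}, with the hull-removal identity (the first displayed formula of the lemma) playing the role of the chordal Wang identity. Granting the hull-removal formula, the conformal deformation of $\mc H^R$ follows by a double application and a Möbius chain-rule comparison, exactly as in the chordal proof.

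\emph{Step 1: Hull-removal identity.} Parametrize $\gamma$ by radial capacity in $\m D$, so that $g_t:\m D\setminus\gamma[0,t]\to\m D$ has radial driving function $U_t$. Let $\tilde g_t$ denote the radial Loewner flow of $\psi(\gamma)$ in $\m D$ with driving function $\tilde U_t$, and set $h_t:=\tilde g_t\circ\psi\circ g_t^{-1}:\m D\setminus g_t(K)\to\m D$. Then $h_t(0)=0$, $h_t'(0)>0$, and $h_t(e^{\ii U_t})=e^{\ii\tilde U_t}$. The chain rule and the radial Loewner equation yield both a time-change relation between the radial capacity $\tilde t$ of $\psi(\gamma)$ and $t$, and a schematic driving-function identity
\[
    \partial_t\tilde U_t\;=\;|h_t'(e^{\ii U_t})|^2\,\partial_tU_t+(\text{lower-order terms in }h_t'(e^{\ii U_t}),\,h_t''(e^{\ii U_t})).
\]
Squaring, dividing by $\partial_t\tilde t$, and integrating in $t$ then expresses $I^R_{\m D\setminus K;1,0}(\gamma)-I^R_{\m D;1,0}(\gamma)$ as the sum of a Schwarzian time-integral and exact-derivative boundary terms. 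The exact-derivative terms telescope to $3\log|\psi'(1)|-\tfrac32\log|\psi'(0)|$; the asymmetry between the coefficients $3$ and $-3/2$ reflects the difference between the boundary conformal weight at the radial source $1$ and the interior conformal weight at the target $0$. The Schwarzian integral equals $12\,\mc B(\gamma,K;\m D)$ by the radial form of the Lawler--Werner--Dubédat identity relating $\int\Schwarzian h_t(e^{\ii U_t})\,\dd t$ to the Brownian loop mass.

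\emph{Step 2: Deduction of the conformal deformation formula.} Apply Step 1 first to $\gamma\subset A\subset\m D$ with uniformizer $\psi_1:A\to\m D$ satisfying $\psi_1(y)=0$, $\psi_1'(y)>0$, $\psi_1(x)=x$, and then to $f(\gamma)\subset f(A)\subset\m D$ with uniformizer $\psi_2:f(A)\to\m D$ satisfying $\psi_2(f(y))=0$, $\psi_2'(f(y))>0$, $\psi_2(f(x))=f(x)$. Conformal invariance of the radial Loewner energy gives $I^R_{A;x,y}(\gamma)=I^R_{f(A);f(x),f(y)}(f(\gamma))$, so subtracting the two instances of Step 1 eliminates the energies on $A$ and $f(A)$ and yields
\[
    I^R_{\m D;f(x),0}(f(\gamma))-I^R_{\m D;x,y}(\gamma)=3\log\frac{|\psi_1'(x)|}{|\psi_2'(f(x))|}-\tfrac32\log\frac{|\psi_1'(y)|}{|\psi_2'(f(y))|}+12\bigl(\mc B(\gamma,\m D\setminus A;\m D)-\mc B(f(\gamma),\m D\setminus f(A);\m D)\bigr).
\]
The composition $M:=\psi_2\circ f\circ\psi_1^{-1}$ is a conformal automorphism of $\m D$ with $M(0)=0$ and $M(x)=f(x)$, hence a rotation. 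Consequently $|M'(0)|=|M'(x)|=1$, and the chain rule at $x$ and $y$ gives $|\psi_1'(x)|/|\psi_2'(f(x))|=|f'(x)|$ and $|\psi_1'(y)|/|\psi_2'(f(y))|=|f'(y)|$. Substituting these ratios and dividing by $12$ produces the claimed identity, since $\tfrac{3}{12}=\tfrac14$ and $\tfrac{-3/2}{12}=-\tfrac18$ combine as $\tfrac14\log|f'(x)/f'(y)^{1/2}|$.

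\emph{Main obstacle.} The essential work is the Schwarzian identity in Step 1, with precisely the coefficients $3$ at the boundary point $1$ and $-3/2$ at the interior target $0$. This is the radial analog of the identity exploited by Wang in the chordal setting cited in the paper, and requires the conformal restriction formula for the Brownian loop soup together with a careful computation of $\partial_t\log h_t'(0)$ and $\partial_t\log |h_t'(e^{\ii U_t})|$ along the radial flow. Once this identity is in place, the remainder of the argument is bookkeeping, using that every Möbius automorphism of $\m D$ fixing $0$ is a rotation.
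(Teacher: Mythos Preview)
Your approach is essentially the paper's: derive the hull-removal identity by tracking the change of driving function and time under $\psi$, identify the Schwarzian integral with the Brownian loop mass, then apply the identity twice and cancel via a rotation $M=\psi_2\circ f\circ\psi_1^{-1}$.

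Three small slips to fix. First, in your schematic identity the leading coefficient should be linear, not quadratic: the paper passes to the covering map $\phi_t$ with $e^{\ii\phi_t(z)}=h_t(e^{\ii z})$ and obtains $\partial_t\tilde U_t=\phi_t'(U_t)\bigl(\partial_tU_t-3\phi_t''(U_t)/\phi_t'(U_t)\bigr)$ together with $\partial_t\tilde t=\phi_t'(U_t)^2$; since $|h_t'(e^{\ii U_t})|=\phi_t'(U_t)$, your factor $|h_t'(e^{\ii U_t})|^2$ should be $|h_t'(e^{\ii U_t})|$. Working with $\phi_t$ rather than $h_t$ also keeps the Schwarzian computation clean. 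Second, the telescoping of the exact-derivative terms requires that the $t=\infty$ boundary contributions vanish; the paper invokes $\phi_{T_\e}'(U_{T_\e})\to1$ and $\psi_{T_\e}'(0)\to1$ along a sequence $T_\e\to\infty$ (citing \cite{JL18}), and you should say this explicitly. Third, your normalizations $\psi_1(y)=0$, $\psi_1'(y)>0$, $\psi_1(x)=x$ over-determine the Riemann map by one real condition and are generally incompatible; the paper imposes only $\psi_1(y)=0$, $\psi_1'(y)>0$, and your chain-rule conclusion $|\psi_1'(x)|/|\psi_2'(f(x))|=|f'(x)|$ follows from $|M'|\equiv1$ without needing $\psi_1(x)=x$.
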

\begin{proof}
    Set $\psi_t:= \tilde{g}_t \circ \psi \circ g_t^{-1}$ and choose a continuous $\phi_t$ such that $e^{ \ii\phi_t(z)}=\psi_t(e^{ \ii z})$ (which we call $\phi_t$ is the covering map of $\psi_t$), where $\tilde{g}_t$ and $g_t$ are the conformal maps related with $\psi(\g[0, t])$ and $\g[0, t]$ from the radial Loewner equation, respectively. It is easy to check that $\psi'(1)/\psi(1)=\phi_0'(0)$
    For the proof of the first statement, it suffices to show that for $T< \infty$,
    \begin{equation*}
    I^R_{\m D \backslash K;1, 0 }(\gamma[0, T])-I^R_{\m D;1, 0}(\gamma[0, T])= -3\log\phi_t'(U_t)|_{t=0}^T +12\mc B(\gamma, K; \m D)+\frac{3}{2}\log \psi_t'(0) |_{t=0}^T,
\end{equation*}
    since $\phi_{T_\e}'(U_{T_\e}) \rightarrow 1$ and $\psi_{T_\e}'(0)\rightarrow 1$ as $\e \rightarrow 0$, where $T_\e:=\inf\{t\geq 0 \vert \g(t)\cap B(0,\e)\neq \emptyset \}$, see \cite{JL18}.
 Set $U_t$ and $\tilde{U}_t$ to be the driving functions at time $t$. Assume that the radial capacity of $\g[0, t]$ is $a(t)$, it is not hard to derive that $\partial_t a=\phi_t' (U_t)^2$ and 
 \begin{equation*}
     \partial_t \tilde{U}_t=\phi_t'(U_t)(-3\phi_t''(U_t)/ \phi_t'(U_t)+\partial_t {U}_t).
 \end{equation*}
It follows that 
\begin{align*}
    I(\tilde{U})&=\frac{1}{2}\int_0^{a(T)} \partial_a \tilde{U}_{t(a)}^2 \dd a \\
    &=\frac{1}{2}\int_0^{T} \partial_t \tilde{U}_{t}^2 (\partial_t a)^{-1} \dd t \\
    &=\frac{1}{2}\int_0^{T} (-3\phi_t''(U_t)/ \phi_t'(U_t)+\partial_t {U}_t)^2  \dd t \\
    &=\int_0^{T} \frac{1}{2}(\partial_t {U}_t)^2-3\partial_t\log\phi_t'(U_t)-4S\phi_t ({U}_t)  \dd t \\
    &=I(U)-3\log\phi_t'(U_t)|_{t=0}^T +12\mc B(\gamma, K; \m D)+\frac{3}{2}\log \psi_t'(0) |_{t=0}^T,
\end{align*}
where 
\begin{equation*}
    S \phi_t=\frac{\phi_t'''}{\phi_t'}-\frac{3}{2}\frac{\phi_t''}{\phi_t'}^2
\end{equation*}
denotes the Schwarzian derivative. The last equality comes from the path decomposition of the Brownian loop measure.
    As $\m D\backslash A$ and $\m D\backslash f(A)$ are compact $\m D-$hulls at positive distance to the boundary and interior points of $\g$ and $f(\g)$ respectively, we have
    \begin{equation*}
    I^R_{A;x, y }(\gamma)-I^R_{\m D;x, y}(\gamma) =  3 \log \abs{\psi_1'(x)}-\frac{3}{2}\log \abs{\psi_1'(y)}  +12 \mc B(\gamma, \m D\backslash A; \m D),
\end{equation*}
where $\psi_1 : A \rightarrow \m D$ is conformal with $\psi_1(y)=y$ and $\psi_1'(y)>0$, and 
\begin{equation*}
    I^R_{f(A);f(x),f( y) }(f(\gamma))-I^R_{\m D;f(x), f(y)}(f(\gamma)) = 3 \log\abs{\psi_2'(f(x))} -\frac{3}{2}\log\abs{\psi_2'(f(y))} +12 \mc B(f(\gamma), \m D\backslash f(A); \m D),
\end{equation*}
where $\psi_2 : f(A) \rightarrow \m D$ is conformal with $\psi_2(f(y))=f(y)$ and $\psi_2'(f(y))>0$.
Note that $I^R_{A;x, y }(\gamma)=I^R_{f(A);f(x),f( y) }(f(\gamma))$ by conformal invariance. And $M:=\psi_2 \circ f \circ\psi_1^{-1}$ is a \Mo map with $M(y)=f(y)=0$ and  $M(\psi_1(x))=\psi_2(f(x))$, as it is an automorphism of $\m D$. So we have $M$ is a rotation.

Combining these, we have
\begin{align*}
   & =I^R_{\m D;f(x), f(y)}(f(\gamma))-I^R_{\m D;x, y}(\gamma)  \\
     =&3\log\abs{\frac{{f'(x)}}{M'(\psi_1(x))}}-\frac{3}{2}\log\abs{\frac{{f'(y)}}{M'(y)}}+ 12\mc B(\gamma, \m D\backslash A; \m D)- 12\mc B(f(\gamma), \m D\backslash f(A); \m D)\\
    =&3\log\abs{{f'(x)}}-\frac{3}{2}\log\abs{f'(y)}+ 12\mc B(\gamma, \m D\backslash A; \m D)- 12\mc B(f(\gamma), \m D\backslash f(A); \m D) .
\end{align*}
\end{proof}
 
For $\k \in (0,4]$, define
\begin{equation*}
    \tilde b(\k)=\frac{(6-\k)(\k-2)}{8\k}.
\end{equation*}
On any simply connected domain $D$ with a boundary points $x$ and an interior ponit $y$ at which $\partial  D$ is analytic, the weighted radial $\SLE_\kappa$ measure on arcs in $(D; x,y)$ is defined to be
\begin{equation*}
    Q^\kappa_{D; x,y}=H^\kappa_{D; x,y} \times \mu^\kappa_{D; x,y},
\end{equation*}
where $\mu^\kappa_{D; x,y}$ is the radial $\SLE_\kappa$ probability measure with $\sqrt{\k}$ times Brownian motion as the driving function of the radial Loewner equation, and $H^\kappa_{D; x,y}$ is determined by the scaling rule for any conformal map on $D$
\begin{equation*}
    H^\kappa_{D; x,y}= \abs{f'(x)}^{b(\kappa)}  \abs{f'(y)}^{\tilde b(\kappa)} 
 H^\kappa_{f(D); f(x),f(y)}
\end{equation*}
and the kernel
\begin{equation*}
    H^\kappa_{\m D;1, 0} = 1.
\end{equation*}
We have the conformal covariance rule
\begin{equation*}
    f \circ Q^\kappa_{D; x,y} = \abs{f'(x)}^{b(\kappa)}  \abs{f'(y)}^{\tilde b(\kappa)} Q^\kappa_{f(D); f(x),f(y)}.
\end{equation*}
Following \cite{Wu15_conformal_restriction_radial,JL18}, we have
\begin{lem}\label{lem property of single radial}
    Suppose $D \subset D' \subsetneqq \m C$ are simply connected domains. Suppose that $x$ is a boundary point of $\partial D$ and $\partial D'$ and $y$ is an interior point of $D$ and $D'$. Besides, $\partial D$ and $\partial D'$ are analytic and agree in neighborhoods of $x$ while $ D$ and $ D'$ are analytic and agree in neighborhoods of $y$.  Then $Q^\kappa_{D; x,y}$ is absolutely continuous with respect to $Q^\kappa_{D'; x,y}$ with Radon--Nikodym derivative 
    \begin{equation*}
        \mathbbm{1}_{\{\gamma \subset D\}} Y_{D,D';x,y}(\gamma) = \mathbbm{1}_{\{\gamma \subset D\}} \exp\left( \frac{c(\kappa)}{2} \mc B(\gamma, D'\backslash D; D')\right),
    \end{equation*}
    where $Y$ is a conformal invariant.
\end{lem}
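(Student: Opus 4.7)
The plan is to adapt the boundary-perturbation argument for radial $\SLE_\k$ from \cite{Wu15_conformal_restriction_radial,JL18} to the weighted measure $Q^\k$, combining it with the covariance rule already stated above so that the boundary and interior conformal weights are absorbed into the partition function. By the covariance rule applied to both $Q^\k_{D;x,y}$ and $Q^\k_{D';x,y}$, I reduce to the uniformized setting $D' = \m D$, $x = 1$, $y = 0$, with $D \subset \m D$ agreeing with $\m D$ near $0$ and with $\partial \m D$ near $1$. Let $\psi : D \to \m D$ be the conformal map fixing $0$ with $\psi'(0) > 0$, and form the transport $\psi_t := \tilde g_t \circ \psi \circ g_t^{-1}$ together with its continuous covering map $\phi_t$, exactly as in the proof of Lemma~\ref{conformal deformation of radial Loewner energy}, where $g_t$ is the radial Loewner flow in $\m D$ driven by $U_t = \sqrt{\k}\,B_t$.

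The key step is to exhibit the positive process
\[
M_t := \mathbbm{1}_{\{t < \tau\}}\, |\phi_t'(U_t)|^{b(\k)}\, |\psi_t'(0)|^{\tilde b(\k)}\, \exp\!\left(\tfrac{c(\k)}{2}\, \mc B(\g[0,t], \m D \setminus D; \m D)\right),
\]
where $\tau$ is the first time $\g$ exits $D$, as a local martingale under the reference law. Using the radial Loewner evolution equations for $\psi_t$ and its derivatives, combined with the decomposition of $\mc B(\g[0,t],\m D\setminus D;\m D)$ as a time integral of $S\phi_s(U_s)$ plus a boundary contribution in $\log\psi_t'(0)$ extracted from the proof of Lemma~\ref{conformal deformation of radial Loewner energy}, Itô's formula shows that the drift of $dM_t$ cancels precisely for the stated exponents $b(\k)$, $\tilde b(\k)$, $c(\k)/2$; this is the radial analog of the Dubédat-type martingale invoked in the proof of Lemma~\ref{lem property of forced chordal}. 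Girsanov's theorem then identifies the law weighted by $M_t/M_0$ as the radial $\SLE_\k$ in $D$ from $1$ to $0$ (via $\psi(\g)$ being $\SLE_\k$ in $\m D$ from $\psi(1)$ to $0$), and as $t \to T$ the normalizations yield $\phi_t'(U_t), \psi_t'(0) \to 1$ (see \cite{JL18}), so the terminal density is $\mathbbm{1}_{\{\g \subset D\}}\exp(\tfrac{c(\k)}{2}\, \mc B(\g, \m D\setminus D; \m D))$.

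To promote the identity from probability measures to the weighted measures $Q^\k$, I multiply by the partition function kernels. The scaling rule
\[
H^\k_{D;1,0} = |\psi'(1)|^{b(\k)}\, |\psi'(0)|^{\tilde b(\k)}\, H^\k_{\m D;\psi(1),0}
\]
combined with the rotation invariance $H^\k_{\m D;\psi(1),0} = H^\k_{\m D;1,0} = 1$ gives $H^\k_{D;1,0} = |\phi_0'(0)|^{b(\k)}|\psi_0'(0)|^{\tilde b(\k)} = M_0$ (using $|\phi_0'(0)| = |\psi'(1)|$ as recorded in the proof of Lemma~\ref{conformal deformation of radial Loewner energy}), which exactly cancels the initial factor $M_0^{-1}$ produced by the Girsanov weighting. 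Hence the Radon--Nikodym derivative of $Q^\k_{D;1,0}$ with respect to $Q^\k_{\m D;1,0}$ equals $\mathbbm{1}_{\{\g\subset D\}} \exp(\tfrac{c(\k)}{2}\mc B(\g, \m D\setminus D; \m D))$, and conformal invariance of $Y$ is immediate since only the intrinsically conformally invariant Brownian loop mass remains. The main obstacle is the Itô cancellation in the second step: it requires carefully tracking the evolution of $\phi_t'(U_t)$, $\phi_t''(U_t)$ and $\psi_t'(0)$ under the radial Loewner flow and matching the three exponents to the central charge through the Schwarzian identity, but this is a standard computation essentially parallel to the one already carried out in the proof of Lemma~\ref{conformal deformation of radial Loewner energy}.
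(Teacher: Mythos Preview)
Your proposal is correct and follows precisely the local-martingale/Girsanov argument of \cite{Wu15_conformal_restriction_radial,JL18} that the paper itself merely cites without reproducing; the paper offers no proof beyond the one-line reference preceding the lemma, so your sketch in fact supplies more detail than the paper does while staying on the same track.
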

Now, we show the uniform convergence of the total mass of Brownian loops in the unit disk that intersect an arc and a hull.
\begin{lem}\label{3 uniform convergence}
    Let $(A_\epsilon \subset A)_{\epsilon>0}$ be a decreasing family of simply connected neighborhoods of a multi-arc $\gamma$ in $(\m D;\bar x,y)$, which agree with $\m D$ or $\partial \m D$ in the neighborhoods of the interior point $y$ or the boundary point $\bar x$, respectively. Assume that $A_\epsilon \downarrow \gamma$ as $\epsilon\downarrow 0$ and let $O_\e$ denote the set of multi-arcs in $(A_\e;\bar x,y)$, we assume
    \begin{equation*}
        \sup_{\eta \in O_\e} d_h(\eta,\g) \stackrel{\epsilon \rightarrow 0+}{\longrightarrow} 0.
    \end{equation*}
    Then we have the uniform convergence as follows:
\begin{equation*}
    \sup_{\{\eta  \in O_\epsilon\}}  \mc B(\eta, \m D \backslash A; \m D), \inf_{\{\eta  \in O_\epsilon\}}  \mc B(\eta, \m D \backslash A; \m D)   \stackrel{\epsilon \rightarrow 0+}{\longrightarrow} \mc B(\gamma, \m D \backslash A; \m D).
\end{equation*}
\end{lem}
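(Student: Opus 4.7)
The plan is to follow the template of Lemmas~\ref{1uniform convergence} and~\ref{2uniform convergence}, treating the supremum and infimum separately. For the supremum, $\gamma \in O_\e$ (since $\gamma \subset A_\e$) gives $\sup_{\eta \in O_\e} \mc B(\eta, \m D\backslash A; \m D) \ge \mc B(\gamma, \m D\backslash A; \m D)$, while $\eta \subset A_\e$ gives $\mc B(\eta, \m D\backslash A; \m D) \le \mc B(A_\e, \m D\backslash A; \m D)$, and monotone convergence of the Brownian loop measure under $A_\e \downarrow \gamma$ yields the matching upper bound. The upper bound on the infimum is immediate from $\gamma \in O_\e$ as well, so only the lower bound on the infimum requires additional work.

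For that lower bound I would decompose $\m D\backslash A$ into its connected components. In the multi-arc case with $n \ge 2$ arms, these are $n$ ``gaps'' $\m G_1, \ldots, \m G_n$, one between each pair of consecutive arms around $y$. The key topological fact is that $\eta$ cuts $\m D$ into $n$ sectors, each containing exactly one gap, so any Brownian loop in $\m D$ touching two distinct gaps must cross $\eta$. Inclusion--exclusion then gives
\begin{equation*}
\mc B(\eta, \m D \backslash A; \m D) = \sum_{j=1}^n \mc B(\eta, \m G_j; \m D) + C(A),
\end{equation*}
with $C(A)$ a signed sum of $\mc B(\m G_{j_1}, \ldots, \m G_{j_k}; \m D)$ terms depending only on $A$. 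Each $\mc B(\eta, \m G_j; \m D)$ is then bounded from below by adapting the chordal argument of Lemma~\ref{1uniform convergence}: one uses the two components of $\partial A_\e$ bounding $\m G_j$ in place of $L, R$, and a loop touching opposite such components across the adjacent arm of $\eta$ must cross that arm. These bounds depend on $A_\e$ but not on $\eta$, and monotone convergence as the relevant components shrink to $\gamma_j, \gamma_{j+1}$ yields the uniform lower bound converging to $\sum_j \mc B(\gamma, \m G_j; \m D)$, which recombines to $\mc B(\gamma, \m D\backslash A; \m D)$.

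The main obstacle is the single-arm case $n=1$, where $\m D\backslash A$ is connected and the chordal separation argument fails directly: cutting $A$ along $\eta$ still leaves a connected set, because $\eta$ terminates at the interior point $y$ and loops can pass around $y$ without crossing $\eta$. This is precisely where the Hausdorff convergence assumption $\sup_{\eta \in O_\e} d_h(\eta,\gamma) \to 0$ enters. I would excise a small disk $B_\rho(y)$ around $y$ to recover a chordal-type geometry on $\m D\backslash B_\rho(y)$, apply the chordal-style lower bound of Lemma~\ref{1uniform convergence} there, and let $\rho\to 0$. The uniform Hausdorff closeness of $\eta$ to $\gamma$ is what controls the residual contribution of loops lying near $y$, which would otherwise spoil the uniformity in $\eta$, and so delivers the required uniform lower bound $\liminf_{\e\to 0} \inf_{\eta\in O_\e}\mc B(\eta,\m D\backslash A;\m D)\ge \mc B(\gamma,\m D\backslash A;\m D)$.
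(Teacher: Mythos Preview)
Your argument is correct in outline but takes a substantially different route from the paper. The paper's proof is a single sentence: it observes that the map $\eta \mapsto \mc B(\eta, \m D\backslash A; \m D)$ is continuous with respect to the Hausdorff distance, so the hypothesis $\sup_{\eta\in O_\e} d_h(\eta,\gamma)\to 0$ immediately forces both the supremum and the infimum to converge to $\mc B(\gamma,\m D\backslash A;\m D)$. No separation argument, inclusion--exclusion, or case distinction on $n$ is used.

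Your approach instead transplants the hands-on arguments of Lemmas~\ref{1uniform convergence} and~\ref{2uniform convergence} to the radial geometry, which is more laborious but closer to being self-contained. Your decomposition of $\m D\backslash A$ into gaps $\m G_j$ and the observation that loops joining distinct gaps must cross $\eta$ are correct, and they do give the inclusion--exclusion identity you state. The per-gap lower bound and the $n=1$ excision around $y$ are the right ideas, though the details would need care (for instance, exactly which pieces of $\partial A_\e$ play the roles of $L_\e,R_\e$ in the star-shaped radial geometry, and the uniform-in-$\eta$ control of the contribution from the small disk around $y$). Note also that the Hausdorff hypothesis is \emph{new} in this lemma relative to Lemmas~\ref{1uniform convergence} and~\ref{2uniform convergence}; the paper introduces it precisely to sidestep the topological work you are doing. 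In your scheme it is invoked only for the $n=1$ infimum, whereas the paper leans on it for the entire statement.
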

\begin{proof}
  The uniform convergence comes from the continuity of Brownian loop measure with respect to the Hausdorff distance.
    \end{proof}
Let $\g^0=[0,1]$ denote the hyperbolic geodesic in the unit disk $\m D$, whose radial Loewner energy is $0$. For any $\epsilon > 0$, let $A_\epsilon$ denote a neighborhood of $\g^0$ in $\m D$, which we need to have one chord and part of the unit circle as its boundary and decrease together with $\epsilon$ and agree with $\m D$ or $\partial \m D$ in the neighborhoods of $0$ or $1$, respectively.
Below, we choose $A_\epsilon$ such that the chord is chosen to be the union of the left semi-circle centered at $0$ with radius $\e$ and two horizontal lines connecting the unit circle.
Let $\gamma$ be an analytic arc in $(\m D;x\in \partial \m D,0)$ such that $\gamma=f(\g^0)$ for some conformal map $f$ defined on some $A=A_{\epsilon_0}$ for some $\epsilon_0$ with $f(0)=0$, $f(1)=x$ and $\abs{f'(0)f'(1)^6}=1$. Set $\tilde{A}=f(A)$. For $\epsilon < \epsilon_0$, set $\tilde{A_\epsilon}=f(A_\epsilon)$.
Let us introduce the neighborhoods of $\g^0$ and $\gamma$ given by 
\begin{align*}
    &O_\epsilon(\g^0):=\{\text{simple~arcs~in~}(A_\epsilon;1,0)\},\\
    &O_\epsilon(\gamma):=\{\text{simple~arcs~in~}(\tilde{A_\epsilon};x,0)\}.
\end{align*}
We call the sets of simple arcs of the form $O_\epsilon(\gamma)$ as admissible neighborhoods.
In this case, Lemma~\ref{lem Hausdorff topology} holds by firstly extending the arc $\g$ to a chord by the hyperbolic geodesic in $\m D\backslash \g$ connecting the interior point $0$ and the reflection $-x$ of the boundary point $x$ with respect to the interior point, and then restricting the neighborhoods. We do this to modify the previous proof, otherwise we can just write a separate proof.

\begin{thm}\label{OM for single radial}
    Let $\kappa \leq 4$ and $Q_{\m D; x, y}^\kappa$ denote the radial $\SLE$ measure in $(\m D; x, y=0)$. For any analytic simple arc $\gamma$ connecting a boundary point $x$ and an interior point $y$ such that $\gamma=f(\g^0)$ for some conformal map $f$ defined on some $A=A_{\epsilon_0}$ for some $\epsilon_0$ and define a collection of admissible neighborhoods $(O_\epsilon(\gamma))_{0 < \epsilon \ll 1}$  as above, we have that 
 \begin{equation}
        \lim_{\epsilon \rightarrow 0} \frac{Q_{\m D; x, y}^\kappa(O_\epsilon(\gamma))}{Q_{\m D; 1, 0}^\kappa(O_\epsilon(\g^0))}=\exp \left( \frac{c(\kappa)}{2}(\mc H^R(\gamma)-\mc H^R(\g^0))
        \right).
    \end{equation}
 \end{thm}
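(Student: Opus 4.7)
The plan is to mirror the argument of Theorem~\ref{OM for single chordal}, adapted to the radial setting. The reference arc $\gamma^0=[0,1]$ in $\mathbb{D}$ has radial Loewner energy $0$, and the target arc is $\gamma=f(\gamma^0)$, where $f$ is a conformal map normalized by $|f'(0)f'(1)^6|=1$. As in the chordal case, I would iterate GCR--COV--GCR using Lemma~\ref{lem property of single radial}, which produces both the boundary weight $b(\kappa)$ at $x$ and the interior weight $\tilde b(\kappa)$ at $y=0$, to write
\begin{align*}
Q^{\kappa}_{\mathbb{D};x,0}(O_\epsilon(\gamma)) = \int \mathbbm{1}_{\{\eta \subset A_\epsilon\}}\, \frac{Y_{A,\mathbb{D};1,0}(\eta)}{Y_{\tilde A,\mathbb{D};x,0}(f(\eta))}\, |f'(1)|^{-b(\kappa)}|f'(0)|^{-\tilde b(\kappa)}\, dQ^{\kappa}_{\mathbb{D};1,0}(\eta).
\end{align*}
Dividing by $Q^{\kappa}_{\mathbb{D};1,0}(O_\epsilon(\gamma^0))$ and letting $\epsilon\to 0$, the uniform convergence supplied by Lemma~\ref{3 uniform convergence} turns the integrand into a constant and gives the exponent
\[
\tfrac{c(\kappa)}{2}\bigl[\mathcal{B}(\gamma^0,\mathbb{D}\setminus A;\mathbb{D})-\mathcal{B}(\gamma,\mathbb{D}\setminus \tilde A;\mathbb{D})\bigr] - b(\kappa)\log|f'(1)| - \tilde b(\kappa)\log|f'(0)|.
\]

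Next I would invoke the radial conformal deformation formula of Lemma~\ref{conformal deformation of radial Loewner energy} to substitute
\[
\mathcal{B}(\gamma^0,\mathbb{D}\setminus A;\mathbb{D})-\mathcal{B}(\gamma,\mathbb{D}\setminus \tilde A;\mathbb{D}) = \mathcal{H}^R(\gamma)-\mathcal{H}^R(\gamma^0) - \tfrac{1}{4}\log|f'(1)| + \tfrac{1}{8}\log|f'(0)|.
\]
This yields the claimed main term $\tfrac{c(\kappa)}{2}(\mathcal{H}^R(\gamma)-\mathcal{H}^R(\gamma^0))$ together with a residue
\[
-\bigl(\tfrac{c(\kappa)}{8}+b(\kappa)\bigr)\log|f'(1)| + \bigl(\tfrac{c(\kappa)}{16}-\tilde b(\kappa)\bigr)\log|f'(0)|.
\]

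The last step is to verify that the normalization $|f'(0)f'(1)^6|=1$ is exactly what makes this residue vanish. Substituting $\log|f'(0)|=-6\log|f'(1)|$ collapses the residue to $\bigl(-\tfrac{c(\kappa)}{2}-b(\kappa)+6\tilde b(\kappa)\bigr)\log|f'(1)|$. A direct computation with $b(\kappa)=\tfrac{6-\kappa}{2\kappa}$, $\tilde b(\kappa)=\tfrac{(6-\kappa)(\kappa-2)}{8\kappa}$, and $c(\kappa)=-\tfrac{(6-\kappa)(8-3\kappa)}{2\kappa}$ gives $-b(\kappa)+6\tilde b(\kappa)=c(\kappa)/2$, so the bracket vanishes identically and the limit reduces to the stated formula. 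The only real obstacle is careful sign bookkeeping: the push-forward conventions of COV, the orientation of the Radon--Nikodym derivative in GCR, and the sign of the $\log|f'(y)|$ term in the radial deformation lemma must all be aligned; once this is done, the $F_\kappa$-free form is an algebraic consequence of the chosen gauge on $f$.
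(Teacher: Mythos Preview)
Your proposal is correct and follows essentially the same approach as the paper's proof: the same GCR--COV--GCR chain, the same appeal to Lemma~\ref{3 uniform convergence} for uniform convergence, and the same use of Lemma~\ref{conformal deformation of radial Loewner energy} to rewrite the Brownian loop difference. In fact you go slightly further than the paper, which stops at the line $\frac{c(\kappa)}{24}(I^R(\gamma)-I^R(\gamma^0))-\frac{3(6-\kappa)}{16}\log|f'(1)|-\frac{6-\kappa}{32}\log|f'(0)|$ and leaves the cancellation under the gauge $|f'(0)f'(1)^6|=1$ implicit; your verification that $-b(\kappa)+6\tilde b(\kappa)=c(\kappa)/2$ makes this explicit.
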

\begin{proof}
    Fix $\epsilon_0$, we set $A=A_{\epsilon_0}$, $\tilde{A}=f(A_{\epsilon_0})$. 
    By conformal restriction and conformal covariance, we have
    \begin{align*}
        Q_{\m D; x, y}^\kappa(O_\epsilon(\gamma))&=\int \mathbbm{1}_{\{\tilde{\eta} \subset \tilde{A_\epsilon}\}} \dd Q_{\m D; x, y}^\kappa (\tilde{\eta}) = \int \mathbbm{1}_{\{\tilde{\eta} \subset \tilde{A_\epsilon}\}} \left(Y_{\tilde{A},\m D;x,y}(\tilde{\eta})\right)^{-1} \dd Q^\kappa_{\tilde{A}; x, y} (\tilde{\eta})\\
        &= \int \mathbbm{1}_{\{\eta \subset A_\epsilon\}} \left(Y_{\tilde{A},\m D;x,y}(f(\eta))\right)^{-1} \abs{f'(1)}^{-b(\kappa)}\abs{f'(0)}^{-\tilde{b}(\kappa)}\dd Q^\kappa_{A; 1, 0} (\eta) \\
        &= \int \mathbbm{1}_{\{\eta \subset A_\epsilon\}} \frac{Y_{A, \m D ; 1, 0}(\eta)}{Y_{\tilde{A},\m D;x,y}(f(\eta))}\abs{f'(1)}^{-b(\kappa)}\abs{f'(0)}^{-\tilde{b}(\kappa)}\dd Q_{\m D; 1, 0}^\kappa (\eta).
  \end{align*}
        
  We have that 
  \begin{align*}
      &\frac{Y_{A, \m D ; 1, 0}(\eta)}{(Y_{\tilde{A},\m D;x,y}(f(\eta))} \abs{f'(1)}^{-b(\kappa)}\abs{f'(0)}^{-\tilde{b}(\kappa)}\\
      &= \exp \left( \frac{c(\kappa
      )}{2} ( \mc B(\eta, \m D \backslash A; \m D) -  \mc B(f(\eta), \m D \backslash \tilde{A}; \m D) )-b(\kappa) \log\abs{f'(1)}-\tilde{b}(\kappa)\log\abs{f'(0)} \right) .
  \end{align*}
Using the uniform convergence from Lemma~\ref{3 uniform convergence}, we have
\begin{equation*}
    \sup_{\{\eta \subset A_\epsilon\}} \left( \mc B(\eta, \m D \backslash A; \m D) -  \mc B(f(\eta), \m D \backslash \tilde{A}; \m D) \right)  \stackrel{\epsilon \rightarrow 0+}{\longrightarrow} \mc B(\g^0, \m D \backslash A; \m D) -  \mc B(\gamma, \m D \backslash \tilde{A}; \m D),
\end{equation*}
\begin{equation*}
    \inf_{\{\eta \subset A_\epsilon\}} \left( \mc B(\eta, \m D \backslash A; \m D) -  \mc B(f(\eta), \m D \backslash \tilde{A}; \m D) \right)  \stackrel{\epsilon \rightarrow 0+}{\longrightarrow} \mc B(\g^0, \m D \backslash A; \m D) -  \mc B(\gamma, \m D \backslash \tilde{A}; \m D).
\end{equation*}

Hence we have
\begin{align*}
    &\frac{Q_{\m D; x, y}^\kappa(O_\epsilon(\gamma))}{Q_{\m D; 1, 0}^\kappa(O_\epsilon(\g^0))}
    =\frac{ \int \mathbbm{1}_{\{\eta \subset A_\epsilon\}} \frac{Y_{A, \m D ; 1, 0}(\eta)}{Y_{\tilde{A},\m D;x,y}(f(\eta))} \abs{f'(1)}^{-b(\kappa)}\abs{f'(0)}^{-\tilde{b}(\kappa)}\dd Q_{\m D; 1, 0}^\kappa (\eta)}{\int \mathbbm{1}_{\{\eta \subset A_\epsilon\}} \dd Q_{\m D; 1, 0}^\kappa (\eta)}\\
    &\stackrel{\epsilon \rightarrow 0+}{\longrightarrow}
    \exp \left( \frac{c(\kappa)}{2}(\mc B(\g^0, \m D \backslash A; \m D) -  \mc B(\gamma, \m D \backslash \tilde{A}; \m D))-b(\kappa) \log\abs{f'(1)}-\tilde{b}(\kappa)\log\abs{f'(0)}\right).
\end{align*}
By the conformal deformation of radial Loewner energy, items in the above exponent 
\begin{align*}
     &=\frac{c(\kappa)}{24}(I^R_{\m D;x, y}(\gamma)-I^R_{\m D;1, 0}(\g^0))-\frac{3(6-\kappa)}{16} \log\abs{f'(1)}-\frac{(6-\kappa)}{32} \log\abs{f'(0)}.
\end{align*}
\end{proof}
\subsection{Forced case}
For $\k\in(0,4]$ and $\rho >-2$, define
\begin{equation*}
    b_1=b_1(\k,\rho)=b(\k)=\frac{6-\k}{2\k};
\end{equation*}
\begin{equation*}
    b_2=b_2(\k,\rho)=\frac{\rho(\rho+4-\k)}{4\k};
\end{equation*}
\begin{equation*}
    b_3=b_3(\k,\rho)=\frac{\rho}{\k};
\end{equation*}
\begin{equation*}
    \a=\a(\k,\rho)=b_1+b_2+b_3=\frac{(\rho+2)(\rho+6-\k)}{4\k};
\end{equation*}
\begin{equation*}
    \b=\b(\k,\rho)=\frac{(\rho+\k-2)(\rho+6-\k)}{8\k};
\end{equation*}
On any simply connected domain $D$ with a boundary points $x$ at which $\partial  D$ is analytic and an interior point $y$, the weighted radial $\SLE_\kappa(\rho)$ measure on arcs in $(D; x,y)$ (with the force point $x$) is defined to be
\begin{equation*}
    Q^{\kappa,\rho}_{D; x,y}=H^{\kappa,\rho}_{D; x,y} \times \mu^{\kappa,\rho}_{D; x,y},
\end{equation*}
where $\mu^{\kappa,\rho}_{D; x,y}$ is the radial $\SLE_\kappa(\rho)$ probability measure, and $H^{\kappa,\rho}_{D; x,y}$ is determined by the scaling rule for any conformal map on $D$
\begin{equation*}
    H^{\kappa,\rho}_{D; x,y}= \abs{f'(x)}^\a\abs{f'(y)}^{\b} H^{\kappa,\rho}_{f(D); f(x),f(y)}
\end{equation*}
and the kernel

\begin{equation*}
    H^{\kappa,\rho}_{\m D;1, 0} = 1.
\end{equation*}
We have the conformal covariance rule 
\begin{equation*}
    f \circ Q^{\kappa,\rho}_{D; x,y} = \abs{f'(x)}^\a\abs{f'(y)}^{\b} Q^{\kappa,\rho}_{f(D); f(x),f(y)}.
\end{equation*}
Similarly as in the chordal case, when $\rho\geq\k/2-2$ and $\k \in (0,4]$, we can prove the two-sided conformal restriction.
\begin{lem}\label{lem property of forced radial}
    Suppose $D \subset D' \subsetneqq \m C$ are simply connected domains. Suppose that $x$ is a boundary point of $\partial D$ and $\partial D'$ and $y$ is an interior point of $D$ and $D'$. Besides, $\partial D$ and $\partial D'$ are analytic and agree in neighborhoods of $x$ while $ D$ and $ D'$ are analytic and agree in neighborhoods of $y$. Then $Q^{\kappa,\rho}_{D; x,y}$ is absolutely continuous with respect to $Q^{\kappa,\rho}_{D'; x,y}$ with Radon--Nikodym derivative 
    \begin{equation*}
        \mathbbm{1}_{\{\gamma \subset D\}} Y_{D,D';x,y}(\gamma) = \mathbbm{1}_{\{\gamma \subset D\}} \exp\left( \frac{c(\kappa)}{2} \mc B(\gamma, D'\backslash D; D')\right),
    \end{equation*}
    where $Y$ is a conformal invariant.
\end{lem}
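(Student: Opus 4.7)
The plan is to mimic the approach used for Lemma~\ref{lem property of forced chordal}, adapting the Dub\'edat-type local martingale to the radial setting as in \cite{Wu15_conformal_restriction_radial,JL18}. Throughout the argument, let $\gamma$ denote the candidate curve and let $g_t:D'\backslash\gamma[0,t]\to\m D$ be the radial Loewner uniformizing map (normalized by $g_t(y)=0$, $g_t'(y)>0$) associated with the radial $\SLE_\k(\rho)$ in $D'$, with driving function $U_t$ on $\partial\m D$ and force-point image $V_t$.

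First I would introduce the auxiliary conformal map $\psi:D\to\m D$ (with the same normalization at $y$ as used to define the radial Loewner flow in $D$), and set $\psi_t:=\tilde g_t\circ\psi\circ g_t^{-1}$, where $\tilde g_t$ is the radial uniformizing map for the image $\psi(\gamma[0,t])$. On the event $\{\gamma\subset D\}$ this is a conformal map from a subdomain of $\m D$ into $\m D$ fixing $0$. Let $\phi_t$ be a lift of $\psi_t$ to the universal cover, i.e.\ $e^{\ii\phi_t(z)}=\psi_t(e^{\ii z})$. The chain rule for the radial Loewner equation then produces explicit SDEs for $\phi_t(U_t)$ and $\phi_t(V_t)$ driven by the Brownian motion of $U_t$, analogous to those derived in the proof of Lemma~\ref{conformal deformation of radial Loewner energy}.

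Next I would write down a radial analogue of Dub\'edat's local martingale of the schematic form
\begin{equation*}
M_t=\phi_t'(U_t)^{b_1}\phi_t'(V_t)^{b_2}\left|\frac{\sin((\phi_t(U_t)-\phi_t(V_t))/2)}{\sin((U_t-V_t)/2)}\right|^{2b_3}\psi_t'(0)^{\b}\exp\left(c(\k)\int_0^t\Schwarzian\phi_s(U_s)\,\dd s\right),
\end{equation*}
and verify via It\^o's formula that $M_t$ is a local martingale under the law of radial $\SLE_\k(\rho)$ in $D'$. The exponents $b_1,b_2,b_3,\b$ are precisely those required to cancel the drift terms produced jointly by the Schwarzian correction, the radial Loewner drift, and the $\rho$-force drift. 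Weighting $\SLE_\k(\rho)$ in $D'$ by $M_t$ and taking $t\to\infty$ on $\{\gamma\subset D\}$ recovers the law of $\SLE_\k(\rho)$ in $D$, modulo the boundary-derivative prefactors absorbed by the scaling rule $H^{\k,\rho}_{D;x,y}=|f'(x)|^\a|f'(y)|^{\b}H^{\k,\rho}_{f(D);f(x),f(y)}$.

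Finally I would identify the Schwarzian compensator with the Brownian loop mass through the standard identity
\begin{equation*}
\int_0^\infty\Schwarzian\phi_s(U_s)\,\dd s=-12\,\mc B(\gamma, D'\backslash D; D'),
\end{equation*}
established for the radial Loewner flow in \cite{Wu15_conformal_restriction_radial,JL18}, together with boundary asymptotics of $\phi_t$, $\psi_t$ as $\gamma$ approaches the interior target $y$. Combined with the conformal covariance of $Q^{\k,\rho}_{D;x,y}=H^{\k,\rho}_{D;x,y}\times\mu^{\k,\rho}_{D;x,y}$, all boundary derivative factors cancel and one obtains precisely $\mathbbm{1}_{\{\gamma\subset D\}}\exp\left(\frac{c(\k)}{2}\mc B(\gamma, D'\backslash D; D')\right)$ as Radon--Nikodym derivative, and the conformal invariance of $Y$ follows from conformal invariance of the Brownian loop measure. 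The main obstacle is the martingale verification in step two: one must simultaneously handle the boundary weight at $U_t$, the force-point weight at $V_t$, the sine-kernel interaction between them (which replaces the chordal rational factor and produces additional trigonometric drift terms), the interior weight at $0$, and the Schwarzian correction from the covering map --- an It\^o calculation substantially heavier than in the chordal case treated in Lemma~\ref{lem property of forced chordal}.
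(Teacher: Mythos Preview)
Your proposal is correct and matches the paper's approach: the paper does not give a detailed proof of this lemma but simply remarks ``Similarly as in the chordal case, when $\rho\geq\kappa/2-2$ and $\kappa\in(0,4]$, we can prove the two-sided conformal restriction,'' pointing back to the Dub\'edat-type local martingale used for Lemma~\ref{lem property of forced chordal}. Your sketch is precisely the natural radial adaptation of that argument (covering map $\phi_t$, sine-kernel interaction, interior weight $\psi_t'(0)^{\beta}$, Schwarzian compensator), and is therefore exactly what the paper has in mind; the only caveat is that in the radial setting the Schwarzian integral does not equal $-12\,\mc B$ on the nose but carries an additive $\log\psi_t'(0)$ correction (visible in the proof of Lemma~\ref{conformal deformation of radial Loewner energy}), which is then absorbed by the $\psi_t'(0)^{\beta}$ factor and the scaling rule for $H^{\kappa,\rho}$---you allude to this with ``together with boundary asymptotics of $\phi_t,\psi_t$,'' so the bookkeeping is consistent.
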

Define radial $\rho$-Loewner energy as in \cite{Kru_rho_energy} to be
\begin{equation*}
    I^{R,\rho}_{D;x,y}(\g)=\frac{1}{2}\int_0^\infty (\partial_t W_t-\frac{\rho}{2}\cot\frac{ W_t-V_t}{2})^2 \dd t,
\end{equation*}
when $W$ is absolutely continuous and is $\infty$ otherwise, where $V:[0,\infty)\rightarrow \m R$ is a continuous function determined by $V(0)=0$ and $e^{\ii V_t}=g_t(1)$.
Define the radial $\rho$-Loewner potential to be
\begin{equation}\label{eq forced radial potential}
    \mc H^{R,\rho}_{\m D;x,0}(\gamma):=\frac{1}{12}I^{R,\rho}_{\m D; x,0}(\gamma).
\end{equation}
Let $\gamma^0_\rho$ denote the radial $\SLE_0(\rho)$ in the unit disk $(\m D;1,0)$, whose radial $\rho$-Loewner energy reaches the infimum. For any $\epsilon > 0$, let $A_\epsilon$ denote a neighborhood of $\gamma^0_\rho$ in $\m D$, which we need to have one chord and part of the unit circle as its boundary and decrease together with $\epsilon$ and agree with $\m D$ or $\partial \m D$ in the neighborhoods of $0$ or $1$, respectively.
Below, we choose $A_\epsilon$ to be the domain in $\m D$ with distance from $\gamma^0_\rho$ less than $\e$.
Let $\gamma$ be an analytic arc in $(\m D;x\in \partial \m D,0)$ such that $\gamma=f(\gamma^0_\rho)$ for some conformal map $f$ defined on some $A=A_{\epsilon_0}$ for some $\epsilon_0$ with $f(0)=0$ and $f(1)=x$. Set $\tilde{A}=f(A)$. For $\epsilon < \epsilon_0$, set $\tilde{A_\epsilon}=f(A_\epsilon)$.
Let us introduce the neighborhoods of $\g^0$ and $\gamma$ given by 
\begin{align*}
    &O_\epsilon(\g^0):=\{\text{simple~arcs~in~}(A_\epsilon;1,0)\},\\
    &O_\epsilon(\gamma):=\{\text{simple~arcs~in~}(\tilde{A_\epsilon};x,0)\}.
\end{align*}
We call the sets of simple arcs of the form $O_\epsilon(\gamma)$ as admissible neighborhoods.
In this case, Lemma~\ref{lem Hausdorff topology} holds by replacing $\g^0$ by $\g^0_\rho$ compared with the single radial case.
Similarly, we can prove the following.
\begin{thm}\label{OM for radial variants}
    Let $\kappa \leq 4$ and $Q_{\m D; x, y}^{\kappa,\rho}$ denote the radial $\SLE_\k$ measure in $(\m D; x, y)$. For any analytic simple arc $\gamma$ connecting $x$ and $y$ such that $\gamma=f(\gamma_0^\rho)$ for some conformal map $f$ defined on some $A=A_{\epsilon_0}$ for some $\epsilon_0$ and define a collection of admissible neighborhoods $(O_\epsilon(\gamma))_{0 < \epsilon \ll 1}$  as above, we have that 
    \begin{align}
        \lim_{\epsilon \rightarrow 0} \frac{Q_{\m D; x, 0}^{\kappa,\rho}(O_\epsilon(\gamma))}{Q_{\m D; 1,0}^{\kappa,\rho}(O_\epsilon(\gamma^0_\rho))}=\exp \left( \frac{c(\kappa)}{2}(\mc H^{R,\rho}_{D;x,y}(\gamma)-\mc H^{R,\rho}_{D;x,y}(\gamma^0_\rho))+F^{\k,\rho}(\g)\right),
    \end{align}
 with 
 \begin{align*}
     F^{\k,\rho}(\g)=&\frac{(\rho+2)(3\rho\k+18\k-26\rho-108)}{192}\log\abs{f'(1)}\\
     &+\frac{3(\rho+2)^2\k-2(13\rho^2+52\rho+36)}{384}\log\abs{f'(0)}.
 \end{align*}
 \end{thm}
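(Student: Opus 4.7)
The plan is to mirror the proof of Theorem~\ref{OM for single radial} with the forced ingredients substituted in: conformal covariance with boundary weight $\a$ and interior weight $\b$, generalized conformal restriction from Lemma~\ref{lem property of forced radial}, the uniform convergence of Brownian loop measures from Lemma~\ref{3 uniform convergence}, together with a new conformal deformation formula for the radial $\rho$-Loewner potential $\mc H^{R,\rho}$ playing the role of Lemma~\ref{conformal deformation of rho-Loewner potential} in the chordal $\rho$-case and of Lemma~\ref{conformal deformation of radial Loewner energy} in the radial unforced case. I would first fix $\e_0$, set $A=A_{\e_0}$, $\tilde A=f(A_{\e_0})$, and apply GCR--COV--GCR exactly as in Theorem~\ref{OM for single radial} to rewrite
\begin{equation*}
Q^{\k,\rho}_{\m D;x,0}(O_\e(\g)) = \int \mathbbm{1}_{\{\eta \subset A_\e\}} \frac{Y_{A,\m D;1,0}(\eta)}{Y_{\tilde A,\m D;x,0}(f(\eta))} |f'(1)|^{-\a}|f'(0)|^{-\b}\, \dd Q^{\k,\rho}_{\m D;1,0}(\eta).
\end{equation*}
Dividing by $Q^{\k,\rho}_{\m D;1,0}(O_\e(\g^0_\rho))$, substituting the exponential form of $Y$ from Lemma~\ref{lem property of forced radial}, and invoking Lemma~\ref{3 uniform convergence}, the quotient converges to
\begin{equation*}
\exp\!\left(\tfrac{c(\k)}{2}\bigl(\mc B(\g^0_\rho,\m D\backslash A;\m D)-\mc B(\g,\m D\backslash \tilde A;\m D)\bigr)-\a\log|f'(1)|-\b\log|f'(0)|\right).
\end{equation*}

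To convert the loop mass difference into a potential difference, I would establish the conformal deformation identity
\begin{equation*}
\mc H^{R,\rho}_{\m D;f(x),0}(f(\g))-\mc H^{R,\rho}_{\m D;x,0}(\g)=e_1\log|f'(x)|+e_2\log|f'(0)|+\mc B(\g,\m D\backslash A;\m D)-\mc B(f(\g),\m D\backslash f(A);\m D)
\end{equation*}
for explicit coefficients $e_1,e_2$, by combining the techniques of Lemma~\ref{conformal deformation of rho-Loewner potential} and Lemma~\ref{conformal deformation of radial Loewner energy}. The recipe is to set $h_t=\tilde g_t\circ\psi\circ g_t^{-1}$ with $\psi:A\to\m D$ conformal fixing $x$ and $0$, to use $\partial_t a=h_t'(W_t)^2$ together with the induced evolution of $(\tilde W_t,\tilde V_t)$, and to expand the square in $I^{R,\rho}_{A;x,0}(\g)$; this splits as $I^{R,\rho}_{\m D;x,0}(\g)$ plus a total derivative $\partial_t G_t$ and a Schwarzian piece $-4S h_t(W_t)$ whose integral equals $12\mc B(\g,\m D\backslash A;\m D)$ via the path decomposition of the Brownian loop measure. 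Here $G_t$ picks up boundary terms in $\log h_t'(W_t)$ and $\log|h_t(W_t)-h_t(V_t)|$ (exactly as in the chordal $\rho$-case) together with an interior term from the radial normalization $g_t'(0)=e^{-t}$; applying the chain rule at $x$ and at $0$ to the rotation $M=\psi_2\circ f\circ\psi_1^{-1}\in\operatorname{Aut}(\m D)$ then identifies $e_1$ and $e_2$ explicitly. Substituting the identity into the previous display, the exponent becomes $\tfrac{c(\k)}{2}(\mc H^{R,\rho}(\g)-\mc H^{R,\rho}(\g^0_\rho))-(\a+\tfrac{c(\k)}{2}e_1)\log|f'(1)|-(\b+\tfrac{c(\k)}{2}e_2)\log|f'(0)|$, and a direct algebraic check, parallel to the one that produces $F^{\k,\rho}$ in Theorem~\ref{OM for chordal variants}, will match these $\log|f'|$-coefficients with those announced.

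The main obstacle is the interior coefficient $e_2$. In the chordal $\rho$-case all correction terms are boundary contributions, but the radial normalization $g_t(0)=0$, $g_t'(0)=e^{-t}$ here forces a genuine interior piece whose joint $\k$-$\rho$-dependence must be tracked carefully through the change of time and the Schwarzian expansion in order to land on the announced coefficient $\frac{3(\rho+2)^2\k-2(13\rho^2+52\rho+36)}{384}$. Everything else is a routine combination of the forced-chordal and unforced-radial arguments already carried out earlier in the paper.
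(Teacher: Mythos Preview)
Your proposal is correct and takes essentially the same approach as the paper, which for this theorem writes only ``Similarly, we can prove the following.'' You have correctly unpacked what that ``similarly'' entails: the GCR--COV--GCR rewriting from Theorem~\ref{OM for single radial} with the forced covariance weights $(\a,\b)$ and Lemma~\ref{lem property of forced radial}, the uniform convergence from Lemma~\ref{3 uniform convergence}, and a conformal deformation formula for $\mc H^{R,\rho}$ obtained by merging the computations of Lemma~\ref{conformal deformation of rho-Loewner potential} (the $G_t$-term with force point) and Lemma~\ref{conformal deformation of radial Loewner energy} (the radial covering map $\phi_t$ and the interior contribution from $\psi_t'(0)$).
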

 \subsection{Multiple case}
For $\k\in(0,4]$, an integer $n\geq2$ and $\mu \in \m R$, define
\begin{equation*}
    \tilde b_n=\tilde b_n(\k,\mu)=\frac{n^2-1-\mu^2}{2\k}.
\end{equation*}
On any simply connected domain $D$ with $n$ boundary points $\bar x=(x_1,\ldots,x_n)$ at which $\partial  D$ is analytic and an interior point $y$, the weighted multi-radial $\SLE_\kappa$ measure with spiraling rate $\mu$ on multi-arcs in $(D; \bar x,y)$ is defined to be
\begin{equation*}
    Q^{n,\kappa,\mu}_{D; \bar x,y}=H^{n,\kappa,\mu}_{D; \bar x,y} \times \mu^{n,\kappa,\mu}_{D;\bar x,y},
\end{equation*}
where $\mu^{n,\kappa,\mu}_{D;\bar x,y}$ is the multi-radial $\SLE_\kappa$ probability measure with spiraling rate $\mu$, and $H^{n,\kappa,\mu}_{D; \bar x,y}$ is determined by the scaling rule for any conformal map on $D$
\begin{equation*}
    H^{n,\kappa,\mu}_{D; \bar x,y}= \prod_{j=1}^n\abs{f'(x_j)}^b\abs{f'(y)}^{\tilde b+\tilde b_n} H^{n,\kappa,\mu}_{f(D);  f(\bar x),f(y)}
\end{equation*}
and the kernel

\begin{equation*}
    H^{n,\kappa,\mu}_{\m D;\bar x, 0} = \prod_{1\leq j< \ell \leq n}\left(\sin\frac{\t_j-\t_\ell}{2}\right)^{2/\k}\times \exp\left(\frac{\mu}{\k}\sum_{j=1}^n\t_j\right),
\end{equation*}
where $\bar x=(e^{2\pi\ii \t_j})_{j=1}^n$.
We have the conformal covariance rule 
\begin{equation*}
    f \circ Q^{n,\kappa,\mu}_{D;\bar x,y} = \prod_{j=1}^n\abs{f'(x_j)}^b\abs{f'(y)}^{\tilde b+\tilde b_n} Q^{n,\kappa,\mu}_{f(D); f(\bar x),f(y)}.
\end{equation*}
From \cite{HPW_multiradial_perturbation}, we have the following conformal restriction.
\begin{lem}\label{lem property of multiple radial}
    Suppose $D \subset D' \subsetneqq \m C$ are simply connected domains. Suppose that $\bar x$ are boundary points of $\partial D$ and $\partial D'$ and $y$ is an interior point of $D$ and $D'$. Besides, $\partial D$ and $\partial D'$ are analytic and agree in neighborhoods of $\bar x$ while $ D$ and $ D'$ are analytic and agree in neighborhoods of $y$. Then $Q^{n,\kappa,\mu}_{D; \bar x,y}$ is absolutely continuous with respect to $Q^{n,\kappa,\mu}_{D'; \bar x,y}$ with Radon--Nikodym derivative
    \begin{equation*}
        \mathbbm{1}_{\{\gamma \subset D\}} Y_{D,D';x,y}(\gamma) = \mathbbm{1}_{\{\gamma \subset D\}} \exp\left( \frac{c(\kappa)}{2} \mc B(\gamma, D'\backslash D; D')\right),
    \end{equation*}
    where $Y$ is a conformal invariant. 
\end{lem}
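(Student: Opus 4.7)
The plan is to follow the strategy used for the analogous single-radial statement (Lemma~\ref{lem property of single radial}), adapted to the $n$ coupled arms and the spiraling rate $\mu$; a complete proof is carried out in \cite{HPW_multiradial_perturbation}. The argument is a Girsanov-type manipulation of the partition function. First, I would use the conformal covariance rule displayed just before the lemma to reduce to the case in which $D'$ is obtained from $D$ by a small analytic perturbation of the boundary away from the fixed neighborhoods of $\bar x$ and $y$, with uniformizing map $\psi:D'\to D$ that fixes $\bar x$ and $y$.

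Next, I would introduce the candidate Radon--Nikodym density
\begin{equation*}
    Y_t = \exp\!\left(\tfrac{c(\k)}{2}\,\mc B(\g[0,t], D'\setminus D; D')\right)\,\frac{H^{n,\k,\mu}_{D_t;\bar x_t,\,y}}{H^{n,\k,\mu}_{D'_t;\bar x_t,\,y}},
\end{equation*}
where $D_t\subset D'_t$ are the slit domains after growing the $n$ tips up to time $t$ and $\bar x_t$ denotes the driven positions. I would verify that $Y_t$ is a local martingale under the joint multi-radial Loewner evolution driven by $\mu^{n,\k,\mu}_{D';\bar x,y}$ up to the exit time of $D$. Two inputs make the It\^o drifts cancel: (i) the multi-radial partition function $H^{n,\k,\mu}$ solves the system of hypoelliptic PDEs (null-vector equations) with central charge $c(\k)$, and (ii) the time-derivative of the Brownian loop term decomposes, by \cite{LW2004loupsoup}, into Schwarzian contributions of $\psi_t:D'_t\to D_t$ at each marked point, which match exactly the Schwarzian terms produced by the partition-function ratio. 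Optional stopping at the exit time of $D$ then yields the claimed density on $\{\g\subset D\}$, after absorbing the prefactor $Y_0=H^{n,\k,\mu}_{D;\bar x,y}/H^{n,\k,\mu}_{D';\bar x,y}$ into the scaling identity that relates $Q^{n,\k,\mu}$ with its probability-measure factor. The conformal invariance of $Y$ is immediate from that of $\mc B(\cdot,\cdot;\cdot)$ recalled in the preliminaries.

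The main obstacle is the simultaneous accounting at all $n$ boundary points and at the interior target. The spiraling factor $\exp((\mu/\k)\sum_j\t_j)$ couples the $n$ driving processes and shifts the interior conformal weight from $\tilde b$ to $\tilde b+\tilde b_n$ with $\tilde b_n=(n^2-1-\mu^2)/(2\k)$; one must check that the interior Schwarzian contribution $\partial_t\mc B(\cdot,\cdot;D')$ at $y$ is matched precisely by the $(\tilde b+\tilde b_n)$-term in the null-vector PDE satisfied by $H^{n,\k,\mu}$. This book-keeping---the multi-radial analogue of the Virasoro/BPZ calculation used in the chordal and single-radial cases---is where the specific form of $\tilde b_n$ enters and constitutes the main technical content of \cite{HPW_multiradial_perturbation}.
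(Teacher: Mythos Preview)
The paper does not supply its own proof of this lemma: it simply attributes the statement to \cite{HPW_multiradial_perturbation} and moves on. Your proposal correctly identifies the same reference and gives a reasonable outline of the standard local-martingale argument (partition-function ratio times the exponentiated Brownian-loop mass, It\^o calculus matching Schwarzian terms against null-vector PDE drifts, optional stopping). In that sense your sketch is not in tension with the paper---there is nothing in the paper to compare it to beyond the citation.

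One small wrinkle in your write-up: the reduction step is phrased oddly. You speak of a uniformizing map $\psi:D'\to D$ that ``fixes $\bar x$ and $y$,'' but a conformal isomorphism between $D'$ and its proper subdomain $D$ cannot fix $n$ boundary points and an interior point simultaneously, and in any case that is not the map one actually uses. The standard setup (as in the chordal and single-radial cases in this paper, e.g.\ the martingale displayed before Lemma~\ref{lem property of forced chordal}) is to normalize $D'=\m D$, write $D=\m D\setminus K$, and work with $\psi:D\to\m D$ and its time-evolved version $h_t=\tilde g_t\circ\psi\circ g_t^{-1}$. The local martingale is then a product of powers of $h_t'$ at the $n$ driving points and at the interior point, times the Schwarzian integral. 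This is only a cosmetic slip; the rest of your outline (cancellation of drifts via the null-vector equations for $H^{n,\k,\mu}$, the role of $\tilde b_n$ at the interior, conformal invariance of $Y$ from that of the loop measure) is the right shape for the argument carried out in \cite{HPW_multiradial_perturbation}.
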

Define the multi-radial Loewner energy as in \cite{HPW_multiradial_LDP}.
For $\bar x^{0}=(e^{2\pi \ii m/n})_{m=1}^n$, let $\gamma^{0,\mu}_n$ denote the multi-radial $\SLE_0$ with spiraling rate $\mu$ in the unit disk $(\m D;\bar x^0,0)$, whose multi-radial Loewner energy reaches the infimum. For any $\epsilon > 0$, let $A_\epsilon$ denote a neighborhood of $\gamma^0_n,\mu$ in $\m D$, which we need to have $n$ chords and part of the unit circle as its boundary and decrease together with $\epsilon$ and agree with $\m D$ or $\partial \m D$ in the neighborhoods of $\bar x^0$ or $1$, respectively.
Below, we choose $A_\epsilon$ to be the domain in $\m D$ with distance from $\gamma^{0,\mu}_n$ less than $\e$.
Let $\gamma$ be an analytic arc in $(\m D;\bar x,0)$ such that $\gamma=f(\gamma^{0,\mu}_n)$ for some conformal map $f$ defined on some $A=A_{\epsilon_0}$ for some $\epsilon_0$ with $f(0)=0$, $f(\bar x^0)=\bar x$. Set $\tilde{A}=f(A)$. For $\epsilon < \epsilon_0$, set $\tilde{A_\epsilon}=f(A_\epsilon)$.
Let us introduce the neighborhoods of $\g^{0,\mu}_n$ and $\gamma$ given by 
\begin{align*}
    &O_\epsilon(\g^{0,\mu}_n):=\{\text{simple~multi-arcs~in~}(A_\epsilon;\bar x^0,0)\},\\
    &O_\epsilon(\gamma):=\{\text{simple~multi-arcs~in~}(\tilde{A_\epsilon};\bar x,0)\}.
\end{align*}
We call the sets of simple multi-arcs of the form $O_\epsilon(\gamma)$ as admissible neighborhoods.
In this case, Lemma~\ref{lem Hausdorff topology} holds as follows. As the complement of the multi-arc in the unit disk contains $n$ connected components rather than two connected components, we need to choose appropriate size $\d$ of each part of the neighborhood such that the $\g_\eta=G(\g_0)$ is a multi-arc connecting $0$ and $n$ boundary points on the unit circle, but the rest is the same.

Our assumption of the existence of conformal maps requires that the tangent angles at $0$ of multi-arcs are the same. The characterization of finite-energy multi-arcs in \cite{AOP_LDP_multiradial} is that the driving functions eventually approach equal spacing around the circle. However, for the need of neighborhood basis, we also introduce analytic multi-arcs with with different tangent angles at $0$, which could be locally conformally equivalent to a the union of $n$ hyperbolic geodesic connecting $0$ and each boundary point. Then we define the admissible neighborhoods similarly using this conformal map.

For $n$ arcs in $\m D$ with connecting $\bar x=(e^{\ii \t_j})_{j=1}^n$ and $0$ and a spiral rate $\mu$, define the multi-radial Loewner potential to be
\begin{align}\label{eq multiple radial potential}
    \mc H^{n,R,\mu}_{\m D;\bar x, 0}( \gamma)=&\frac{1}{12}\mc J^{n,R,\mu}_{\m D;\bar x, 0}( \g)-\frac{1}{6}\sum_{1\leq j <\ell \leq n} \log\frac{\sin(\t_j-\t_i)}{2}-\frac{\mu}{12}\sum_{j=1}^n \t_j,
\end{align}
where $\mc J^{n,R,\mu}_{\m D;\bar x, 0}$ denotes the multi-radial Loewner energy defined in \cite{AOP_LDP_multiradial,HPW_multiradial_LDP}.
Similarly, using the calculation in the boundary perturbation, we can prove the following.

\begin{lem}\label{conformal deformation of multi-radial Loewner potentail}
       Let $\bar\g$ be a multi-arc in $(\m D;\bar x,y=0)$ with finite multi-radial Loewner energy and $A$ be a neighborhood of $\bar\g$ in $\m D$ that agrees with $\m D$ near the points $\bar x$ and $y$. Assume $f$ is a conformal map on $A$ such that $f(\g)$ is also an arc in $\m D$ and $f(A)$ agrees with $\m D$ and $\partial f(A)$ agree with $\partial \m D$ near the boundary point $f(\bar x)$ and the interior point $f(y)=0$. Then we have

    \begin{align*}
        \mc H^{n,R,\mu}_{\m D;f(\bar x), f(y)}({f(\gamma)})- \mc H^{n,R,\mu}_{\m D;\bar x, y}(\gamma)=& \mc B(\gamma, \m D\backslash A; \m D)- \mc B(f(\gamma), \m D\backslash f(A); \m D)\\
        &+\frac{1}{4}\sum_{j=1}^n\log \abs{f'(x_j)} +\frac{n^2-4-\mu^2}{24}\log \abs{f'(0)}.
    \end{align*}
    
\end{lem}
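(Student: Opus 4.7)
The plan is to parallel the proof of Lemma~\ref{conformal deformation of radial Loewner energy} in two main steps: first establish a hull-deformation formula for the multi-radial Loewner energy $\mc J^{n,R,\mu}$ itself, then combine two instances of it with the definition~\eqref{eq multiple radial potential} of the potential to extract the stated identity.

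For the first step, I would show that for a compact $\m D$-hull $K$ at positive distance from $\bar x$ and $0$, and $\psi:\m D\setminus K\to\m D$ conformal with $\psi(0)=0$, $\psi'(0)>0$,
\[
\mc J^{n,R,\mu}_{\m D\setminus K;\bar x,0}(\bar\g)-\mc J^{n,R,\mu}_{\m D;\bar x,0}(\bar\g)=3\sum_{j=1}^n\log\abs{\psi'(x_j)}+C_n(\mu)\log\abs{\psi'(0)}+12\,\mc B(\bar\g,K;\m D),
\]
for an explicit constant $C_n(\mu)$. Setting $\psi_t:=\tilde g_t\circ\psi\circ g_t^{-1}$ with its covering map $\phi_t$, the multi-radial Loewner equation provides ODEs for each lifted driving function $U^j_t$ and for $\log\psi_t'(0)$. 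Expanding the squared integrand of $\mc J$ produces, as in the single-radial proof, boundary contributions $\int \sum_j 3\,\partial_t\log\phi_t'(U^j_t)\,dt$ that telescope into $3\sum_j\log\abs{\psi'(x_j)}$, Schwarzian integrals $\int 4\,S\phi_t(U^j_t)\,dt$ that reassemble into $12\,\mc B(\bar\g,K;\m D)$ through the path decomposition of the Brownian loop measure, and extra drift contributions arising from the pairwise interaction between drivers and the spiral term $\mu$ that combine with $\partial_t\log\psi_t'(0)$ into the $C_n(\mu)\log\abs{\psi'(0)}$ term.

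For the second step, I would apply the hull identity to $\psi_1:A\to\m D$ and $\psi_2:f(A)\to\m D$, both normalized to fix $0$ with positive derivative, and subtract, using the conformal invariance $\mc J^{n,R,\mu}_{A;\bar x,0}(\bar\g)=\mc J^{n,R,\mu}_{f(A);f(\bar x),0}(f(\bar\g))$. The composition $M:=\psi_2\circ f\circ\psi_1^{-1}$ is a disk automorphism fixing $0$, hence a rotation by some angle $\phi$, so $\abs{M'}\equiv 1$, and the chain rule converts the $\psi_i'$ factors at the marked points into $\abs{f'(x_j)}$ and $\abs{f'(0)}$. Writing $x_j=e^{\ii\theta_j}$ and $f(x_j)=e^{\ii\theta_j'}$, the rotation shifts every $\theta_j$ by the common angle $\phi$, so the pairwise differences $\theta_j-\theta_\ell$ are preserved and the cross-terms $-\tfrac{1}{6}\sum_{j<\ell}\log\sin\tfrac{\theta_j-\theta_\ell}{2}$ in the potential cancel in the difference, while the spiral term $-\tfrac{\mu}{12}\sum_j\theta_j$ contributes proportionally to $n\phi$, which is itself expressible in terms of $\log\abs{f'(0)}$ via the derivatives $\psi_i'(0)$.

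The main obstacle is the bookkeeping in Step~1: pinning down the precise constant $C_n(\mu)$ requires careful tracking of the interior-drift contributions coming from the $n$-driver interaction and the spiral rate, together with the Schwarzian-at-interior contribution. Once assembled with the angle-sum correction from Step~2, one must verify that the final coefficient of $\log\abs{f'(0)}$ simplifies exactly to $(n^2-4-\mu^2)/24$; the coefficient $\tfrac{1}{4}$ of each $\log\abs{f'(x_j)}$ is, by contrast, immediate from the boundary factor $\tfrac{1}{12}\cdot 3$ after dividing by $12$.
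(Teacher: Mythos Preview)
Your strategy differs from the paper's. The paper does not derive a hull-deformation identity for $\mc J^{n,R,\mu}$ directly; instead it invokes a multi-time decomposition from \cite{HPW_multiradial_LDP} that writes $\mc H^{n,R,\mu}_{\m D;\bar x,0}(\bar\g[0,\bar T])$ as $\sum_j \mc H^R_{\m D;x_j,0}(\g_j[0,T_j])$ plus a Brownian-loop term $m_{\bar T}$ and explicit correction terms involving $g_{\bar T}'(0)$, $\phi_{\bar T,j}'$, and the time-$\bar T$ driving angles. One then writes the same decomposition for $f(\bar\g)$, subtracts, applies the already-known single-radial deformation (Lemma~\ref{conformal deformation of radial Loewner energy}) to each summand, and lets $\bar T\to\infty$. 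The cross and spiral terms in that formula are evaluated at the \emph{terminal} angles $\t_j(\bar T)$, which asymptotically equalize for both $\bar\g$ and $f(\bar\g)$, so their difference vanishes without any rotation argument.

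Your direct route is in principle viable, but there is a genuine gap in both steps. In Step~1 the hull formula you write down is incomplete: expanding the square of the multi-radial drift produces, beyond the Schwarzian and $\partial_t\log\phi_t'(U^j_t)$ pieces, boundary contributions of the form $\sum_{j<\ell}\log\dfrac{\sin((\psi\t_j-\psi\t_\ell)/2)}{\sin((\t_j-\t_\ell)/2)}$ and $\mu\sum_j(\psi\t_j-\t_j)$ (the radial analogue of the $G_t$ term in the forced-chordal Lemma~\ref{conformal deformation of rho-Loewner potential}). These depend on how $\psi$ moves the \emph{individual} boundary angles and cannot be absorbed into a single constant $C_n(\mu)\log|\psi'(0)|$. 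In Step~2 your rotation argument is misdirected: the rotation $M=\psi_2\circ f\circ\psi_1^{-1}$ relates $\arg\psi_1(x_j)$ to $\arg\psi_2(f(x_j))$, not $\t_j=\arg x_j$ to $\t_j'=\arg f(x_j)$, so the differences $\t_j'-\t_\ell'$ are \emph{not} equal to $\t_j-\t_\ell$ in general and the cross terms in the potential do not cancel by themselves. What actually happens, if you redo Step~1 correctly, is that those extra boundary terms from the energy cancel against the cross and spiral terms in the definition of $\mc H^{n,R,\mu}$ after two applications and the chain rule; the rotation then only enters through $|M'|\equiv 1$, exactly as in the single case.
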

\begin{proof}
    For $\bar t=(t_j)_{j=1}^n$ and the arcs $\g=(\g_j)_{j=1}^n$ connecting $0$ and $(x_j)_{j=1}^n$, using the conformal map $g$ below normalized at the origin with $g(0)=0$ and $g'(0)>0$:
    \begin{itemize}
        \item $g_{ t_j}^{(j)}:\m D \backslash \g^j([0,t_j])\rightarrow \m D$ for $1 \leq j\leq n$;
        \item $g_{\bar t}:\m D \backslash \cup_{j=1}^n\g_j([0,t_j])\rightarrow \m D$;
        \item $g_{\bar t,j}:\m D \backslash g_{ t_j}^{(j)}\left(\cup_{k\neq j}\g^k([0,t_k])\right)\rightarrow \m D$ for $1 \leq j\leq n$.
    \end{itemize}
    They are related such that $g_{\bar t}=g_{\bar t,j}\circ g_{ t_j}^{(j)}$. Let $\phi_{ t_j}^{(j)}$, $\phi_{\bar t}$, $\phi_{\bar t,j}$ be the covering maps of $g_{ t_j}^{(j)}$, $g_{\bar t}$, $g_{\bar t,j}$ respectively. Denote by $U_{t_j}^{j}$ the radial driving function of $\g^j$. Define the multi-time driving function of $\g_{\bar t}$ by $\t_j(\bar t)=\phi_{\bar t,j}(U_{t_j}^{j})$.
    From \cite{HPW_multiradial_LDP}, we have
    \begin{align*}
        \mc H^{n,R,\mu}_{\m D;\bar x, y}( \g[0,\bar T])=&\sum_{j=1}^n \mc H_{\m D;x_j,y}^R(\g_j[0,T_j])+ m_{\bar T}(\g) \\
        &- \frac{n^2+3n-4-\mu^2}{24}\log g_{\bar T}'(0)-\frac{1}{4}\sum_{j=1}^n\left(\log \phi_{\bar T,j}'\left(U_{t_j}^{(j)}\right)-\frac{1}{2}\log g_{\bar T,j}'(0)\right)\\
        &-\frac{1}{6}\sum_{1\leq j <\ell \leq n} \log\frac{\sin(\t_j(\bar T)-\t_\ell(\bar T))}{2}-\frac{\mu}{12}\sum_{j=1}^n \t_j(\bar T),
    \end{align*}
    where $m_{\bar t}(\g)$ is a Brownian loop term defined with $m_{\bar 0}=0$ and using
    \begin{equation*}
        \dd m_{\bar t}(\g) =\sum_{j=1}^n \left(-\frac{1}{3}\Schwarzian \phi_{\bar t,j}\left(U_{t_j}^{(j)}\right)+\frac{1}{6}\left(1-\phi_{\bar t,j}'\left(U_{t_j}^{(j)}\right)\right)\right)\dd t_j.
    \end{equation*}
    Using the similar formula for $f(\g[0,\bar T])$, compare the difference and let $\bar T \to \infty$, we have
    \begin{align*}
        \mc H^{n,R,\mu}_{\m D;f(\bar x), f(y)}({f(\gamma)})- \mc H^{n,R,\mu}_{\m D;\bar x, y}(\gamma)=& \mc B(\gamma, \m D\backslash A; \m D)- \mc B(f(\gamma), \m D\backslash f(A); \m D)\\
        &+\frac{1}{4}\sum_{j=1}^n\log \abs{f'(x_j)} +\frac{n^2-4-\mu^2}{24}\log \abs{f'(0)}.
    \end{align*}
\end{proof}
Using Lemma~\ref{conformal deformation of multi-radial Loewner potentail}, we have the following theorem.
\begin{thm}\label{OM for multiple radial}
    Let $\kappa \leq 4$, $n\geq 2$ be an integer and $Q_{\m D; x, y}^{n,\kappa,\mu}$ denote the multi-radial $\SLE_\k$ measure with spiraling rate $\mu$ in $(\m D; \bar x, y)$. For any analytic simple multi-arcs $\gamma$ connecting $\bar x$ and $y=0$ such that $\gamma=f(\gamma^{0,\mu}_n)$ for some conformal map $f$ defined on some $A=A_{\epsilon_0}$ for some $\epsilon_0$ and define a collection of admissible neighborhoods $(O_\epsilon(\gamma))_{0 < \epsilon \ll 1}$  as above, we have that
    \begin{align}
        \lim_{\epsilon \rightarrow 0} \frac{Q_{\m D; \bar x, y}^{n,\kappa,\mu}(O_\epsilon(\gamma))}{Q_{\m D; \bar x^0,y}^{n,\kappa,\mu}(O_\epsilon(\gamma^{0,\mu}_n))}=\exp \left( \frac{c(\kappa)}{2}(\mc H^{n,R,\mu}_{D;\bar x,y}(\gamma)-\mc H^{n,R,\mu}_{D;\bar x^{0},y}(\gamma^0_n))+F^{n,\k}(\g)\right),
    \end{align}
 with
 \begin{equation*}
     F^{n,\k}(\g)=-\frac{3(6-\kappa)}{16}\sum_{j=1}^\infty\log\abs{f'(x^0_j)}+\left(\frac{(3\k-26)(n^2-1-\mu^2)}{96}-\frac{6-\k}{32}\right)\log\abs{f'(0)}.
 \end{equation*}
 \end{thm}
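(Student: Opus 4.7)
The plan is to replicate, in the multi-radial setting, the three-step template already used in Theorems~\ref{OM for single chordal}, \ref{OM for multiple chordal}, \ref{OM for single radial}, \ref{OM for chordal variants} and \ref{OM for radial variants}. Lemma~\ref{lem property of multiple radial} provides the generalized conformal restriction and the scaling rule stated above it provides conformal covariance, while Lemma~\ref{conformal deformation of multi-radial Loewner potentail} encapsulates the only genuinely new analytic content; what remains is bookkeeping.

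Concretely, starting from $Q^{n,\kappa,\mu}_{\m D;\bar x,y}(O_\epsilon(\gamma))=\int \mathbbm{1}_{\{\tilde\eta\subset \tilde A_\epsilon\}}\,dQ^{n,\kappa,\mu}_{\m D;\bar x,y}(\tilde\eta)$, I would first apply the conformal restriction of Lemma~\ref{lem property of multiple radial} to reweight the integral against $Q^{n,\kappa,\mu}_{\tilde A;\bar x,y}$, then use the multi-radial conformal covariance rule along $f$ to pull back to $Q^{n,\kappa,\mu}_{A;\bar x^0,0}$ (producing the factor $\prod_{j=1}^n |f'(x^0_j)|^{-b(\kappa)}\,|f'(0)|^{-(\tilde b(\kappa)+\tilde b_n(\kappa,\mu))}$), and finally use conformal restriction once more to present the answer against $Q^{n,\kappa,\mu}_{\m D;\bar x^0,0}$. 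After taking the ratio with $Q^{n,\kappa,\mu}_{\m D;\bar x^0,0}(O_\epsilon(\gamma^{0,\mu}_n))$, the integrand becomes the exponential of $\frac{c(\kappa)}{2}\bigl(\mc B(\eta,\m D\backslash A;\m D)-\mc B(f(\eta),\m D\backslash \tilde A;\m D)\bigr)$ times the same boundary/interior $f'$-factors. Invoking Lemma~\ref{3 uniform convergence} then lets $\epsilon\to 0$ pass through: the Brownian loop difference converges uniformly to its deterministic value at $(\gamma^{0,\mu}_n,\gamma)$, so the ratio converges to the exponential of $\frac{c(\kappa)}{2}\bigl(\mc B(\gamma^{0,\mu}_n,\m D\backslash A;\m D)-\mc B(\gamma,\m D\backslash \tilde A;\m D)\bigr)-b(\kappa)\sum_j\log|f'(x^0_j)|-(\tilde b(\kappa)+\tilde b_n(\kappa,\mu))\log|f'(0)|$.

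The last step is to substitute Lemma~\ref{conformal deformation of multi-radial Loewner potentail} to replace this Brownian loop difference by $\mc H^{n,R,\mu}_{\m D;\bar x,0}(\gamma)-\mc H^{n,R,\mu}_{\m D;\bar x^0,0}(\gamma^{0,\mu}_n)-\frac{1}{4}\sum_j\log|f'(x^0_j)|-\frac{n^2-4-\mu^2}{24}\log|f'(0)|$, and then collect log-derivative terms. The boundary coefficient is $-\frac{c(\kappa)}{8}-b(\kappa)$, which using $c(\kappa)=(6-\kappa)(3\kappa-8)/(2\kappa)$ and $b(\kappa)=(6-\kappa)/(2\kappa)$ simplifies to $-\frac{3(6-\kappa)}{16}$, matching the claim. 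The interior coefficient $-\frac{c(\kappa)(n^2-4-\mu^2)}{48}-\tilde b(\kappa)-\tilde b_n(\kappa,\mu)$ is more delicate: the $\kappa^{-1}$ pieces from $\tilde b(\kappa)$ and $\tilde b_n(\kappa,\mu)$ must cancel against those produced by expanding $c(\kappa)$, after which a clean regrouping converts $(n^2-4-\mu^2)$ into $(n^2-1-\mu^2)$ and produces the stated $\frac{(3\kappa-26)(n^2-1-\mu^2)}{96}-\frac{6-\kappa}{32}$. I expect this final algebraic consolidation to be the only real obstacle, and it is a finite verification; everything else is a transcription of the earlier template with the multi-radial restriction, covariance, and deformation inputs plugged in.
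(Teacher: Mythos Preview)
Your proposal is correct and follows exactly the template the paper uses throughout (and which is spelled out once in the proof of Theorem~\ref{main theorem}): apply Lemma~\ref{lem property of multiple radial} twice together with the multi-radial covariance rule, pass to the limit via Lemma~\ref{3 uniform convergence}, and then substitute Lemma~\ref{conformal deformation of multi-radial Loewner potentail}. The paper itself gives no separate proof of this theorem beyond the sentence ``Using Lemma~\ref{conformal deformation of multi-radial Loewner potentail}, we have the following theorem,'' so your explicit algebraic check of the boundary and interior coefficients in fact supplies more detail than the paper does.
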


%\newpage
\bibliographystyle{plain}%alpha
\bibliography{ref}

\end{document}